\newcommand{\8}{\infty}
\newcommand{\Z}{\mathbb{Z}}
\newcommand{\NN}{\mathbb{N}}
\newcommand{\id}{\mathbf{1}}
\newcommand{\idrm}{\mathrm{id}}
\newcommand{\kbf}{\mathcal{k}}
\newcommand{\GSpin}{\operatorname{GSpin}}
\newcommand{\Tbf}{\operatorname{T}}
\newcommand{\Qbar}{\overline{\Q}}
\newcommand{\diag}{\operatorname{diag}}
\newcommand{\cycle}{\operatorname{cycle}}
\newcommand{\Span}{\operatorname{span}}
\newcommand{\SO}{\operatorname{SO}}
\newcommand{\SL}{\operatorname{SL}}
\newcommand{\Sp}{\operatorname{Sp}}
\newcommand{\KM}{\operatorname{KM}}
\newcommand{\GL}{\operatorname{GL}}
\newcommand{\gtil}{\tilde{g}}
\newcommand{\Ktil}{\widetilde{K}}
\newcommand{\Ttil}{\widetilde{\operatorname{T}}}
\newcommand{\D}{\mathcal{D}}
\newcommand{\HH}{\mathscr{H}}
\newcommand{\R}{\mathbb{R}}
\newcommand{\A}{\mathbb{A}}
\newcommand{\Q}{\mathbb{Q}}
\newcommand{\C}{\mathbb{C}}
\newcommand{\Zhat}{\widehat{\Z}}
\newcommand{\Lhat}{\widehat{L}}
\newcommand{\End}{\operatorname{End}}
\newcommand{\vol}{\operatorname{vol}}
\newcommand{\Hom}{\operatorname{Hom}}
\newcommand{\Tr}{\operatorname{Tr}}
\newcommand{\Res}{\operatorname{Res}}
\newcommand{\N}{\operatorname{N}}
\newcommand{\sgn}{\operatorname{sgn}}
\newcommand{\Ocal}{\mathcal{O}}
\newcommand{\Scal}{\mathcal{S}}
\newcommand{\Ucal}{\mathcal{U}}
\newcommand{\Vcal}{\mathcal{V}}
\newcommand{\Wcal}{\mathcal{W}}
\newcommand{\Ccal}{\mathcal{C}}
\newcommand{\Bcal}{\mathcal{B}}
\newcommand{\Zcal}{\mathcal{Z}}
\newcommand{\Qcal}{\mathcal{Q}}
\newcommand{\Ecal}{\mathcal{E}}
\newcommand{\proj}{\textrm{proj}}
\newcommand{\re}{\operatorname{Re}}
\newcommand{\Mat}{\operatorname{Mat}}
\newcommand{\im}{\operatorname{Im}}
\newcommand{\fin}{f}
\newcommand{\hooklongrightarrow}{\lhook\joinrel\longrightarrow}
\newcommand\restr[2]{{
  \left.\kern-\nulldelimiterspace 
  #1 
  \vphantom{\big|} 
  \right|_{#2} 
  }}
\newcommand{\thetil}{\widetilde{\Theta}}
\newcommand{\Gal}{\textrm{Gal}}
\newtheoremstyle{mytheoremstyle} 
    {1.5em}                    
    {1.em}                    
    {\itshape}                   
    {}                           
    {\normalsize \bfseries}                   
    {.}                          
    {0,5em}                       
    {}  
\theoremstyle{mytheoremstyle}
\newtheorem{thm}{Theorem}[section]
\newtheorem*{thm*}{Theorem}
\newtheorem{cor}{Corollary}[thm]
\newtheorem*{cor*}{Corollary}
\newtheorem{lem}[thm]{Lemma}
\newtheorem{prop}[thm]{Proposition}
\newtheorem*{que*}{Question}
\theoremstyle{remark}
\newtheorem{rmk}{Remark}[section]
\numberwithin{equation}{section}
\titleformat{\subsection}[runin]
       {\normalfont\bfseries}
       {\thesubsection.}
       {0.5em}
       {}
       [.]
\titleformat{\subsubsection}[runin]
       {\normalfont\bfseries}
       {\thesubsubsection.}
       {0pt}
       {}
       []
\title{\large Kudla Millson lift of toric cycles and restriction of Hilbert modular forms.}
\author{\normalsize Romain Branchereau\footnote{ McGill, Burnside Hall
805 Sherbrooke Street West
Montreal, Quebec H3A 0B9 \\ Email: \url{branchereauromain.math@gmail.com}}}
\date{} 
\begin{document}
\maketitle
\abstract{Let $V$ be quadratic space of even dimension and of signature $(p,q)$ with $p \geq q>0$. We show that the Kudla-Millson lift of {\em toric} cycles - attached to algebraic tori - is a cusp form that is the diagonal restriction of a Hilbert modular form of parallel weight one. We deduce a formula relating the dimension of the span of such diagonal restrictions and the dimension of the span of toric and special cycles.
}
{
  \tableofcontents
}

\section{Introduction}
\subsection{Intersection numbers of geodesics on modular curves} \label{motivation}
Let $Y_0(p)=\Gamma_0(p) \backslash \HH$ be the open modular curve for some prime $p$. Let $\gamma_\8$ be the image in $Y_0(p)$ of the geodesic in $\HH$ from $0$ to $\8$. It defines a relative cycle 
\begin{align}
    \gamma_\8 \in \Zcal_1(Y_0(p),\partial Y_0(p),\Z).
\end{align} 
On the other hand, one can attach compact geodesics to a real quadratic field $\Q(\sqrt{D})$. Every ideal class $I$ in the narrow class group $\Ccal^+_D$ defines a closed and oriented geodesic $\gamma_{I}$ in the modular curve $Y_0(p)$. After taking linear combinations and twisting by an odd character $\psi \colon \Ccal^+_D \longrightarrow \C^\times$ we get a cycle
\begin{align}
    \gamma_{\psi}=\sum_{I \in \Ccal^+_D} \psi(I) \gamma_I \in \Zcal_1(Y_0(p)).
\end{align}
There is a natural action of the Hecke operators on these geodesics by acting on the endpoints in $\HH$, which gives an element $T_n\gamma_{\psi}\in \Zcal_1(Y_0(p))$. Moreover, we have a pairing in homology
\begin{align}
    \langle - , - \rangle \colon H_1(Y_0(p),\Z) \times H_1(Y_0(p),\partial Y_0(p),\Z) \longrightarrow \Z.
\end{align}
If two geodesics $\gamma_1$ and $\gamma_2$ in $Y_0(p)$ intersect transversely and in a compact set then $\langle \gamma_1,\gamma_2 \rangle = \sum_{z \in \gamma_1 \cap \gamma_2} \pm 1$ is the topological intersection number, where $\pm 1$ depends on the local orientation at the intersection point.  Darmon-Pozzi-Vonk prove the following in \cite[Theorem.~A]{DPV}.
\begin{thm*}[Darmon-Pozzi-Vonk]
If $p$ splits in $\Q(\sqrt{D})$, the modular form \begin{align}\label{dpveq}
    \Theta_{\gamma_{\8} \otimes \gamma_{\psi}}(\tau)=L_p(\psi,0)-2 \sum_{n =1}^\8 \langle \gamma_{\8},T_n\gamma_{\psi} \rangle q^n
\end{align} of weight $2$ and level $\Gamma_0(p)$ is the diagonal restriction of a $p$-stabilized Hilbert-Eisenstein series $E^{(p)}(\psi,\tau,\tau)$ for a subgroup $\SL_2(\Q(\sqrt{D}))$.
\end{thm*}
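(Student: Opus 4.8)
The plan is to prove the identity as an equality of Fourier expansions of weight-two modular forms on $\Gamma_0(p)$. Both sides are modular \emph{a priori}: $E^{(p)}(\psi,\tau,\tau)$ is by construction the diagonal restriction of the ordinary $p$-stabilization of the parallel weight-one Hilbert Eisenstein series attached to the odd narrow ray class character $\psi$ of $F=\Q(\sqrt D)$, while $\Theta_{\gamma_\8\otimes\gamma_\psi}$ is a Shintani/Kudla--Millson-type theta lift of the cycle $\gamma_\8\otimes\gamma_\psi$, so the formula for its coefficients in terms of the intersection numbers $\langle\gamma_\8,T_n\gamma_\psi\rangle$ is the standard Hecke-equivariance of such a lift. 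It therefore suffices to compute both $q$-expansions and compare, the constant term included; note that the split hypothesis at $p$ is exactly what makes the RM geodesics $\gamma_I$ well-defined on $Y_0(p)$ and fixes the level of $E^{(p)}(\psi,\tau,\tau)$ to be $\Gamma_0(p)$.

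First I would unwind the Fourier expansion of the diagonal restriction of $E^{(p)}(\psi)$. With $\dfrak$ the different of $F$, the parallel weight-one Hilbert Eisenstein series has the shape $E_1(\psi)(\tau_1,\tau_2)=c_0+\sum_{\mu\gg0,\,\mu\in\dfrak^{-1}}\Big(\sum_{\mathfrak a\mid(\mu)\dfrak}\psi(\mathfrak a)\Big)e^{2\pi i(\mu_1\tau_1+\mu_2\tau_2)}$, so its diagonal restriction has $n$-th coefficient $\sum_{\mu\gg0,\,\Tr_{F/\Q}(\mu)=n}\ \sum_{\mathfrak a\mid(\mu)\dfrak}\psi(\mathfrak a)$. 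Passing to the ordinary $p$-stabilization at $p=\p\bar\p$ restricts these inner sums to ideals prime to $p$ and replaces $c_0$ by $\prod_{\mathfrak l\mid p}(1-\psi(\mathfrak l))$ times the relevant partial $L$-value, namely by $L_p(\psi,0)$. This yields a $q$-expansion $L_p(\psi,0)+\sum_{n\geq1}b_n(\psi)\,q^n$ with $b_n(\psi)$ an explicit finite twisted divisor sum over $F$.

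Second, I would compute $\langle\gamma_\8,T_n\gamma_\psi\rangle$ geometrically and match. Writing $T_n\gamma_\psi=\sum_I\psi(I)\,T_n\gamma_I$ and realizing each $\gamma_I$ as the $\Gamma_0(p)$-orbit of geodesics cut out by binary quadratic forms of discriminant $D$ carrying the $\Gamma_0(p)$-structure provided by the split prime, the translate $T_n\gamma_I$ becomes a sum over forms $[a,b,c]$ of discriminant $n^2D$ arising as $M$-transforms with $\det M=n$; the imaginary axis $\gamma_\8=\{0,\infty\}$ crosses the semicircle of $[a,b,c]$ transversely precisely when $ac<0$, contributing the orientation sign $\sgn(a)$. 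Thus $\langle\gamma_\8,T_n\gamma_\psi\rangle$ is a $\sgn(a)$-weighted, $ac<0$-restricted sum of $\psi$ on the ideal class of $[a,b,c]$, and the heart of the proof is a Hecke/Siegel-type summation identity: via Gauss composition and optimal embeddings of the order of discriminant $n^2D$, this sum is identified with $-\tfrac12 b_n(\psi)$ — the factor $-2$ and the signs reflecting the oddness of $\psi$ and the relative, modular-symbol nature of $\gamma_\8$.

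I expect this arithmetic matching identity, together with the identification of the constant term with the \emph{$p$-adic} $L$-value $L_p(\psi,0)$ rather than the complex $L(\psi,0)$, to be the main obstacle; the latter is exactly where the $p$-stabilization and the split hypothesis are indispensable, and it is most delicate in the exceptional-zero case $\psi(\p)=1$. A structurally cleaner alternative is the $p$-adic deformation argument: embed $E_1(\psi)$ in the Hida family of Hilbert Eisenstein series, take diagonal restrictions to get a Hida family of elliptic modular forms, pin down its Eisenstein component by matching the constant-term family with the Deligne--Ribet $p$-adic $L$-function, and identify the cuspidal part of the weight-two specialization with $\Theta_{\gamma_\8\otimes\gamma_\psi}$ by comparing Hecke eigenvalue systems and using the winding-element interpretation of $\gamma_\8$ together with the action of $T_n$ on the endpoints of the geodesics $\gamma_I$; there the obstacle shifts to controlling the $p$-adic interpolation of the intersection numbers. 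One may also see the whole statement as a special case of a general identification of theta lifts of toric cycles with diagonal restrictions of Hilbert modular forms of parallel weight one — the route of the present paper.
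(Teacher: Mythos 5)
This statement is not proved in the paper at all: it is quoted from \cite{DPV} as motivation, and the paper's own route to it (carried out in \cite{rbrforum} and generalized here) is the theta--seesaw argument, in which the left column of the seesaw $(\SO(d,d),F^\times)$ produces the generating series of intersection numbers via the Thom-form property of the Kudla--Millson form, and the right column $(\SL_2(F),\SL_2(\Q))$ identifies the lift with a diagonally restricted Hilbert--Eisenstein series via Siegel--Weil. Your primary route --- expanding the diagonally restricted $p$-stabilized Eisenstein series into twisted divisor sums over totally positive trace-$n$ elements, and separately computing $\langle\gamma_\8,T_n\gamma_\psi\rangle$ as a $\sgn(a)$-weighted sum over forms $[a,b,c]$ of discriminant $n^2D$ with $ac<0$ --- is essentially Darmon--Pozzi--Vonk's original argument, not the paper's. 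Each approach has its advantages: the direct comparison yields the exact constant term $L_p(\psi,0)$, the factor $-2$, and handles the $p$-stabilization and exceptional-zero phenomena concretely; the seesaw approach explains structurally \emph{why} the generating series is a diagonal restriction and generalizes to arbitrary signature and arbitrary maximal tori (the subject of this paper), at the cost of having to regularize the theta integral over the non-compact cycle $\gamma_\8$ --- a point your sketch glosses over when it invokes ``standard Hecke-equivariance of such a lift'' to get modularity a priori. Two caveats on your write-up: the central ``Hecke/Siegel-type summation identity'' equating the weighted form count with $-\tfrac12 b_n(\psi)$ is asserted rather than proved, and it is genuinely the hard combinatorial core of the DPV proof (Gauss composition, optimal embeddings into the order of discriminant $n^2D$, and a careful orientation/sign analysis); and $L_p(\psi,0)$ in the statement is the classical $L$-value with Euler factors at $p$ removed (a partial Hecke $L$-value, consistent with the paper's remark that constant terms of lifts of isotropic toric cycles are partial Hecke $L$-functions), not literally the Deligne--Ribet $p$-adic $L$-function, though the two agree under interpolation.
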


In \cite{rbrforum}, we showed how to recover (and generalize) the theorem of Darmon-Pozzi-Vonk by using the Kudla-Millson lift. The idea was to consider the embedding $F^\times \subset \SO(d,d)$ of a totally real field $F$ of degree $d$, which gives a non-compact cycle on the locally symmetric space of $\SO(d,d)$. The (regularized) integral of the Kudla-Millson theta lift over { this cycle} gives a generating series of intersection numbers generalizing the right handside of \eqref{dpveq}. On the other hand, the relation to a Hilbert-Eisenstein series follows from a Siegel-Weil formula and the seesaw
\[
\begin{tikzcd}
\SO(d,d) \arrow[dash, dr] & \SL_2(F) \\
F^\times \arrow[dash]{u} \arrow[dash, ru] & \SL_2(\Q). \arrow[dash]{u}
\end{tikzcd}
\]
In the case where $F$ is a real quadratic field, this { yields the theorem} of Darmon-Pozzi-Vonk above.
\begin{rmk} Another closely related seesaw appears in \cite{bcg}
    \[
\begin{tikzcd}
\GL_d(\Q) \arrow[dash, dr] & \GL_2(F) \\
F^\times \arrow[dash]{u} \arrow[dash, ru] & \GL_2(\Q), \arrow[dash]{u}
\end{tikzcd}
\]
where the right hand side is the diagonal restriction of (the same) Hilbert-Eisenstein series, and the left hand side is the evaluation of an Eisenstein class on the same cycle as above. The two seesaws are related by an embedding $\GL_d(\Q)$ in $\SO(d,d)$, and the Bergeron-Charollois-Garcia lift is closely related to the Kudla-Millson lift via the Mathai-Quillen form. We hope to explain this in more detail in future work. 
\end{rmk}
\subsection{Main result} In this paper, we replace the torus $F^\times \subset \SO(d,d)$ by a more general anisotropic maximal $\Q$-torus, of maximal $\R$-rank in an orthogonal group $\SO(p,q)$ of signature $(p,q)$ with $p \geq q >0$. We restrict ourselves to anisotropic tori, to avoid having to deal with the regularization of the integral as in \cite{rbrforum}. However, one can combine the result presented here with the result of {\em loc. cit.} to consider any maximal $\Q$-torus of maximal $\R$-rank, not necessarily anisotropic. 
\begin{rmk} The study of cycles attached to algebraic tori in orthogonal groups is also motivated by the recent work of Darmon-Gehrmann-Linowski. In \cite{dgl}, the authors define rigid meromorphic cocycles in arbitrary signature, generalizing the rigid meromorphic cocycles in signature $(2,2)$ studied in \cite{DPV}. They conjecture that evaluated on toric cycles, these cocycles should be algebraic. The seesaws presented here could be relevant.
\end{rmk}

 Let $(V,Q)$ be a non-degenerate rational quadratic space of dimension $2d$ and signature $(p,q)$, where we suppose that $p \geq q>0$. Let $\SO_V \subset \GL_{2d}$ be the orthogonal group of $V$ and let $\SO_V(\R)^+\simeq \SO(p,q)^+$ be the connected component containing the identity. Let $K_\8$ be a maximal compact subgroup $K_\8 \simeq \SO(p)\times \SO(q) \subset \SO_V(\R)^+$ and $\D^+ \simeq \SO_V(\R)^+/K_\8$ its associated symmetric space, which is a $pq$-dimensional Riemannian manifold.

{ Let $\varphi_\fin \in \Scal(V_{ \A_\fin})$ be a finite Schwartz function preserved by an open compact subgroup $K_\fin \subset \SO_V(\A_\fin)$.} We set $K=K_\8K_\fin$ and consider the adelic space 
\begin{align}
    Y \coloneqq \SO_V(\Q) \backslash \SO_V(\A)/K,
\end{align} which is a finite union of locally symmetric space $\Gamma \backslash \D^+$ for some congruence subgroup of $\Gamma \subset \SO_V(\Q)^+$. 
 
 In \cite{km2,km3}, Kudla and Millson construct an element
\begin{align}
\Theta^V_\varphi \in H^q(Y) \otimes M_{d} (\Gamma'_\Q)   
\end{align}
that realizes a lift from the homology to the space of modular forms of weight $d=\frac{p+q}{2}$ for $\Gamma'_\Q \subset \SL_2(\Q)$ depending on the choice of the Schwartz function. As for the modular curve, there is a homological pairing 
\begin{align}
    \langle - , - \rangle \colon H_q(Y,\Z) \times { H_{pq-q}(Y, \partial Y, \Z)} \longrightarrow \Z.
\end{align}
If the homology classes are represented by two smooth immersed submanifolds that intersect transversely and in a compact set, then the intersection number $\langle C_1,C_2 \rangle$ is the signed intersection number, as for geodesics. The main feature of the Kudla-Millson lift is that for a cycle $C$ in $H_q(Y)$ it has the Fourier expansion
\begin{align}
\Theta^V_\varphi(\tau,C) \coloneqq \int_C\Theta^V_\varphi(\tau)=c^0_\varphi(C)+ \sum_{n \in \Q_{>0}} \langle C,C_n(\varphi) \rangle q^n,
\end{align}
where the cycles $C_n(\varphi) \in \Zcal_{pq-q}(Y,\partial Y,\Z)$ are {\em special cycles} coming from embeddings $\SO(p-1,q) \hookrightarrow \SO(p,q)$. 

In this paper, we will evaluate the Kudla-Millson form on cycles attached to algebraic tori. Let $\Tbf \subset \SO_V$ be an anisotropic algebraic $\Q$-torus, maximal and of maximal $\R$-rank. Since we assumed that $p \geq q$, the orthogonal group $\SO_V$ is of real rank $q$. Hence, the (maximal) real rank of  $\Tbf$ is $q$. Let 
$$\chi = (\chi_\8,\chi_\fin) \colon \Tbf(\Q) \backslash \Tbf(\A) \longrightarrow \C^\times$$ be a character of finite order. The archimedean part $\chi_\8$ is a character on $\Tbf(\R)\simeq (\R^\times)^q \times (S^1)^{\frac{p-q}{2}}$. To avoid trivial cancellations of the integral, we need to make the assumption that the character is odd, {\em i.e.} it is the sign function at every real place, and trivial at every complex place. Let $K_T=K_{T,\8}K_{T,\fin} \subset \Tbf(\A)$ be such that $K_{T,\8} = K_\8 \cap \Tbf(\R)$ is maximal compact, and that $K_{T,\fin} \subset K_{\fin}$. Let $\Ccal_T$ be the finite group of double cosets
\begin{align}
    \Ccal_T\coloneqq\Tbf(\Q)^+\backslash \Tbf(\A_\fin)/K_{T,\fin}.
\end{align} We define
\begin{align}
    Y_T \coloneqq \Tbf(\Q)\backslash \Tbf(\A)/K_T=\bigsqcup_{I \in \Ccal_{T}} \Lambda \backslash \R_{>0}^q
\end{align}
where $\Lambda \coloneqq \Tbf(\Q)^+ \cap K_{T,\fin}$. The quotient is compact since $\Tbf$ is anisotropic. The embedding of $\Tbf(\A)$ in $\SO_V(\A)$ induces a map $Y_T \longrightarrow Y$, and the image of the connected component $\Lambda \backslash \R_{>0}^q$ associated to $I \in \Ccal_T$ defines a cycle $C_{T,I} \in \Zcal_q(Y)$.
Let us now suppose that $\chi_\fin$ is trivial on $K_{T,\fin}$. We can see the finite part $\chi_\fin$ as a function on $\chi_\fin \colon \Ccal_{T} \longrightarrow \C^\times$ and define the cycle
\begin{align}
    C_{\chi}=\sum_{I \in \Ccal_{T}} \chi_{\fin}(I)C_{T,I} \in \Zcal_q(Y).
\end{align}
By the work of Kudla-Millson, it is known that if $q$ is odd, then $\Theta^V_\varphi(\tau,C)$ is a cusp form for any cycle $C$; see \cite[Theorem.~2]{KMIHES}. We show that when we restrict to the cycles $C_\chi$, then the lift is always a cusp form.

\begin{thm} Let $V$ be an even dimensional quadratic space of signature $(p,q)$ with $p \geq q >0$ and let $C_\chi$ be the cycle attached to an anisotropic maximal $\Q$-torus of maximal real rank $q$. Then $\Theta^V_\varphi(\tau,C_\chi) \in S_d(\Gamma'_\Q)$ is a cusp form of weight $d$ for  $\Gamma'_\Q \subset \SL_2(\Q)$.
\end{thm}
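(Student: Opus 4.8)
The plan is to show that the constant term $c^0_\varphi(C_\chi)$ of the Fourier expansion vanishes; the statement then follows since $\Theta_\varphi(\tau, C_\chi)$ is already known to be a holomorphic modular form of weight $d$ and level $\Gamma_0(N)$. The constant term of the Kudla-Millson theta lift records the ``volume'' pairing of the cycle $C_\chi$ with the archimedean Schwartz form $\varphi_{KM}$ evaluated at the zero vector, or equivalently the value of the weight-$d$ Eisenstein-type behavior at the cusps; concretely, up to normalization it is the integral over $C_\chi$ of the $\SO_V(\R)^+$-invariant differential $q$-form $\varphi_{KM}(0)$ on $\D^+$. So the crux is to show $\int_{C_\chi}\varphi_{KM}(0)=0$.

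First I would unwind $C_\chi = \sum_{I\in\Ccal_T}\chi_{\fin}(I)\,C_{T,I}$ and note that each connected component $C_{T,I}$ is (the image of) a single orbit $\Lambda\backslash\R_{>0}^q$ of the torus. The key structural input is the oddness of $\chi_\infty$: at every real place $\chi_\infty$ is the sign character, and trivial at complex places. I would exploit the element $\varepsilon \in \Tbf(\Q)$ (or an element of the component group of $\Tbf(\R)$) that acts by $-1$ on each of the $q$ hyperbolic planes of the torus decomposition $\Tbf(\R)\simeq (\R^\times)^q\times(S^1)^{(p-q)/2}$. This $\varepsilon$ preserves the cycle $Y_T\to Y$ but reverses its orientation an odd number of times — precisely, it acts on the tangent $q$-form by $(-1)^q$ or, more to the point, it fixes $\varphi_{KM}(0)$ as a form on $\D^+$ (since $\varphi_{KM}(0)$ is $\SO_V(\R)^+$-invariant) while sending the oriented cycle $C_{T,I}$ to $\chi_\fin(I)^{-1}$ times an oriented cycle whose contribution cancels. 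I would make this precise by showing that conjugation/translation by $\varepsilon$ permutes the components $\{C_{T,I}\}$ compatibly with $\chi_\fin$, so that in the sum defining $C_\chi$ the oddness forces $\int_{C_\chi}\varphi_{KM}(0) = -\int_{C_\chi}\varphi_{KM}(0)$.

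More carefully: the archimedean Kudla-Millson form $\varphi_{KM}(0)$ at the origin is a specific element of $\bigl(\textstyle\bigwedge^q \p^*\bigr)^{K_\infty}$, and the torus $\Tbf(\R)^+\simeq\R_{>0}^q$ spans a $q$-dimensional subspace of $\D^+$ on which $\varphi_{KM}(0)$ restricts to a translation-invariant top form. The element $\varepsilon$ acts on this $q$-plane by $x\mapsto -x$ in suitable coordinates, hence by $(-1)^q$ on the volume form; but it also acts on $\varphi_{KM}(0)$, and one checks using the explicit formula for $\varphi_{KM}$ (a polynomial times Gaussian, with the polynomial of the Howe degree that makes it transform in the right $K_\infty$-type) that $\varepsilon^*\varphi_{KM}(0) = \varphi_{KM}(0)$. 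Combining, $\int_{\varepsilon\cdot C_{T,I}}\varphi_{KM}(0) = (-1)^q\int_{C_{T,I}}\varphi_{KM}(0)$. Because $\chi$ is a genuine character with $\chi_\infty$ odd and $\chi_\fin$ trivial on $K_{T,\fin}$, the global product formula $\chi_\infty(\varepsilon)\chi_\fin(\varepsilon)=1$ together with $\chi_\infty(\varepsilon) = (-1)^q$ (sign at each of the $q$ real places) forces $\chi_\fin(\varepsilon) = (-1)^q$; feeding this into the double-coset action on $\Ccal_T$ shows the sum $\sum_I\chi_\fin(I)\int_{C_{T,I}}\varphi_{KM}(0)$ equals its own negative, hence vanishes.

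The main obstacle I anticipate is bookkeeping the orientations and the precise action of $\varepsilon$ on the double-coset set $\Ccal_T$ and on the invariant form $\varphi_{KM}(0)$ simultaneously — one must check that the sign picked up from the orientation reversal of the cycle and the sign $\chi_\infty(\varepsilon)$ from the character genuinely reinforce (rather than cancel) to give the vanishing, and this requires being careful that $\varphi_{KM}(0)$ is $\varepsilon$-invariant and not $\varepsilon$-anti-invariant. A secondary point is to confirm that $c^0_\varphi(C_\chi)$ is indeed captured by $\int_{C_\chi}\varphi_{KM}(0)$ up to a nonzero constant — this is standard for compact cycles (the torus being anisotropic, $C_\chi$ is compact, so there are no boundary contributions and no regularization needed), but it should be stated cleanly. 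Once the invariance of $\varphi_{KM}(0)$ under $\varepsilon$ and the sign $\chi_\fin(\varepsilon)=(-1)^q$ are both in hand, the vanishing of the constant term — and hence the cusp form property — is immediate.
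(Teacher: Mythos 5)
Your overall strategy---reduce cuspidality to the vanishing of the constant term of the $q$-expansion---is the right starting point, but the proposal has two genuine gaps, and the mechanism you propose for the vanishing is not the one that actually operates here.

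First, the constant term of $\Theta_\varphi(\tau,C_\chi)$ is \emph{not} just $\int_{C_\chi}\varphi_{\KM}(0)$: it is $\int_{C_\chi}\Theta^{(0)}_\varphi(\tau)$, where $\Theta^{(0)}_\varphi(\tau)=\sum_{v\in L,\,Q(v,v)=0}\varphi^0(\sqrt{y}v)$ runs over \emph{all} isotropic lattice vectors. Anisotropy of the torus $\Tbf\simeq E^1$ only says that $E/F$ has no split factor, i.e.\ $\N_{E/F}(v)=0\Rightarrow v=0$; it does not make $(V,Q)$ anisotropic over $\Q$ (indeed, by Meyer's theorem any indefinite $V$ with $\dim V\geq 5$ is isotropic, and the paper's own signature $(2,2)$ example $V=\Mat_2(\Q)$ is split). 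So there are in general infinitely many nonzero $v$ with $Q(v,v)=\Tr_{F/\Q}(\alpha\N_{E/F}(v))=0$ but $\N_{E/F}(v)\neq 0$, and your proposal never addresses their contribution. The paper handles them by computing the toric period of $\varphi_{\KM}(v)$ explicitly (a $K_{1/2}$-Bessel integral at each place of $S_1$), showing it is supported on $v$ with $\N_{E/F}(v)$ totally positive, which forces $Q(v,v)>0$; hence no nonzero null vector survives the integration over $C_\chi$.

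Second, the sign-cancellation argument for the remaining $v=0$ term does not close, for the reason you yourself flag: the two signs you extract are both $(-1)^q$ (orientation reversal of the $q$-cycle under $\varepsilon$, and $\chi_\fin(\varepsilon)=\chi_\8(\varepsilon)^{-1}=(-1)^q$), so they multiply to $+1$ and give no cancellation. In fact no cancellation is needed, because your premise that $\varphi_{\KM}(0)$ restricts to a nonzero invariant volume form on the toric orbit is false: the functoriality $\restr{\varphi_{\KM}^V}{\D_T^+}=\varphi_{\KM}^E=\prod_{\sigma\in S_1}\varphi_{\KM}^\sigma$ expresses the pullback as a product of signature-$(1,1)$ Kudla--Millson forms, each of which carries the polynomial factor $\bigl(v_\sigma t_\sigma^{-1}+t_\sigma v_\sigma'\bigr)$ and therefore vanishes identically at $v=0$. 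So each $\int_{C_{T,I}}\varphi_{\KM}(0)$ is zero term by term, with no character twist required. You have also reversed the role of the oddness of $\chi_\8$: it is not what kills the constant term, but what prevents the toric period of $\varphi_{\KM}(v)$ from vanishing for $v\neq 0$ (the local integral $\int_{\R^\times}e^{-\pi|\alpha_\sigma|((v_\sigma t^{-1})^2+(tv_\sigma')^2)}(v_\sigma t^{-1}+tv_\sigma')\,\chi_\sigma(t)\,\tfrac{dt}{t}$ is zero unless $\chi_\sigma=\sgn$, since the integrand without $\sgn(t)$ is odd under $t\mapsto -t$). The paper's proof is thus quite different: it derives the Fourier coefficient formula $c^{(m)}_\varphi(C_\chi)$ via the seesaw, observes that the archimedean factor $\prod_{\sigma\in S_1}\sgn(v_\sigma+v_\sigma')$ kills $v=0$ and that only totally positive $m=\N_{E/F}(v)$ contribute, and concludes by the anisotropy of $E/F$ (with a short product argument when $E$ is a product of field extensions).
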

\noindent  In the case of toric cycles $C_\chi$ attached to isotropic $\Q$-tori, as considered in \cite{rbrforum}, the lift is not a cusp form in general. In fact, for suitable lattice in $V$ the constant term of $\Theta_\varphi(\tau,C_\chi)$ is a partial Hecke $L$-function of $\chi$.
\subsubsection{} The crucial fact that we want to exploit in this paper, is that any even dimensional quadratic space can be obtained by restriction of scalars of an \'etale algebra with involution. More precisely, let $V$ be a quadratic space over $\Q$ as before. Let $\Tbf(\Q)$ a maximal $\Q$-torus in $\SO_V(\Q)$. Then, there exists an \'etale algebra $E$ of dimension $2d$ with involution $\epsilon$ and a $d$-dimensional subalgebra $F$ fixed by $\epsilon$, such that $(V,Q)\simeq \Res_{F/\Q}(E,Q_\alpha)$ where 
\begin{align}
    Q_\alpha(x,y)=\alpha (x \epsilon(y)+\epsilon(x)y)
\end{align}
for some $\alpha \in F^\times$. Moreover, we have $\Tbf(\Q)\simeq E^1$ where $E^1$ are the elements $x$ in $E$ of norm $x\epsilon(x)=1$.
The quadratic extension $E/F$ is a product of quadratic extension $E_i/F_i$ where $F_i$ is a field. If $E_i=F_i \times F_i$ is split, the involution permutes the two factors and $E_i^1 \simeq F_i^\times$. This is the case considered in \cite{rbrforum}. On the other hand, when $E_i/F_i$ is a field extension, the involution $\epsilon$ is the Galois involution $\Gal(E_i/F_i)$. This is the case we want to consider in this paper. The assumption that $\Tbf$ is $\Q$-anisotropic implies that none of the factors $E_i$ is split. On the other hand, the assumption that the real rank of $\Tbf$ is maximal implies that the fields $F_i$ are totally real.

Since the torus $\Tbf(\Q)$ is the restriction of scalars of $E^1$, we obtain the following.

\begin{thm}\label{diagrestthm}
    Let $\chi$ be a character on an $\Q$-anisotropic torus $\Tbf(\Q) \simeq E^1$ of maximal $\R$-rank. Suppose for simplicity that $E$ is a quadratic field extension of the totally real field $F$ of degree $d$. The generating series
    \begin{align} \label{equationmainthm}
     \Theta_\varphi(\tau,C_\chi)=\sum_{n \in \Q_{>0}} \langle C_{\chi} , C_n(\varphi) \rangle q^n=\Theta_\varphi(\tau,\cdots,\tau,\chi) \in S_d(\Gamma'_\Q)
    \end{align}
    is the diagonal restriction of a Hilbert modular form $\Theta_{\varphi}(\tau_1,\dots,\tau_d,\chi)$ of parallel weight one for some subgroup of $\SL_2(F)$.
\end{thm}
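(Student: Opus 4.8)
The plan is to evaluate $\Theta_\varphi(\tau,C_\chi)$ through the seesaw dual pair
\[
\begin{tikzcd}
\SO_V \arrow[dash, dr] & \Res_{F/\Q}\SL_{2,F} \\
\Tbf \arrow[dash]{u} \arrow[dash, ru] & \SL_{2,\Q}, \arrow[dash]{u}
\end{tikzcd}
\]
in which $\Tbf \simeq \Res_{F/\Q}\SO_E$ and which sits inside $\Sp\bigl(\Res_{F/\Q}(E\otimes_F F^2)\bigr) = \Sp(V\otimes_\Q\Q^2)$ once we use the isomorphism $(V,Q)\simeq\Res_{F/\Q}(E,Q_\alpha)$ and $\Tbf\simeq E^1 = \SO_E$ recalled above. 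First I would record the standard Kudla--Millson identity expressing the intersection generating series as an adelic theta integral over the torus: pulling $\varphi$ back along the immersion $Y_T\to Y$, using the dictionary $Y_T = \Tbf(\Q)\backslash\Tbf(\A)/K_T$, and integrating the component cycles against $\chi_\fin$ gives
\[
\Theta_\varphi(\tau,C_\chi) \;=\; \int_{\Tbf(\Q)\backslash\Tbf(\A)}\theta\bigl(g_\tau, t;\varphi'\bigr)\,\chi(t)\,dt,
\]
where $\varphi' = \varphi'_\8\otimes\id_{\Lhat}$ and $\varphi'_\8\in\Scal(V(\R))$ is obtained by integrating the Kudla--Millson form over the compact torus orbit $\Lambda\backslash\R_{>0}^q\subset\D^+$ against $\chi_\8$. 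Since $\Tbf$ is $\Q$-anisotropic the quotient $\Tbf(\Q)\backslash\Tbf(\A)$ is compact, so this integral converges absolutely and no regularization is needed --- this is the reason for the anisotropy hypothesis, as noted in the introduction.

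Next I would use that restriction of scalars commutes with the formation of Weil representations and theta kernels. Under $V(\A) = E\otimes_F\A_F$ the $\Q$-theta kernel $\theta(g,t;\varphi')$ for the pair $(\SO_V,\SL_{2,\Q})$, restricted to $t\in E^1(\A_F)$ and to $g = g_\tau\in\SL_2(\A)$ viewed as the diagonal element of $\SL_2(\A_F) = \Res_{F/\Q}\SL_{2,F}(\A)$, coincides with the $F$-theta kernel $\theta_F(g',t;\varphi')$ for the pair $(\SO_E,\Res_{F/\Q}\SL_{2,F})$ specialized at $g' = g_\tau$. Combined with $\Tbf(\Q)\backslash\Tbf(\A) = E^1(F)\backslash E^1(\A_F)$, this is exactly the seesaw identity
\[
\Theta_\varphi(\tau,C_\chi) \;=\; \Theta_\varphi(\tau,\dots,\tau,\chi)\big|_{\mathrm{diag}}, \qquad
\Theta_\varphi(\tau_1,\dots,\tau_d,\chi)\;\coloneqq\;\int_{E^1(F)\backslash E^1(\A_F)}\theta_F\bigl(g_{(\tau_1,\dots,\tau_d)},t;\varphi'\bigr)\chi(t)\,dt,
\]
so that $\Theta_\varphi(\tau_1,\dots,\tau_d,\chi)$, a priori an automorphic form on $\SL_2(\A_F)$, restricts on the diagonal $\tau_1=\dots=\tau_d=\tau$ to the Kudla--Millson generating series.

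It remains to show that $\Theta_\varphi(\tau_1,\dots,\tau_d,\chi)$ is a holomorphic Hilbert cusp form of parallel weight one, and this rests on the explicit shape of $\varphi'_\8$, which I expect to be the main obstacle. Under $V(\R) = \prod_{v\mid\8}(E\otimes_F F_v)$ the infinite place of $F$ breaks into the $q$ real places where $E_v/F_v$ splits --- contributing the $\SO(1,1)$-factors of $\SO(p,q)$ and the $\R^\times$-factors of $\Tbf(\R)$ --- and the $d-q$ real places where $E_v/F_v$ is a field --- contributing $\SO(2,0)$ or $\SO(0,2)$ and the $S^1$-factors of $\Tbf(\R)$. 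The Kudla--Millson $q$-form factors as the product of the degree-one Kudla--Millson form at each split place and the Gaussian ($0$-form) at each inert place; integrating the split pieces over $\R$ against the sign character and leaving the inert pieces untouched produces a Schwartz function on $\prod_{v\mid\8}(E\otimes_F F_v)$ which at every archimedean place is (a multiple of) the weight-one lowest-weight vector for the corresponding $\SL_2(F_v)$. This computation --- the generalization to arbitrary anisotropic tori of the rank-one evaluation carried out in \cite{rbrforum}, now including the compact $S^1$-places --- shows that $\Theta_\varphi(\tau_1,\dots,\tau_d,\chi)$ is holomorphic of parallel weight one; its level is the congruence subgroup of $\SL_2(F)$ cut out by $\id_{\Lhat}$ regarded as a Schwartz function on $E\otimes_F\A_{F,\fin}$, compatibly with the level $\Gamma_0(N)$ of $\Theta_\varphi(\tau,C_\chi)$; and it is a cusp form because $E^1(F)\backslash E^1(\A_F)$ is compact, so the theta integral is rapidly decreasing on $\SL_2(F)\backslash\SL_2(\A_F)$. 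Finally, when $E/F$ is merely an étale algebra with involution one runs the same argument over the decomposition $E/F = \prod_i E_i/F_i$, handling the split factors $E_i = F_i\times F_i$ as in \cite{rbrforum} and the field factors as above.
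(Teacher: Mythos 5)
Your strategy---unfolding $\Theta_\varphi(\tau,C_\chi)$ into an adelic theta integral over $\Tbf(\Q)\backslash\Tbf(\A)$, using the compatibility of the Weil representations and of the Kudla--Millson forms under restriction of scalars to get the seesaw identity, and then evaluating the archimedean orbital integral place by place (degree-one Kudla--Millson form at the $q$ split places, Gaussian at the inert places)---is the same route the paper takes, and the holomorphy and parallel weight one are indeed obtained there by the computation you anticipate: at a split place the integral over $E^1_\sigma\simeq\R^\times$ against $\sgn$ reduces to $K_{\frac{1}{2}}(2\pi\vert\alpha_\sigma v_\sigma v'_\sigma\vert)$, and $K_{\frac{1}{2}}(w)=\sqrt{2\pi/w}\,e^{-w}$ turns the Whittaker integral into the holomorphic exponential $e^{2i\pi\Tr_{F/\Q}(\tau\alpha m)}$ supported on totally positive $m=\N_{E/F}(v)$.

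The step that fails as written is your justification of cuspidality. Compactness of $E^1(F)\backslash E^1(\A_F)$ guarantees that the theta integral converges, but it does not make the result rapidly decreasing on $\SL_2(F)\backslash\SL_2(\A_F)$: the $v=0$ term of the theta kernel survives the integration and in general contributes a nonzero constant term. (The classical theta series of a positive definite lattice is the theta integral over the compact group $\SO(n)$ and is an Eisenstein series by Siegel--Weil, not a cusp form.) The paper's argument is instead an explicit vanishing of the constant Fourier coefficient: because $\chi_\8$ is odd, the integrated archimedean Schwartz function carries the factor $\prod_{\sigma\in S_1}\sgn(v_\sigma+v'_\sigma)$, coming from the odd function $(x+x')e^{-\pi\vert\alpha_\sigma\vert(x^2+(x')^2)}$ at each split place, and this vanishes at $v=0$; since $E$ is a field, $v=0$ is the only vector with $\N_{E/F}(v)=0$, so $c^{(0)}_\varphi(C_\chi)=0$. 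Both hypotheses---oddness of $\chi_\8$ and $\Q$-anisotropy of $\Tbf$---enter precisely here, and neither is used by your rapid-decay claim, so that sentence should be replaced by this constant-term computation. A smaller imprecision: your $\varphi'_\8$, defined by integrating the Kudla--Millson form over the torus against $\chi_\8$, is no longer a Gaussian-type Schwartz function at the split places (the integral produces the Bessel factor above), so the phrase ``weight-one lowest-weight vector'' applies to the un-integrated local data, not to $\varphi'_\8$ itself.
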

If $\Tbf(\Q) \simeq E^1$ is an anisotropic torus attached to a product of field extensions $E_i/F_i$, the seesaw argument is still valid.  The right-hand side becomes a sum of products of diagonally restricted Hilbert modular forms for $\SL_2(F_i)$. See Theorem \ref{mainthmgen} for the statement.

The theorem can be summarized by the following seesaw 
\[
\begin{tikzcd}
\SO_V(\Q) \arrow[dash, dr] & \SL_2(F) \\
E^1 \arrow[dash]{u} \arrow[dash, ru] & \SL_2(\Q). \arrow[dash]{u}
\end{tikzcd}
\]
\begin{rmk}\begin{enumerate}
\item The condition $p \geq q$ on the signature of $V_\R$ and on the maximality of the $\R$-rank of the torus are necessary to ensure that the cycle $C_{\chi}$ has dimension $q$ and can be paired with the Kudla-Millson form of degree $q$.
\item This construction does not generalize to the dual pair $\SO_V \times \Sp_r(\Q)$ for $r>1$, since the tori do not give cycles of appropriate dimension. As we will see, the dimension of the cycles $C_\chi$ satisfies $\dim_\R C_\chi \leq q$ with equality exactly when $\Tbf$ is maximally $\R$-split. On the other hand, when $r>1$ the Kudla-Millson forms are not of degree $q$, but of degree $rq$. Hence, the only way the degree of the forms can match the dimension of the cycles is when $r=1$.
\end{enumerate}
\end{rmk}

\subsection{Spans of diagonal restrictions and toric cycles} Let us now add the conditions that $V$ is of dimension $p+q>4$, where $q$ is odd. { Let $L \subset V$ be an even unimodular lattice and $\varphi_\fin=\id_{\Lhat}$ where $\Lhat=L \otimes \Zhat$}. Let $S$ be the set of all characters $\chi$ that are as above, odd and of finite order. Let 
\begin{align}
    S_T  \coloneqq \Span\left \{ \left . \Theta_\varphi(C_\chi) \right \vert \ \chi \in S \right \}\subset S_{d}(\SL_2(\Z))
\end{align} be the subspace of cusp forms spanned by the diagonal restrictions $\Theta_\varphi(C_\chi)$. Several authors (including \cite{lieis},\cite{bogoli},\cite{keapit} and \cite{yang} ) have considered spans of diagonal restrictions of Hilbert modular forms, mainly in the case of Hilbert-Eisenstein series.

On the other hand, is also natural to ask what part of the homology is spanned by cycles associated to the torus $\Tbf$. Let us define the subspace
\begin{align}
    H_T \coloneqq \Span \left \{ \left . C_\chi \ \right \vert  \chi \in S\right \} \subset H_q(Y,\C)
\end{align}
spanned by the toric cycles $C_\chi$. Let us also define
\begin{align}
    H_{\cycle} \coloneqq \Span \left \{ C_n(\varphi) \right \} \subset H_{pq-q}(Y,\C)
\end{align}
to be the span of the special cycles. The orthogonal complement is the set
\begin{align}
    H_{\cycle}^\perp \coloneqq \left \{ \left . C \in H_q(Y,\C) \right \vert \langle C,C_n(\varphi) \rangle=0 \ \textrm{for all} \ n \in \NN_{>0} \right \} \subset H_q(Y,\C).
\end{align}

With the previous conditions on the signature of $V$, the adjoint of the Kudla-Millson lift is injective by a result of Bruinier-Funke \cite{bfinj}. In section \ref{spans} we deduce the following.

\begin{cor}\label{corollary121} Suppose that $V$ is of dimension $p+q>4$, where $p \geq q>0$ and $q$ odd. Then
\begin{align}
    \dim \left ( S_{d}(\SL_2(\Z)) \right )-\dim (S_T) = \dim \left ( H_q(Y,\C) \right ) -\dim \left ( \Span \{ H_{\cycle}^\perp , H_T \} \right ).
\end{align}
\end{cor}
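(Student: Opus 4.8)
The plan is to present the identity as a rank--nullity computation for the Kudla--Millson lift, viewed as a single linear map on homology, combined with a surjectivity statement imported from Bruinier--Funke \cite{bfinj}.

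First I would repackage the pairing of $\Theta_\varphi$ against cycles as one linear map. Because $q$ is odd, the cited result of Kudla--Millson \cite{KMIHES} gives $\Theta_\varphi(\tau,C)\in S_d(\SL_2(\Z))$ for every cycle $C$ (here $N=1$ since $L$ is unimodular), and since $\Theta_\varphi$ represents a class in $H^q(Y)\otimes M_d$, the assignment $C\mapsto\Theta_\varphi(\tau,C)$ descends to a $\C$-linear map
\[
\Phi\colon H_q(Y,\C)\longrightarrow S_d(\SL_2(\Z)),\qquad \Phi(C)=\sum_{n\geq 1}\langle C,C_n(\varphi)\rangle\, q^n .
\]
Two facts are then immediate. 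A cusp form is determined by its Fourier coefficients and $\Phi(C)$ has no constant term, so $\ker\Phi=\{\,C:\langle C,C_n(\varphi)\rangle=0\ \text{for all}\ n\geq 1\,\}=H_{\cycle}^\perp$; and by the definitions of $H_T$ and $S_T$ one has $\Phi(H_T)=S_T$.

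Next I would do the bookkeeping. From $\ker\Phi=H_{\cycle}^\perp$ we get $\dim\im\Phi=\dim H_q(Y,\C)-\dim H_{\cycle}^\perp$, and from $S_T=\Phi(H_T)$ we get $\dim S_T=\dim H_T-\dim(H_T\cap H_{\cycle}^\perp)$. Substituting these, together with the inclusion--exclusion identity $\dim\Span\{H_{\cycle}^\perp,H_T\}=\dim H_{\cycle}^\perp+\dim H_T-\dim(H_{\cycle}^\perp\cap H_T)$, into the right-hand side of the claimed equality collapses it to
\[
\dim H_q(Y,\C)-\dim\Span\{H_{\cycle}^\perp,H_T\}=\dim\im\Phi-\dim S_T .
\]
Hence the Corollary is equivalent to the single assertion $\dim\im\Phi=\dim S_d(\SL_2(\Z))$, that is, to the surjectivity of $\Phi$.

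Finally I would deduce this surjectivity from Bruinier--Funke. The adjoint of the Kudla--Millson lift is the theta lift $\iota\colon S_d(\SL_2(\Z))\to H^q(Y,\C)$ obtained by integrating $\Theta_\varphi$ against a cusp form; unfolding the theta integral shows that $\Phi$ and $\iota$ are mutually adjoint for the Kronecker pairing between $H_q(Y,\C)$ and $H^q(Y,\C)$ and the Petersson product on $S_d(\SL_2(\Z))$. All of these spaces are finite-dimensional and the pairings nondegenerate, so a map between them is surjective exactly when its adjoint is injective; hence $\Phi$ is onto $S_d(\SL_2(\Z))$ as soon as $\iota$ is injective, which under the running hypotheses ($p+q>4$, $q$ odd, $L$ unimodular) is precisely the theorem of Bruinier--Funke \cite{bfinj}. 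Substituting $\dim\im\Phi=\dim S_d(\SL_2(\Z))$ into the previous display gives the Corollary. I expect the only non-formal ingredient to be the invocation of \cite{bfinj} — in particular verifying that our geometric data meet its hypotheses; the remaining points (cuspidality for $q$ odd, well-definedness of $\Phi$ on homology classes, and the level-one bookkeeping) are routine and already in place.
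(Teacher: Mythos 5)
Your proof is correct and follows essentially the same route as the paper: reduce the identity by a rank--nullity/inclusion--exclusion computation to the surjectivity of the lift on $H_q(Y,\C)$, and obtain that surjectivity from the Bruinier--Funke injectivity of the adjoint via the nondegenerate Poincar\'e and Petersson pairings. The only (harmless) difference is that you identify $\ker\Theta=H_{\cycle}^\perp$ directly from the Fourier expansion of the cuspidal lift, whereas the paper establishes the inclusion $\ker\Theta\subset H_{\cycle}^\perp$ by pairing against Poincar\'e series through $\im(\overline{\Theta})^\perp$.
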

In particular, we would have $S_{d}(\SL_2(\Z))=S_T$ if and only if $H_q(Y,\C)=\Span \{ H_{\cycle}^\perp , H_T \}$.

{ \begin{rmk} Corollary \ref{corollary121} also holds for non-unimodular lattices, provided that the Kudla-Millson lift is injective. In particular, one could use stronger injectivity results of the Kudla-Millson lift due to Zuffetti, Metzler and Kiefer \cite{zuffetti, mz23, kz23}, generalizing the result of Bruinier-Funke in the unimodular case. 
\end{rmk}}

\subsection{Examples} Let us consider some examples in signature $(2,2)$. Let $V=\Mat_2(\Q)$ be the quadratic space with the quadratic form $\det$. For a suitable lattice $L$, the locally symmetric space attached to $\SO(2,2)$ is $Y_0(p) \times Y_0(p)$.

Let us first consider the \'etale algebra $E=\Q(\sqrt{D}) \times \Q(\sqrt{D})$ with involution being the Galois involution in both factors. After twisting by suitable characters, the toric cycle attached to $\Tbf(\Q) \simeq E^1$ is $\gamma_\psi \times \gamma_{\psi'}$, where $\gamma_{\psi}$ and $\gamma_{\psi'}$ are both attached to the same real quadratic field $K=\Q(\sqrt{D})$, where $p$ is split. This setting was considered by Darmon-Harris-Rotger-Venkatesh in \cite{dhrv}, where they show that the generating series
\begin{align}\label{eqdhrv}
    \Theta_{\gamma_{\psi} \otimes \gamma_{\psi'}}(\tau)= \sum_{n =1}^\8 \langle \gamma_{\psi},T_n\gamma_{\psi'} \rangle q^n
\end{align} is the diagonal restriction of a 'Hilbert modular form' for $\SL_2(\Q)\times \SL_2(\Q)$. In fact, they prove a more precise result as they express the generating series as the trace from level $pD^2$ to $p$ of a product of two weight one modular forms. The corresponding seesaw is 
\[
\begin{tikzcd}
\SO(2,2) \arrow[dash, dr] & \SL_2(\Q) \times \SL_2(\Q) \\
\Q(\sqrt{D})^1\times \Q(\sqrt{D})^1 \arrow[dash]{u} \arrow[dash, ru] & \SL_2(\Q), \arrow[dash]{u}
\end{tikzcd}
\]
where $\Q(\sqrt{D})^1$ are the elements of norm $1$ in $\Q(\sqrt{D})$.

Similarly, we can consider a biquadratic field $E=\Q(\sqrt{D_1},\sqrt{D_2})$ with the involution that sends $\sqrt{D_i}$ to $-\sqrt{D_i}$. The corresponding seesaw
\[
\begin{tikzcd}
\SO(2,2) \arrow[dash, dr] & \SL_2(\Q(\sqrt{D_1D_2})) \\
\Q(\sqrt{D_1},\sqrt{D_2})^1 \arrow[dash]{u} \arrow[dash, ru] & \SL_2(\Q), \arrow[dash]{u}
\end{tikzcd}
\]
where $\Q(\sqrt{D_1},\sqrt{D_2})^1$ are the elements of norm $1$. The image of the torus $\Q(\sqrt{D_1},\sqrt{D_2})^1$ is a product of geodesics $\gamma_1 \times \gamma_2$ where the geodesics are attached to $\Q(\sqrt{D_1})$ and $\Q(\sqrt{D_2})$ respectively. We discuss this example in Section \ref{lowrank}.

\paragraph{Acknowledgments.} We thank Henri Darmon for suggesting to look at these seesaws, Pierre Charollois, Luis Garcia, H\aa vard Damm-Johnsen and the referees for helpful comments.

\section{\'Etale algebras with involutions and algebraic tori} \label{etale}

In this section, we review the relations between algebraic tori in orthogonal groups and \'etale algebras with involutions, as explained in \cite{maxtor1}, \cite{maxtor2} and \cite{dgl}.

\subsubsection{} Let $E$ be a commutative $\Q$-algebra of even dimension $\dim_\Q(E)=2d$. It is said to be {\em \'etale over $\Q$} if $E \otimes \Qbar \simeq \Qbar^{2d}$. Equivalently, the \'etale algebra $E$ is a product of finitely many number fields $R$.  Let $\epsilon$ be a $\Q$-linear involution on $E$ and let $F \coloneqq E^\epsilon$ be the subalgebra fixed by $\epsilon$. We have the following three types 
\begin{itemize}
\item[$i)$] the involution $\epsilon$ preserves { each factor $R$ and $\restr{\epsilon}{R}\neq\idrm$},
\item[$ii)$] the involution $\epsilon$ does not preserve { one of the factors} $R$. In that case there is another factor $R'$ such that $R'=\epsilon(R) \simeq R$,
\item[$iii)$] the involution $\epsilon$ preserves the factor $R$ and $\restr{R}{\epsilon}=\idrm$.
\end{itemize}
Hence we can write $E$ as a sum $E=E_1 \times \cdots \times E_r$ of $\epsilon$-invariant subalgebras, where $E_i=R$ is a field in case $i)$ and $iii)$, and $E_i=R \times R'$ is a product of fields in case $ii)$. The fixed algebra $F$ is then the sum $F=F_1 \times \cdots \times F_r$ where $F_i=E_i^\epsilon$ is a number field.
From now on suppose furthermore that $F$ has degree $\frac{1}{2}[E\colon \Q]$ over $\Q$. In that case we only have case $i)$ or $ii)$ and we deduce the following.
\begin{prop}
There is an element $\delta \in F^\times$ such that $E$ is isomorphic to $F[\theta]/(\theta^2-\delta)$ and the involution $\epsilon$ sends $\theta$ to $-\theta$.
\end{prop}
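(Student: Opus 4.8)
The plan is to reduce to the factors of $E$ and build $\theta$ one factor at a time. Recall from the discussion above that $E=E_1\times\cdots\times E_r$ with $\epsilon$ preserving each $E_i$, that $F=F_1\times\cdots\times F_r$ with $F_i=E_i^\epsilon$ a number field, and that under the hypothesis $[F\colon\Q]=\frac12[E\colon\Q]$ only cases $i)$ and $ii)$ occur, so that each $E_i$ is a free $F_i$-module of rank $2$ ($[E_i\colon F_i]=2$ in case $i)$, and directly in case $ii)$). It therefore suffices to produce, for every $i$, an element $\theta_i\in E_i$ with $\epsilon(\theta_i)=-\theta_i$, with $\{1,\theta_i\}$ an $F_i$-basis of $E_i$, and with $\delta_i\coloneqq\theta_i^2\in F_i^\times$; then $\theta\coloneqq(\theta_1,\dots,\theta_r)$ and $\delta\coloneqq(\delta_1,\dots,\delta_r)$ will have the analogous properties over $F=\prod_i F_i$.

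For a single factor I would exploit that we are in characteristic zero. The projectors $\frac12(\idrm\pm\epsilon)$ split $E_i=F_i\oplus E_i^-$ with $E_i^-\coloneqq\{x\in E_i:\epsilon(x)=-x\}$, and since $\dim_{F_i}E_i=2$ the $F_i$-subspace $E_i^-$ is one-dimensional. Pick any nonzero $\theta_i\in E_i^-$; as $\theta_i\notin F_i$ the pair $\{1,\theta_i\}$ is $F_i$-linearly independent, hence an $F_i$-basis, and $\epsilon(\theta_i)=-\theta_i$ holds by construction. From $\epsilon(\theta_i^2)=\epsilon(\theta_i)^2=\theta_i^2$ we get $\delta_i\coloneqq\theta_i^2\in F_i$, and $\delta_i\neq 0$ because $E_i$ is \'etale over $\Q$, hence reduced, so $\theta_i^2=0$ would force $\theta_i=0$; as $F_i$ is a field this gives $\delta_i\in F_i^\times$.

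Assembling the factors, $\{1,\theta\}$ is an $F$-basis of $E$ and $\theta^2=\delta\in F^\times$, so the $F$-algebra homomorphism $F[T]/(T^2-\delta)\to E$ determined by $T\mapsto\theta$ is well defined and carries an $F$-basis to an $F$-basis, hence is an isomorphism of $F$-algebras. Under this isomorphism $\epsilon$ corresponds to the involution of $F[T]/(T^2-\delta)$ that fixes $F$ and sends $T$ to $-T$: both involutions restrict to the identity on $F$, both send $\theta$ to $-\theta$, and $F$ together with $\theta$ generates $E$. This yields the stated description.

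I expect no genuine obstacle here; the only points demanding care are that $E$ and $F$ need not be fields---which is what forces the factor-by-factor argument---and that one must verify $\delta\in F^\times$ rather than merely $\delta\in F$, which is exactly where reducedness of the \'etale algebra enters, at the split factors of type $ii)$.
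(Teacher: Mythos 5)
Your argument is correct, and it follows the same overall reduction as the paper (work factor by factor on the $\epsilon$-invariant pieces $E_i/F_i$), but the key step is handled differently. The paper splits into the two cases explicitly: for a quadratic field extension $E_i/F_i$ it invokes the standard fact that in characteristic zero such an extension is $F_i(\sqrt{\delta_i})$ for a non-square $\delta_i$, and for the split case $E_i\simeq F_i\times F_i$ it exhibits the generator $\theta_i=(-1,1)$ with $\delta_i=1$. You instead give a uniform argument via the eigenspace decomposition $E_i=F_i\oplus E_i^-$ coming from the projectors $\tfrac12(\idrm\pm\epsilon)$, taking $\theta_i$ to be any nonzero element of the one-dimensional $(-1)$-eigenspace and using reducedness of the \'etale algebra to get $\theta_i^2\in F_i^\times$. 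What your route buys is that it treats both cases at once and, more importantly, it makes explicit that $\epsilon$ really is the involution $\theta\mapsto-\theta$ under the isomorphism --- a point the paper's case $i)$ leaves implicit (there one must observe that $\epsilon$ restricted to $E_i$ is the unique nontrivial $F_i$-automorphism of the quadratic extension). The paper's version is shorter and makes the split case completely concrete, which is the case it needs later for isotropic tori. Both proofs are complete for the statement as given.
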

\begin{proof}
    For the case $i)$ the field extension $E_i/F_i$ is of degree $2$. Hence we have $E_i \simeq F_i[\theta_i]/(\theta_i^2-\delta_i)$ for some $\delta_i$ in $F_i^\times \setminus (F_i^\times)^2$. In case $ii)$ note we can identify $E_i \simeq F_i \times F_i$. Then we can take $\delta_i=1$ so that we get an isomorphism
    \begin{align}
    F_i[\theta_i]/(\theta_i^2-1) \longrightarrow E_i, \qquad
    \theta_i \longmapsto (-1,1).
    \end{align}
\end{proof}

\subsection{\'Etale algebras as quadratic spaces} \label{subsecetale} For $\alpha$ in $F^\times$ we define a $\epsilon$-hermitian form on $E$ by
\begin{align}
E \times E \longrightarrow E, \qquad
(x,y) \longmapsto \alpha x\epsilon(y).
\end{align}
 It is preserved by the elements of norm $1$
\begin{align}
E^1 \coloneqq \left \{ \left . x \in E^\times \, \right  \vert \, x\epsilon(x)=1 \right \}.
\end{align} 
Suppose that $F$ is a field and $E/F$ is an \'etale algebra. As { mentioned} in the introduction, the case where $E=F \times F$ is the split algebra will be excluded, hence we will restrict ourselves to the case where $E=F(\theta)$ is a quadratic field extension of $F$. In order to work with orthogonal groups instead of unitary groups, we view $E$ as an $F$-vector space and let $Q_\alpha$ be the quadratic form obtained by composing the hermitian form with the trace
\begin{align} \label{quadform2}
Q_\alpha \colon E \times E \longrightarrow F, \qquad
(x,y) \longmapsto \alpha \Tr_{E/F}x\epsilon(y)=\alpha(x\epsilon(y) + \epsilon(x)y).
\end{align}
Let $\SO_E$ be the orthogonal group of this quadratic space. We view it as an algebraic group over $F$ whose $F$-points are
\begin{align}
    \SO_E(F) \coloneqq \left \{\left . g \in \GL_2(F) \right \vert Q_\alpha(gx,gy)=Q_\alpha(x,y) \right \}.
\end{align}
\begin{prop} \label{Uismaxtor}
    Let $\delta \in F^\times$ be such that $E=F(\theta)$ with $\theta^2=\delta$. The map
    \begin{align}
    E^1  \longrightarrow \SO_E(F), \quad 
    x+y\theta \longmapsto \begin{pmatrix}
        x & y\delta \\ y & x
    \end{pmatrix}
\end{align}
is a group isomorphism. Furthermore, the restriction of scalars $ \Res_{F/\Q} \SO_E$ is a $\Q$-torus of rank $d$.
\end{prop}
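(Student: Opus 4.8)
The plan is to recognise the displayed map as the \emph{regular (left–multiplication) representation} of the $F$-algebra $E$ on itself, and then to identify $\SO_E$ with the norm-one torus of $E/F$.

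Fix the ordered $F$-basis $(1,\theta)$ of $E$, so that $\End_F(E)\cong\Mat_2(F)$ and hence $\SO_E(F)\subset\GL_2(F)$ as in the definition. Multiplication by $v=x+y\theta$ sends $1\mapsto x+y\theta$ and $\theta\mapsto y\delta+x\theta$ (using $\theta^2=\delta$), so in this basis it is exactly the matrix $\left(\begin{smallmatrix} x & y\delta \\ y & x\end{smallmatrix}\right)$; thus the map of the statement is $v\mapsto m_v$, where $m_v\in\End_F(E)$ is multiplication by $v$. This is a homomorphism $E^\times\to\GL_2(F)$ (since $m_{vw}=m_vm_w$) and it is injective ($m_v=\idrm$ forces $v=m_v(1)=1$). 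For $v\in E^1$ it lands in $\SO_E(F)$: writing $\N_{E/F}(v):=v\epsilon(v)\in F$, for all $w,w'\in E$ one has
\begin{align*}
Q_\alpha(m_vw,m_vw')=\alpha\,\Tr_{E/F}\!\bigl(vw\,\epsilon(v)\epsilon(w')\bigr)=\alpha\,\Tr_{E/F}\!\bigl(\N_{E/F}(v)\,w\,\epsilon(w')\bigr)=Q_\alpha(w,w'),
\end{align*}
because $\N_{E/F}(v)=1$ lies in $F$; and $\det m_v=\N_{E/F}(v)=1$ by the definition of the norm. (Equivalently, one checks $g^{T}Gg=G$ directly for the Gram matrix $G=\diag(2\alpha,-2\alpha\delta)$ of $Q_\alpha$ in the basis $(1,\theta)$.)

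For surjectivity, let $g\in\SO_E(F)$ and set $v:=g(1)$. Applying the quadratic form $q(w):=\tfrac12Q_\alpha(w,w)=\alpha\N_{E/F}(w)$ and using that $g$ is an isometry gives $\alpha\N_{E/F}(v)=q(v)=q(1)=\alpha$, so $v\in E^1$. Then $h:=m_v^{-1}g\in\SO_E(F)$ fixes $1$; since $F\theta=(F\!\cdot\!1)^\perp$ (because $Q_\alpha(1,\theta)=\alpha\Tr_{E/F}(\theta)$ vanishes), the line $F\theta$ is preserved by $h$, so $h(\theta)=\mu\theta$ with $\mu\in F^\times$, and $\mu^2Q_\alpha(\theta,\theta)=Q_\alpha(\theta,\theta)\neq0$ forces $\mu=\pm1$, while $\det h=\mu=1$. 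Hence $h=\idrm$ and $g=m_v$, so the map is an isomorphism.

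For the last assertion, all the formulas above are polynomial and valid over an arbitrary $F$-algebra, so $v\mapsto m_v$ is an isomorphism of algebraic groups over $F$ from $E^1=\ker\!\bigl(\N_{E/F}\colon\Res_{E/F}\Gm\to\Gm\bigr)$ onto $\SO_E$ (alternatively, base-changing to $E$ makes $Q_\alpha$ hyperbolic and $\SO_E$ split, directly exhibiting $\SO_E$ as a rank-one $F$-torus). Since $\Res_{E/F}\Gm$ is a rank-two $F$-torus and $\N_{E/F}$ is surjective with $\N_{E/F}^{*}(1)=(1,1)$ on character lattices, $X^*(E^1)\cong\Z^2/\Z(1,1)\cong\Z$ is torsion-free, so $E^1\cong\SO_E$ is a torus of rank $1$ over $F$. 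Applying $\Res_{F/\Q}$, which carries a rank-$r$ $F$-torus to a rank-$r[F:\Q]$ torus over $\Q$, and recalling $[F:\Q]=d$, we conclude that $\Res_{F/\Q}\SO_E$ is a $\Q$-torus of rank $d$. The one genuinely delicate point is the passage from a bijection on $F$-points to an isomorphism of $F$-group schemes; this is why I would run the isometry/surjectivity argument over an arbitrary $F$-algebra (or over $E$), the rest being routine.
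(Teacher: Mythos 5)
Your proof is correct, and both halves take a genuinely different route from the paper's. For the isomorphism $E^1\simeq\SO_E(F)$, the paper also observes that the inclusion $E^1\subset\SO_E(F)$ is clear, but it proves surjectivity by brute force: writing out the three quadratic equations coming from $g^T\diag(1,-\delta)g=\diag(1,-\delta)$ together with $\det g=1$ and solving them to force $b=c\delta$, $a=d$. Your argument --- identify the displayed matrix as the regular representation $m_v$, set $v=g(1)$, use $q(v)=q(1)$ to get $v\in E^1$, and then show the isometry $m_v^{-1}g$ fixing $1$ must fix $\theta$ because it preserves $1^\perp=F\theta$ and has determinant $1$ --- is cleaner, makes the role of the determinant condition transparent, and (as you note) is the version that upgrades painlessly to an isomorphism of algebraic groups, a point the paper glosses over. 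For the rank statement, the paper base-changes to $\Qbar$, splits $E\otimes\Qbar\simeq(F\otimes\Qbar)^2$, and exhibits $(\Res_{F/\Q}\SO_E)(\Qbar)\simeq(F\otimes\Qbar)^\times$ as a $d$-dimensional diagonalizable group; you instead identify $E^1$ with $\ker(\N_{E/F}\colon\Res_{E/F}\Gm\to\Gm)$, read off the character lattice $\Z^2/\Z(1,1)\simeq\Z$, and invoke that $\Res_{F/\Q}$ multiplies the rank by $[F:\Q]=d$. The two computations are of course equivalent (Weil restriction followed by base change to $\Qbar$ is exactly what the paper writes out by hand), but yours isolates the torus-theoretic content, while the paper's explicit diagonalization is reused later (e.g.\ in Proposition \ref{converse}). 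One cosmetic remark: your Gram matrix $\diag(2\alpha,-2\alpha\delta)$ is the literal one for the bilinear form \eqref{quadform2}; the paper's $\diag(1,-\delta)$ differs by the harmless scalar $2\alpha$, which changes neither the orthogonal group nor the argument.
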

\begin{proof}
    The parameter $\alpha$ is irrelevant here so let us assume $\alpha=1$. With respect to the basis $F$-basis $\{1,\theta\}$ of $E$, the quadratic form $Q_\alpha$ has Gram matrix $\diag(1,-\delta)$. It is clear that $E^1 \subset \SO_E(F)$. On the other hand, for a matrix $g=\begin{pmatrix} a & b \\ c & d    
\end{pmatrix}
$ the condition $g^T\begin{pmatrix}
        1 & 0 \\ 0 & -\delta
\end{pmatrix}g=\begin{pmatrix}
        1 & 0 \\ 0 & -\delta
\end{pmatrix}$ implies $a^2-c^2\delta=1$, $b^2-d^2\delta=-\delta$ and $ab=cd\delta.$
By multiplying the last equation by $d$ on both sides and using that $ad-bc=1$ we find that $b=c\delta$. Using the last equation again, we find that $ab=db$. If $b \neq 0$, this implies $a=d$. If $b=0$ one can easily check that $g=\pm \id_2$, hence $a=d=\pm 1$.

Over $\Qbar$, we have an isomorphism of quadratic space
\begin{align}
    E \otimes \Qbar & \longrightarrow F \otimes \Qbar \oplus F \otimes \Qbar \nonumber \\
    x+\theta y & \longmapsto (x+\theta y, x-\theta y).
\end{align}
On the other hand, the orthogonal group $\SO_E((F \otimes \Qbar)^2)$ of $(F \otimes \Qbar)^2$ is isomorphic to $(F \otimes \Qbar)^\times$, where the isomorphism is given by the map
\begin{align}
    (F \otimes \Qbar)^\times & \longrightarrow \SO((F \otimes \Qbar)^2) \nonumber \\
    \lambda & \longrightarrow \begin{pmatrix}
        \lambda & 0 \\ 0 & \lambda^{-1}
    \end{pmatrix}.
\end{align}
Hence $(\Res_{F/\Q} \SO_E)(\Qbar) \simeq (F \otimes \Qbar)^\times \simeq \Qbar^{d}$, is a torus of rank $d$.
\end{proof}

\subsubsection{} By restriction of scalars from $F$ to $\Q$ we obtain a $2d$-dimensional quadratic space $(V,Q)=\Res_{F/\Q} (E,Q_\alpha)$ over $\Q$, where $V=E \simeq \Q^{2d}$ and the quadratic form is defined by 
\begin{align} \label{quadform}
Q(x,y) \coloneqq \Tr_{F/\Q} \circ Q_\alpha(x,y).
\end{align} In particular, we have $Q(x,x) = \Tr_{F/\Q}(\alpha \N_{E/F}(x))$. Moreover, by restriction of scalars we have an embedding
\begin{align} \label{emb1}
    \SO_E(F) & \hooklongrightarrow \SO_V(\Q).
\end{align}
By Proposition \ref{Uismaxtor}, the image of the embedding is an algebraic $\Q$-torus of rank $d$. Conversely, if $\Tbf$ is a maximal $\Q$-torus in $\SO_V$, then there is an \'etale algebra $E$ such that $E^1 \simeq \SO_{E}(F) \simeq \Tbf(\Q)$. We will recall the proof in Subsection \ref{maxtorss} (see Proposition \ref{converse}).
\begin{rmk}
    By abuse of notation we will occasionally also denote by $Q_\alpha$ the quadratic form over $\Q$, instead of $Q$. It should always be clear from the context if we consider the quadratic space over $F$ or over $\Q$.
\end{rmk}

\subsubsection{} Let us now consider a general \'etale algebra $E/F$, with $\epsilon$-invariant factors $E_i/F_i$. For any place $v$ of $\Q$, the involution on $E$ extends to an involution $\epsilon_v$ on $E_{\Q_v} \coloneqq E \otimes \Q_v$ with fixed subalgebra $F_{\Q_v} \coloneqq F \otimes \Q_v$. In the same way as before, the algebra with involution $E \otimes \Q_v$ is a $2d$-dimensional quadratic space over $\Q_v$. Let $w$ be a place of $F$. Let us write $F_w=(F_i)_w$ and $E_w=(E_i)_w$ for the completions at $w$. We set $E_v \coloneqq \prod_{v \mid w} E_w$ and $F_v\coloneqq \prod_{v \mid w} F_w$, which are naturally isomorphic to $E_{\Q_v}$ and $F_{\Q_v}$ respectively.

Let $w$ be a non-archimedean place of $F_i$, where $E_i=F_i(\theta_i)$. Then, either $E_w=F_w(\theta_i)$ is non-split and the involution sends $\theta_i$ to $-\theta_i$, or $E_w=F_w \times F_w$ is split and the involution permutes the two factors. At a non-split place $w$ we have
\begin{align}
    \SO_E(F_w) = \left \{ \left . x \in E_w^\times \right \vert \N_{E_w/F_w}(x)=1\right\}.
\end{align}
On the other hand, if $w$ is split we have $\SO_E(F_w) \simeq  F_w^\times.
$

Define the following sets of archimedean places of $F$:
\begin{align}    
    S_1 & \coloneqq \{ \ \textrm{real embeddings of $F$ that extend to real embeddings of $E$} \ \}, \nonumber \\
    S_2 & \coloneqq \{ \ \textrm{real embeddings of $F$ that extend to complex embeddings of $E$} \ \}, \\
    S_3 & \coloneqq \{ \ \textrm{(pairs) of complex embeddings of $F$} \ \}. \nonumber
\end{align}
We denote the cardinality of those sets by
\begin{align} \label{defni}
    n_k \coloneqq \vert S_k \vert.
\end{align} We have $d=n_1+n_2+2n_3$. For any archimedean place $\sigma$ of $F$, the completion $E_\sigma/F_\sigma$ is an \'etale algebra with involution and we have the following possibilities:
\begin{enumerate}
    \item $F_\sigma=\R$ and $E_\sigma=\R \times \R$ is the split algebra where the involution $\epsilon$ permutes the two factors. In that case, we have $E^1_\sigma \simeq \SO_E(F_\sigma)\simeq \R^\times$. This happens exactly when $\sigma \in S_1$.
    \item $F_\sigma=\R$ and $E_\sigma=\C$ is the algebra where the involution $\epsilon$ is complex conjugation. In that case, we have $E^1_\sigma \simeq \SO_E(F_\sigma)\simeq S^1$ where $S^1 \subset \C^\times$ is the unit circle. This happens when exactly when $\sigma \in S_2$.
    \item $F_\sigma=\C$ and $E_\sigma=\C \times \C$ is the split algebra where the involution $\epsilon$ permutes the two factors.In that case, we have $E^1_\sigma \simeq \SO_E(F_\sigma)\simeq \C^\times$. This happens for any complex place $\sigma \in S_3$ of $F$.
\end{enumerate}
By taking the product of all archimedean places we get
\begin{align} \label{dimen1}
    E_\8^1 = (\R^{\times})^{n_1} \times (S^1)^{n_2} \times (\C^\times)^{n_3} = (\R^{\times})^{n_1+n_3} \times (S^1)^{n_1+n_2}.
\end{align}

Let $E_\8 = \prod_\sigma E_\sigma$ and $F_\8 = \prod_{\sigma \in S} F_\sigma \simeq \C^{n_1} \times \R^{n_2+n_3}$. The embeddings of $E$ give us the natural isomophism $E_\R \simeq \prod_\sigma E_\sigma$ of algebras with involutions, that restricts to the natural algebra isomorphism $F_{\R} \simeq F_\8$. Hence, we have an isomorphism of quadratic spaces $(E_\R, Q_\alpha) \simeq \bigoplus (E_\sigma,q_{\sigma(\alpha)})$ where $Q_{\sigma(\alpha)}(x,y)=\Tr_{E_\sigma/F_\sigma}(\sigma(\alpha)x\epsilon(y))$.
For $\alpha \in F^\times$ let $r_\alpha$ (resp. $s_\alpha$) be the number of embeddings $\sigma$ in $S_2$ such that $\sigma(\alpha)$ is positive (resp. negative). Note that $n_2=r_\alpha+s_\alpha$.
\begin{prop} \label{sign1} The signature of $E_\R$ is $(n_1+2r_\alpha+2n_3,n_1+2s_\alpha+2n_3)$. 
\end{prop}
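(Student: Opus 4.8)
The plan is to decompose the quadratic space $E_\R$ according to the archimedean places of $F$ and compute the signature of each local piece separately, then add up the contributions. By the isomorphism of quadratic spaces $(E_\R, Q_\alpha) \simeq \bigoplus_\sigma (E_\sigma, Q_{\sigma(\alpha)})$ recorded just before the statement, it suffices to determine the signature of $(E_\sigma, Q_{\sigma(\alpha)})$ for $\sigma$ in each of the three classes $S_1$, $S_2$, $S_3$, and this is an entirely local computation over $\R$ (or $\C$).

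First I would handle $\sigma \in S_1$: here $F_\sigma = \R$ and $E_\sigma = \R \times \R$ with $\epsilon$ swapping the factors, so $Q_{\sigma(\alpha)}\big((x_1,x_2),(y_1,y_2)\big) = \sigma(\alpha)(x_1 y_2 + x_2 y_1)$, which is $\sigma(\alpha)$ times a standard hyperbolic form on $\R^2$; its signature is $(1,1)$ regardless of the sign of $\sigma(\alpha)$. Next, for $\sigma \in S_2$: here $F_\sigma = \R$, $E_\sigma = \C$ with $\epsilon$ complex conjugation, so $Q_{\sigma(\alpha)}(x,y) = \sigma(\alpha)\Tr_{\C/\R}(x\bar y) = 2\sigma(\alpha)\,\re(x\bar y)$, which in the $\R$-basis $\{1, i\}$ of $\C$ has Gram matrix $2\sigma(\alpha)\,\diag(1,1)$; hence the signature is $(2,0)$ if $\sigma(\alpha)>0$ and $(0,2)$ if $\sigma(\alpha)<0$. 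Finally, for $\sigma \in S_3$: here $F_\sigma = \C$ and $E_\sigma = \C \times \C$ with $\epsilon$ swapping factors; viewed as a quadratic space over $\Q_v = \R$ (via restriction of scalars from $\C$ to $\R$) this is a $4$-dimensional real space, and the pairing $\sigma(\alpha)\Tr_{E_\sigma/F_\sigma}(x\epsilon(y))$ followed by $\Tr_{\C/\R}$ gives a nondegenerate form; one checks it is a sum of two hyperbolic planes over $\R$, so its signature is $(2,2)$.

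Summing: $S_1$ contributes $(n_1, n_1)$; the $r_\alpha$ places of $S_2$ with $\sigma(\alpha)>0$ contribute $(2r_\alpha, 0)$ and the $s_\alpha$ places with $\sigma(\alpha)<0$ contribute $(0, 2s_\alpha)$; and $S_3$ contributes $(2n_3, 2n_3)$. Adding these gives total signature $(n_1 + 2r_\alpha + 2n_3,\ n_1 + 2s_\alpha + 2n_3)$, as claimed; as a sanity check the dimensions add to $2n_1 + 2r_\alpha + 2s_\alpha + 4n_3 = 2(n_1 + n_2 + 2n_3) = 2d$, consistent with $\dim_\Q E = 2d$.

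The computations are all routine diagonalizations of small quadratic forms, so there is no serious obstacle; the only point requiring a little care is the $S_3$ case, where one must correctly track the two successive trace maps ($\Tr_{E_\sigma/F_\sigma}$ landing in $\C$, then the implicit $\Tr_{F_\sigma/\R} = \Tr_{\C/\R}$ coming from the restriction of scalars to $\Q$) and verify that the resulting real form on the $4$-dimensional space is split of signature $(2,2)$ rather than, say, definite — but since $E_\sigma = \C \times \C$ contains the isotropic factors of the swap involution, isotropy is immediate and the signature $(2,2)$ follows from nondegeneracy and dimension count.
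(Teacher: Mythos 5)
Your proof is correct and follows essentially the same route as the paper: decompose $(E_\R,Q_\alpha)\simeq\bigoplus_\sigma(E_\sigma,Q_{\sigma(\alpha)})$ over the archimedean places, compute the local signatures $(1,1)$, $(2,0)$ or $(0,2)$, and $(2,2)$ for $\sigma$ in $S_1$, $S_2$, $S_3$ respectively, and sum. Your treatment of the $S_3$ case is in fact slightly more careful than the paper's (which asserts the signature $(2,2)$ directly from the complex hyperbolic form without spelling out the second trace $\Tr_{\C/\R}$), but the argument is the same.
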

\begin{proof}
We have an isomorphism of quadratic spaces $(E_\R, Q_\alpha) \simeq \bigoplus (E_\sigma,Q_{\sigma(\alpha)})$. We only have to find the signature at each place $\sigma$. 
\begin{enumerate}
    \item If $\sigma \in S_1$, then $E_\sigma=\R \times \R$ and the involution permutes the two factors. Hence 
    \begin{align}
        Q_{\sigma(\alpha)}\left ( \begin{pmatrix}
            t_1 \\ t_1'
        \end{pmatrix},\begin{pmatrix}
            t_2 \\ t_2'
        \end{pmatrix}\right )=\sigma(\alpha)(t_1t_2'+t_1't_2)
    \end{align}
    and $E_\sigma$ has signature $(1,1)$.
    \item If $\sigma \in S_2$, then $E_\sigma=\C \simeq \R^2$ and the involution is the complex conjugation. Hence 
    \begin{align}
        Q_{\sigma(\alpha)}\left ( z_1,z_2\right )=\sigma(\alpha)\Tr_{\C/\R}(z_1\bar{z_2})=\sigma(\alpha)\left [\re(z_1)\re(z_2)+\im(z_1)\im(z_2) \right ],
    \end{align}
    and the signature of $E_\sigma$ is $(2,0)$ if $\sigma(\alpha)>0$ and $(0,2)$ if $\sigma(\alpha)<0$.
    \item If $\sigma \in S_3$, then $E_\sigma=\C\times \C$ and the involution permutes the two factors. Hence 
    \begin{align}
        Q_{\sigma(\alpha)}\left ( \begin{pmatrix}
            z_1 \\ w_1
        \end{pmatrix},\begin{pmatrix}
            z_2 \\ w_2
        \end{pmatrix}\right )=\sigma(\alpha)(z_1w_2+z_2w_1)
    \end{align}
    and $E_\sigma$ has signature $(2,2)$. 
\end{enumerate}
It follows that the signature of $E_\R$ is
\begin{align}
    n_1(1,1)+r_\alpha(2,0)+s_\alpha(0,2)+n_3(2,2)=(n_1+2r_\alpha+2n_3,n_1+2s_\alpha+2n_3)
\end{align}
\end{proof}
\subsection{\'Etale algebras of maximal tori} \label{maxtorss}

Let $(V,Q)$ be a non-degenerate quadratic $\Q$-space of dimension $2d$ with orthogonal group $\SO_V$. An algebraic $\Q$-group $\Tbf \subset \SO_V$ is a $\Q$-torus of rank $a$ if $\Tbf(\Qbar) \simeq \Qbar^a$. If $K$ is a field extension of $\Q$, we say that $\Tbf$ is $K$-split if $\Tbf(K)\simeq K^a$. A torus $\Tbf$ is maximal if { it is not contained in a torus of larger rank}. The rank of a maximal $K$-split torus $\Tbf' \subset \Tbf$ is called the $K$-rank of $\Tbf$. The orthogonal group $\SO_V$ is of $\R$-rank $q$ (since we assume that $p\geq q$). Hence, if $\Tbf$ is of maximal $\R$-rank, then the $\R$-rank of $\Tbf$ is $q$.
\subsubsection{} Consider the $\Q$-algebra $\End(V)$ of $\Q$-linear endomorphism of $V$. We view $\SO_V \subset \GL_{2d}$, so that we view $\End(V) \subset \Mat_{2d}(\Q)$. It is equipped with a natural involution $\epsilon_Q$ defined by 
\begin{align*}
    Q(xv,w)=Q(v,\epsilon_Q(x)w)
\end{align*}
for $x \in E$. Explicitely we have $\epsilon_Q(x)=A^{-1}_Qx^TA_Q$ where $A_Q$ denotes the Gram matrix of $Q$ and $x^T$ is the transpose of $x$. Let $E_T \subset \End(V)$ be the subalgebra consisting of all the $\Q$-endomorphisms $x \in \End(V)$ such that $xt=tx$ for any $t \in {\Tbf}(\Q)$. We will denote it simply by $E$ since there is no risk of confusion, and let $F$ be the subalgebra fixed by $\epsilon_Q$. We view $\Tbf(\Q) \subset \SO_V(\Q) \subset \End(V)$. The following proposition is proved in \cite[Proposition.~3.3]{maxtor1}. 

\begin{prop}\label{converse} Let $\Tbf$ be a maximal $\Q$-torus in $\SO_V$. The algebra $(E,\epsilon_Q)$ is an \'etale algebra with involution. We have $2\dim_\Q F=\dim_\Q E$. 
\end{prop}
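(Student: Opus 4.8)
\emph{Proof proposal.} The plan is to pass to $\Qbar$, where $\Tbf$ becomes a maximal torus of the split group $\SO_{2d}$ acting through its standard representation, read off both assertions from the weight--space decomposition, and descend back to $\Q$. Recall that $E$ is the centralizer of $\Tbf(\Q)$ in $\End(V)$, that $F=E^{\epsilon_Q}$, and that $\epsilon_Q$ is the adjoint anti-involution $x\mapsto A_Q^{-1}x^{T}A_Q$.

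First I would establish that $E$ is étale of dimension $2d$. Since $\Tbf$ is a $\Q$-torus, $\Tbf(\Q)$ is Zariski dense in $\Tbf$, so $E$ is in fact the centralizer of the algebraic subgroup $\Tbf\subset\GL(V)$; as the commutation conditions defining it are $\Q$-linear, $E\otimes_\Q\Qbar$ is the centralizer of $\Tbf_{\Qbar}$ in $\End(V_{\Qbar})$. Over $\Qbar$ the form $Q$ is split and $\Tbf_{\Qbar}$ is a maximal torus of $\SO(V_{\Qbar})\cong\SO_{2d}$, so $V_{\Qbar}=\bigoplus_{i=1}^{2d}V_{\chi_i}$ decomposes into $2d$ pairwise distinct one-dimensional weight spaces (the weights of the standard representation of $\SO_{2d}$ all have multiplicity one). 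An operator commuting with $\Tbf_{\Qbar}$ preserves each $V_{\chi_i}$ and acts on it by a scalar, so $E\otimes\Qbar=\prod_{i=1}^{2d}\End(V_{\chi_i})\cong\Qbar^{2d}$; hence $E$ is étale over $\Q$ with $\dim_\Q E=2d$.

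Next I would check that $\epsilon_Q$ stabilizes $E$, so that $(E,\epsilon_Q)$ is an étale algebra with involution (being an anti-automorphism of the commutative algebra $E$, $\epsilon_Q$ restricts to an order-$\leq 2$ algebra automorphism). For $t\in\Tbf(\Q)\subset\SO_V(\Q)$ one has $\epsilon_Q(t)=t^{-1}$, hence for $x\in E$ and any such $t$,
\[
\epsilon_Q(x)\,t=\epsilon_Q(x)\,\epsilon_Q(t^{-1})=\epsilon_Q(t^{-1}x)=\epsilon_Q(xt^{-1})=\epsilon_Q(t^{-1})\,\epsilon_Q(x)=t\,\epsilon_Q(x),
\]
using that $\epsilon_Q$ reverses products and that $x$ commutes with $t^{-1}$; thus $\epsilon_Q(x)\in E$. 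To compute $\dim_\Q F$ I would again work over $\Qbar$: taking $\langle\epsilon_Q\rangle$-invariants commutes with the flat base change $\Q\to\Qbar$, so $\dim_\Q F=\dim_{\Qbar}(E\otimes\Qbar)^{\epsilon_Q}$. Now $\epsilon_Q$ permutes the $2d$ primitive idempotents of $E\otimes\Qbar$, namely the projectors $\pi_{\chi_i}$ onto the weight spaces. Since $Q$ is $\Tbf_{\Qbar}$-invariant it pairs $V_\chi$ nontrivially only with $V_{\chi^{-1}}$, and by nondegeneracy of $Q$ together with $\dim V_\chi=\dim V_{\chi^{-1}}=1$ this pairing is perfect; plugging this into the defining relation $Q(\pi_\chi v,w)=Q(v,\epsilon_Q(\pi_\chi)w)$ gives $\epsilon_Q(\pi_\chi)=\pi_{\chi^{-1}}$. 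As no weight $\chi$ of the standard representation of $\SO_{2d}$ satisfies $\chi=\chi^{-1}$, this permutation of the $2d$ idempotents is a product of $d$ transpositions, so $(E\otimes\Qbar)^{\epsilon_Q}\cong\Qbar^{d}$ and $\dim_\Q F=d=\tfrac12\dim_\Q E$, which is the claim.

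The argument has no deep step --- which is why the statement is merely "recalled" --- the only genuine inputs being the structure of the standard representation of the split group of type $D_d$ (all weights of multiplicity one, the weight set stable and fixed-point-free under $\chi\mapsto\chi^{-1}$, and $V_\chi$ perfectly paired with $V_{\chi^{-1}}$) and the routine base-change compatibilities for centralizers and for $\langle\epsilon_Q\rangle$-invariants, which rely on $\Tbf(\Q)$ being Zariski dense in $\Tbf$. The step most prone to slips is keeping the base change honest, i.e. identifying $E\otimes\Qbar$ with exactly the weight-diagonal subalgebra rather than something larger. An alternative avoiding $\Qbar$ altogether is to decompose $V$ into $\Q$-irreducible $\Tbf$-submodules --- maximality of $\Tbf$ forcing this to be multiplicity free --- so that $E=\End_\Tbf(V)$ is a product of number fields, and then rule out a factor on which $\epsilon_Q$ is the identity by the same weight analysis; I would present the base-change version as the main line since it yields both assertions at once.
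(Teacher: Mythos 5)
Your proposal is correct and follows essentially the same route as the paper: diagonalize $\Tbf$ over $\Qbar$ so that $E\otimes\Qbar$ becomes the diagonal subalgebra $\Qbar^{2d}$, verify that $\epsilon_Q$ preserves $E$ via $\epsilon_Q(t)=t^{-1}$ and the anti-automorphism property, and observe that $\epsilon_Q$ swaps the $2d$ factors in inverse pairs so that $F\otimes\Qbar\simeq\Qbar^{d}$. You phrase the diagonalization in terms of weight spaces of the standard representation and supply details the paper leaves implicit (Zariski density for the base change of the centralizer, $\epsilon_Q(\pi_\chi)=\pi_{\chi^{-1}}$, fixed-point-freeness of $\chi\mapsto\chi^{-1}$), but the substance is identical.
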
 
\begin{proof} 
Over $\Qbar$, the quadratic space $V$ is isomorphic to $\Qbar^{2d}$ with quadratic form $Q(v,v)=v_1v_{d+1}+v_2v_{d+2}+\cdots+v_dv_{2d}$ {\em i.e.} with Gram matrix $\begin{pmatrix}
    0 & \id_d \\ \id_d & 0
\end{pmatrix}$. With respect to this quadratic form, the torus can be diagonalized in $\SO_V(\Qbar)$ to
\begin{align}
    \Tbf(\Qbar) \simeq \left \{ \left . \diag(t_1, \dots,t_d,t_1^{-1},\dots,t_d^{-1}) \right \vert t_i \in \Qbar \right \} \simeq \Qbar^d
\end{align}
 Hence, the centralizer $E \otimes \Qbar \simeq \Qbar^{2d}$ consists of diagonal matrices in $\End(V_{\Qbar})$. Thus, $E$ is an \'etale algebra. We have $\epsilon_Q(g)=g^{-1}$ for any $g \in \SO_V(\Q)$.  In particular, we have $\epsilon_Q(t)=t^{-1}$ for any $t \in \Tbf(\Q)$. Hence, the involution $\epsilon_Q$ preserves $E$ since for any $x \in E$ we have
 \begin{align}
     \epsilon_Q(x)t=\epsilon_Q(xt^{-1})=\epsilon_Q(t^{-1}x)=t\epsilon_Q(x).
 \end{align}
Moreover, the involution permutes $a_i$ with $b_i$ in $\diag(a_1, \dots,a_d,b_1,\dots,b_d) \in E \otimes \Qbar$. It follows that
\begin{align}
    F \otimes \Qbar \simeq \left \{ \left . \diag(a_1, \dots,a_d,a_1,\dots,a_d) \right \vert a_i \in \Qbar \right \} \simeq \Qbar^d
\end{align}
and $2\dim_\Q F=\dim_\Q E=2d$.
\end{proof}
\subsubsection{} For $\alpha \in F^\times$ we have the quadratic form
\begin{align}
    Q_\alpha \colon E \times E \longrightarrow F, \qquad
    (x,y) \longmapsto Q_\alpha(x,y)=\alpha \Tr_{E/F}( x \epsilon_Q(y))
\end{align}
that we already defined in \eqref{quadform}. The restriction of scalars $\Res_{F/\Q} (E,Q_\alpha)$ is the quadratic $\Q$-vector space where the quadratic form is $\Tr_{F /\Q} \circ Q_\alpha$. Let us now prove that $\Tbf(\Q) \simeq E^1$, where $E=E_T$ is the \'etale algebra with involution defined above.
\begin{prop} \label{restr}
    We have $\Res_{F/\Q} (E,Q_\alpha) \simeq (V,Q) $ for some $\alpha$ in $F^\times$. Moreover, the torus $\Tbf(\Q)$ is isomorphic to $\SO_{E}(F)$.
\end{prop}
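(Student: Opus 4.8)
The plan is to transport the whole picture onto the algebra $E$ via its action on $V$, and then to recognise $Q$ as obtained from a scaled $\epsilon_Q$-hermitian form on $E$ by composing with $\Tr_{F/\Q}$. The first thing I would record is that $V$ is free of rank one as an $E$-module. Over $\Qbar$ the torus $\Tbf$ is diagonal and, as in the proof of Proposition \ref{converse}, its centralizer $E\otimes\Qbar$ is the full algebra of matrices diagonal in the same basis, acting coordinatewise on $V\otimes\Qbar$; hence $V\otimes\Qbar$ is free of rank one over $E\otimes\Qbar$. Writing $E=\prod_j R_j$ as a product of number fields and $V=\prod_j V_j$ accordingly, this forces $\dim_{R_j}V_j=1$ for each $j$, so $V\simeq E$ as $E$-modules. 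Fixing such an isomorphism, I may henceforth regard $(V,Q)$ as $E$ equipped with a nondegenerate symmetric $\Q$-bilinear form, still written $Q$, which by the very definition of $\epsilon_Q$ satisfies $Q(xy,z)=Q(y,\epsilon_Q(x)z)$ for all $x,y,z\in E$.

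The core step is the transfer of $Q$. Since $E$ is étale over $\Q$, the trace form $\Tr_{E/\Q}$ is nondegenerate, so there is a unique $\Q$-linear map $\psi\colon E\to E$ with $Q(w,z)=\Tr_{E/\Q}(\psi(z)\,w)$ for all $w,z\in E$. Feeding the compatibility $Q(xy,z)=Q(y,\epsilon_Q(x)z)$ into this description gives $\psi(\epsilon_Q(x)z)=x\,\psi(z)$; taking $z=1$ and setting $\alpha\coloneqq\psi(1)\in E$ yields $\psi(y)=\epsilon_Q(y)\,\alpha$, and therefore $Q(w,z)=\Tr_{E/\Q}(\alpha\, w\,\epsilon_Q(z))$. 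Using that $Q$ is symmetric and that $\Tr_{E/\Q}$ is invariant under the $\Q$-algebra involution $\epsilon_Q$, I would then deduce $\alpha=\epsilon_Q(\alpha)$, i.e. $\alpha\in F$, while nondegeneracy of $Q$ forces $\alpha\in F^\times$. Finally, from $\Tr_{E/\Q}=\Tr_{F/\Q}\circ\Tr_{E/F}$ and $\alpha\in F$ one gets $Q(w,z)=\Tr_{F/\Q}\!\big(\alpha\,\Tr_{E/F}(w\,\epsilon_Q(z))\big)=\Tr_{F/\Q}(Q_\alpha(w,z))$, which is precisely the quadratic space $\Res_{F/\Q}(E,Q_\alpha)$.

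It then remains to identify the torus. The $E$-module structure on $V$ together with the identity $Q=\Tr_{F/\Q}\circ Q_\alpha$ produces a closed $\Q$-morphism $\Res_{F/\Q}\SO_E\to\SO_V$ which on $\Q$-points is the inclusion $\SO_E(F)=E^1\hookrightarrow\SO_V(\Q)$; here one checks, exactly as in Proposition \ref{Uismaxtor}, that an element $g\in(E\otimes\Qbar)^\times$ preserves $Q$ if and only if $g\,\epsilon_Q(g)=1$ (using that $V$ is a faithful $E$-module), and that $\Res_{F/\Q}\SO_E$ is a $\Q$-torus of rank $d$ — the argument of Proposition \ref{Uismaxtor} applying factorwise to each $E_i/F_i$, split or not, since $2\dim_\Q F=\dim_\Q E$ excludes type $iii)$. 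Over $\Qbar$ the torus $\Tbf$ is diagonal, hence contained in $(E\otimes\Qbar)^\times$, and being inside $\SO_V$ it lies in $(E\otimes\Qbar)^1$; by Galois descent $\Tbf$ is contained in the image $\Tbf'$ of $\Res_{F/\Q}\SO_E$ in $\SO_V$. As $\Tbf$ and $\Tbf'$ are both $\Q$-tori of rank $d$ and $\Tbf$ is maximal, $\Tbf=\Tbf'$, whence $\Tbf(\Q)=\SO_E(F)\simeq E^1$.

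The main obstacle is the middle step: proving that \emph{every} symmetric $\Q$-bilinear form on $E$ compatible with $\epsilon_Q$ is of the shape $\Tr_{F/\Q}\circ Q_\alpha$ for a single $\alpha\in F^\times$. This is where the étale hypothesis genuinely enters, through nondegeneracy of $\Tr_{E/\Q}$, and where the symmetry-versus-involution bookkeeping has to be carried out with care; the first and third steps are essentially formal, the only subtlety there being the routine passage between $\Q$-points and $\Qbar$-points needed to promote set-theoretic containments of groups to containments of algebraic groups.
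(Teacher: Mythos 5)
Your proof is correct and follows essentially the same route as the paper: identify $V$ as a free rank-one $E$-module, use nondegeneracy of the trace form to write $Q(w,z)=\Tr(\alpha\, w\,\epsilon_Q(z))$ for some $\alpha$, deduce $\alpha\in F^\times$ from the symmetry of $Q$, and identify $\Tbf$ with $\SO_E$ via $\epsilon_Q(t)=t^{-1}$ together with maximality. The only cosmetic difference is that you phrase the transfer of $Q$ through a single map $\psi$ dual to the trace form where the paper composes two $E$-linear dualities $f_{\Tr}^{-1}\circ f_Q$, and you run the torus comparison at the level of algebraic groups over $\Qbar$ rather than directly on $\Q$-points; both variants are equivalent.
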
 
\begin{proof}
    The algebra $E$ acts faithfully on $V$. Since they have the same dimension, $V$ is an $E$-module of rank $1$. Let $v_0$ in $V$ be a module generator, then we have an isomorphism of $\Q$ vector spaces $E \simeq V$ given by $e \mapsto ev_0$. We want to check that this is an isomorphism of quadratic spaces.

    The quadratic form $Q$ induces an isomorphism of $E$ with its dual
    \begin{align}
        f_Q \colon E & \longrightarrow E^\vee=\Hom_\Q(E,\Q) \nonumber \\
        x & \longmapsto f_Q(x)[y]=Q(xv_0,yv_0).
    \end{align}
    The map is $E$-linear in the sense that for every $e \in E$ we have $f_Q(ex)=\epsilon_Q(e)f_Q(x)$.
     On the other hand, the trace form also induces an $E$-linear isomorphism of $E$ with its dual
    \begin{align}
        f_{\Tr} \colon E & \longrightarrow E^\vee=\Hom_\Q(E,\Q) \nonumber \\
        x & \longmapsto f_{\Tr}(x)[y]=\Tr_{F /\Q}(x\epsilon_Q(y)).
    \end{align}
    It is also linear in the sense that $f_{\Tr}(ex)=\epsilon_Q(e)f_{\Tr}(x)$.
    Hence $f_{\Tr}^{-1} \circ f_Q$ is an $E$-linear automorphism of $E$, satisfying $(f_{\Tr}^{-1} \circ f_Q)(xe)=x(f_{\Tr}^{-1} \circ f_Q)(e)$. It follows that $f_{\Tr}^{-1} \circ f_Q(x)=\alpha x$ for some nonzero $\alpha$ in $E$, so that $f_Q(x)=f_{\Tr}(\alpha x)$. Hence, for any $x,y$ in $E$
    \begin{align}
     Q(xv_0,yv_0) = \Tr_{F /\Q}(\alpha x\epsilon_Q(y)).  
    \end{align}
    By the symmetry of $Q$, after setting $y=1$ we have
    \begin{align}
        0 & = Q(xv_0,v_0)-Q(v_0,xv_0) \nonumber \\
        & = \Tr_{F /\Q}(\alpha x)-\Tr_{F /\Q}(\alpha \epsilon_Q(x)) \nonumber \\
        & = \Tr_{F /\Q}(\alpha x)-\Tr_{F /\Q}(\epsilon_Q(\alpha) x) \nonumber \\
        & = \Tr_{F /\Q}((\alpha-\epsilon_Q(\alpha))x)
    \end{align}
    for any $x$ in $E$. Since the trace form is non-degenerate, it follows that $\alpha$ is in $F^\times$.

    Finally, let us prove that $\Tbf(\Q) \simeq \SO_E(F)$. We have shown that $E^1 \simeq \SO_E(F)$. On the one hand, for $t \in \Tbf(\Q)$ we have $Q(tv,w)=Q(v,t^{-1}w)$. On the other hand, by definition of the involution $\epsilon_Q$ we have $Q(tv,w)=Q(v,\epsilon_Q(t)w)$. Hence $\epsilon_Q(t)=t^{-1}$ and $t \in E^1$. This shows the inclusion $\Tbf(\Q) \subset E^1$, and the equality follows from the fact that $\Tbf(\Q)$ is a maximal $\Q$-torus contained in the (also maximal) torus $E^1$, see Proposition \ref{Uismaxtor}.
\end{proof}

\begin{prop} \label{rankcondi}
 If $\Tbf$ has maximal $\R$-rank, then $n_1=q$ and $F$ is totally real.
\end{prop}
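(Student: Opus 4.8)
The plan is to compute the $\R$-rank of $\Tbf$ directly from the archimedean decomposition \eqref{dimen1} and to confront it with the signature formula of Proposition \ref{sign1}.

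First I would rewrite \eqref{dimen1} as an isomorphism of real algebraic tori $\Tbf_\R \simeq \Gm^{n_1}\times (S^1)^{n_2}\times (\Res_{\C/\R}\Gm)^{n_3}$, the $n_1$ factors coming from the places in $S_1$, the $n_2$ factors from $S_2$, and the $n_3$ factors from the complex places of $F$. The split torus $\Gm$ has $\R$-rank $1$; the norm-one circle $S^1$ is anisotropic, of $\R$-rank $0$; and $\Res_{\C/\R}\Gm$, although $2$-dimensional, has $\R$-rank $1$ (its maximal split subtorus being the diagonal $\Gm$). Since the $\R$-rank of a product of tori is the sum of the $\R$-ranks, the $\R$-rank of $\Tbf$ equals $n_1+n_3$.

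Now I bring in the hypotheses. Since $p\geq q$, the group $\SO_V$ has $\R$-rank $q$, so the assumption that $\Tbf$ has maximal $\R$-rank reads $n_1+n_3=q$. On the other hand $(V,Q)\simeq\Res_{F/\Q}(E,Q_\alpha)$ by Proposition \ref{restr}, hence $V_\R\simeq E_\R$ as quadratic spaces, and by Proposition \ref{sign1} the negative index of $V_\R$ is $q=n_1+2s_\alpha+2n_3$ with $s_\alpha\geq 0$, so $q\geq n_1+2n_3$. Combining with $n_1+n_3=q$ forces $n_3=0$ and then $n_1=q$; and $n_3=\lvert S_3\rvert=0$ is exactly the statement that $F$ has no complex place, i.e.\ that $F$ is totally real. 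The argument is pure bookkeeping once \eqref{dimen1} and Proposition \ref{sign1} are available; the only step requiring care — and the only plausible source of error — is the $\R$-rank count of the three archimedean factors, in particular distinguishing the anisotropic circle $S^1$ (rank $0$) from the Weil restriction $\Res_{\C/\R}\Gm$ (rank $1$).
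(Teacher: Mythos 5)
Your proof is correct and follows essentially the same route as the paper: both compute the $\R$-rank of $\Tbf$ as $n_1+n_3$ from the archimedean decomposition \eqref{dimen1} and then combine the maximality hypothesis $n_1+n_3=q$ with the signature formula $q=n_1+2s_\alpha+2n_3$ of Proposition \ref{sign1} to force $n_3=s_\alpha=0$ and $n_1=q$. Your explicit rank count of the three archimedean factors (split $\Gm$ of rank $1$, anisotropic $S^1$ of rank $0$, $\Res_{\C/\R}\Gm$ of rank $1$) is a slightly more careful justification of the step the paper reads off directly from $\Tbf(\R)$, but the argument is the same.
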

\begin{proof} From Proposition \ref{restr} we have $\Tbf(\Q) \simeq \SO_{E}(F) \simeq E^1$. Let $n_k$ be defined as in \eqref{defni}: $n_1$ is the number of real embedding of $F$ that extend to real embeddings of $E$, $n_2$ is the number of real embeddings of $F$ that extend to a complex embedding of $E$ and $n_3$ is the number of (pairs) of complex embeddings of $F$. By \eqref{dimen1}, we have
    $\Tbf(\R) \simeq E_\8^1= (\R^{\times})^{n_1+n_3} \times (S^1)^{n_1+n_2}$, hence $a=n_1+n_3$ and $b=n_1+n_2$. On the other hand, by Proposition \ref{sign1}, the signature of $V_\R=E_\R$ is $(p,q)=(n_1+2r_\alpha+2n_3,n_1+2s_\alpha+2n_3)$, where $r_\alpha$ (respectively $s_\alpha$) is the number of places in $S_2$ for which $\sigma(\alpha)$ is positive (respectively negative). So if $\Tbf$ has maximal $\R$-rank, then $a=q$ and we must have $n_3=s_\alpha=0$.
\end{proof}

\section{Kudla-Millson theta correspondence}

\subsection{Weil representation} \label{subsecweil}
Let $(\Vcal,\Qcal)$ be a quadratic space of dimension $2m$ over a totally real field $\kbf$ of dimension $k$. Let $\SO_{\Vcal}$ be the orthogonal group of $\Vcal$. We will consider the following two cases:
\begin{itemize}
    \item[$-$] the field is $\kbf=\Q$ of degree $k=1$ and the quadratic space $(\Vcal,\Qcal)$ is an arbitrary quadratic space $(V,Q)$ like in the introduction, of dimension $2m=2d$. The group $\SO_{\Vcal}$ is the orthogonal group $\SO_V$.
    \item[$-$] the field $\kbf=F$ is an arbitrary totally real field of degree $k=d$. The quadratic space is $\Vcal=E$, where $E$ is an \'etale algebra viewed as quadratic space of dimension $2m=2$ over $F$, and equipped with the quadratic form $Q_\alpha$. The orthogonal group $\SO_{\Vcal}=\SO_E \simeq E^1$ is a torus.
\end{itemize}

\subsubsection{} Let $\Wcal=\Vcal \oplus \Vcal \simeq \Vcal \otimes \kbf^2$ be the $4m$-dimensional symplectic space over $\kbf$ with the symplectic form 
\begin{align}
    \Bcal \left (\begin{pmatrix}
        v_1 \\ v_2
    \end{pmatrix},\begin{pmatrix}
        w_1 \\ w_2
    \end{pmatrix} \right )=\Qcal(v_1,w_2)-\Qcal(w_1,v_2).
\end{align} Its symplectic group is $\Sp(\Wcal) \simeq \Sp_{4m}(\kbf)$. At a place $w$ of $\kbf$, let $\Vcal_w \coloneqq \Vcal \otimes k_w$ be the completion. There is a local projective unitary representation $\omega$ on the space $\Scal(\Vcal_w)$ of Schwartz-Bruhat functions, called the Weil representation. It is a projective representation in the sense $\omega(g_1g_2)=c(g_1,g_2)\omega(g_1)\omega(g_2)$ for some complex cocycle $c(g_1,g_2)$ satisfying $\vert c(g_1,g_2) \vert=1$. After passing to the adèles, we get a unitary representation
\begin{align}
    \omega \colon \Sp_{4m}(\kbf) \longrightarrow \Ucal(\Scal(\Vcal_\A)),
\end{align}
which is again only projective. However, for certain subgroups of $\Sp(\Wcal_\A)$ the cocycle is trivial and we obtain a true ({\em i.e.} non-projective) representation. Let us consider some special cases and give some concrete formulas for the Weil representation.

\subsubsection{}Consider the subgroup $\SL_2(\kbf) \subset \Sp_{4m}(\kbf)$, embedded as
\begin{align} \label{slemb}
    \begin{pmatrix}
        a & b \\ c & d
    \end{pmatrix} \hooklongrightarrow \begin{pmatrix}
        a & & & b & & \\  & \ddots & &  & \ddots & \\  & & a &  & & b \\ c & & & d & & \\  & \ddots & &  & \ddots & \\  & & c &  & & d
    \end{pmatrix}.
    \end{align} The subgroup $\SO_{\Vcal}(\kbf) \subset \Sp_{4m}(\kbf)$, embedded as
\begin{align} \label{orthemb}
    h \hooklongrightarrow \begin{pmatrix} h & 0 \\ 0 & h
    \end{pmatrix}
    \end{align}
commutes with $\SL_2(\kbf)$. Hence, we can embed the product $\SL_2(\kbf) \times \SO_{\Vcal}(\kbf)$ as a subgroup of $\Sp_{4m}(\kbf)$. After passing to the adèles, the projective representation of $\Sp_{4m}(\A_\kbf)$ restricts to a true representation
\begin{align} \label{adelicrep}
    \omega \colon \SL_2(\A_\kbf) \times \SO_\Vcal(\A_\kbf) \longrightarrow \Scal(\Vcal_\A).
\end{align} 

Let us describe this action more precisely, on a Schwartz function $\varphi=\varphi_\8 \otimes \varphi_\fin \in \Scal(\Vcal_\R) \otimes \Scal(\Vcal_{\A_\fin})$. For $h \in \SO_{\Vcal}(\A_\kbf)$, we have  \begin{align}
    (\omega(1,h)\varphi)(v) & =\varphi(h^{-1}v).
\end{align}
Suppose we can decompose the Schwartz function as $\varphi= \otimes_w \varphi_w$ where $\varphi_{w}$ is in $\Scal(\Vcal_w)$ and the product is over places of $\kbf$. Let us write down the local Weil representation of $\SL_2(\kbf_w)$ on $\varphi_w$. 
If $g=\begin{pmatrix}
        a & 0 \\ 0 & a^{-1}
    \end{pmatrix}$ for some $a \in \kbf_w^\times$, then
\begin{align}
    (\omega(g,1)\varphi)(v) & = \vert a \vert_w^{m} \varphi_w(a v).
\end{align}
If $g=\begin{pmatrix}
        1 & b \\ 0 & 1
    \end{pmatrix}$ for some $b \in \kbf_w$, then
\begin{align}
    (\omega(g,1)\varphi)(v) & = {\chi_w \left (\frac{b\Qcal(v,v)}{2} \right )}\varphi(v).
\end{align}
Finally, if $S=\begin{pmatrix}
        0 & -1 \\ 1 & 0
    \end{pmatrix}$, then
\begin{align}
    (\omega(S,1)\varphi)(v) & =  \int_{\Vcal_{w}} \varphi_w(u)\chi_w(\Qcal(v,u))du_w.
\end{align}
\begin{rmk} The character $\chi_w$ is defined as follows. We fix the additive character $e_v$ on $\Q_v$ defined by
\begin{align}
e_v(x) \coloneqq  \begin{cases} e^{2i\pi x} & \textrm{if} \; v=\8 \\
e^{-2i\pi \{x\}_p} & \textrm{if} \; v=p,
\end{cases}
\end{align}
where $\{x\}_p$ is the fractional part of $x$ in $\Q_p$. We extend it to a character $\chi_w$ on $\kbf_w$ by setting $\chi_w \coloneqq e_v \circ \Tr_{\kbf_w \mid \Q_v}$. The Haar measure $du_w$ is the unique Haar measure on $\kbf_w$ which is self dual with respect to $\chi_{w}$. This is the Haar measure normalized such that the Fourier inversion holds.
\end{rmk}

\subsection{(Co)homology of adelic spaces} \label{subsecadelic} For every real place $\sigma$ of $\kbf$, let $\Vcal_\sigma \coloneqq \Vcal \otimes_\sigma \R$ and $\Vcal_\R \coloneqq \bigoplus \Vcal_\sigma$. It is a real quadratic space, and let $(p_\sigma,q_\sigma)$ be its signature. Let $\D_{\sigma}^{+}$ be the associated connected symmetric space, that can be described as one of the two connected { components} of the Grassmanian $\D_\sigma$ of $q_\sigma$-dimensional oriented subspaces $z \subset \Vcal_\sigma$ that are negative, {\em i.e.} $\restr{\Qcal}{z}<0$. We set $\D^+=\prod \D_{\sigma}^{+}$, where the product ranges over the archimedean places of $\kbf$ for which $p_\sigma q_\sigma$ is nonzero\footnote{In the case where $p_\sigma=0$ or $q_\sigma=0$, the manifold $\D_{\sigma}^{+}$ is just a point.}. The dimension of $\D^+_\sigma$ is $p_\sigma q_\sigma$, so that the dimension of $\D^+$ is $\sum_\sigma p_\sigma q_\sigma$. At every place $\sigma$ let $\kbf_\sigma$ be the completion of $\kbf$, so that $\kbf_\8 \simeq \prod_\sigma \kbf_\sigma$. Since we assumed that $\kbf$ only has real places, we have $\kbf_\sigma \simeq \R$. At the level of the orthogonal group, we have $\SO_{\Vcal}(\kbf_\8) \simeq { \prod_{\sigma} \SO_{\Vcal}(\kbf_\sigma)}$, where $\SO_{\Vcal}(\kbf_\sigma) \simeq \SO(p_\sigma,q_\sigma)$. The connected component $\SO_{\Vcal}(\kbf_\8)^+ \simeq \prod_\sigma \SO(p_\sigma,q_\sigma)^+$ of the identity acts transitively on $\D^+$. Let $K_\8$ be a maximal connected compact subgroup of $\SO_{\Vcal}(\kbf_\8)^+$, that is isomorphic to  $ \prod_{\sigma}\SO(p_\sigma) \times \SO(q_\sigma)$. Hence 
\begin{align}
    \D^+ \simeq \SO_{\Vcal}(\kbf_\8)^+/K_\8 \simeq \prod_\sigma \SO(p_\sigma,q_\sigma)^+/ \SO(p_\sigma) \times \SO(q_\sigma).
\end{align}
\begin{rmk} Note that when $\kbf=\Q$ and $\Vcal=V$, then for the unique real place we have $\D_{\sigma}=\D$, where 
\begin{align}
    \D=\left \{ \left (z,o) \right \vert z \subset V_\R, \ \dim(z)=q, \ \restr{Q}{z}<0, \ o \textrm{ an orientation of} \ z \right \} \subset \operatorname{Gr}_q(V_\R)
\end{align}
is the space { mentioned} in the introduction. It has two connected components (corresponding to the two choices of orientations) and $\D^+$ is one of them.   
\end{rmk}
\subsubsection{}Let $\varphi_\fin \in \Scal(\Vcal_{\A_{\kbf,\fin}})$ be a finite Schwartz function, let $K_\fin \subset \SO_{\Vcal}(\A_{\kbf,\fin})$ be an open compact preserving $\varphi_\fin$ in the sense that $\omega(k,1)\varphi_\fin=\varphi_\fin$. Let $K \coloneqq K_\8 K_\fin$ and consider the adelic space
\begin{align}
   Y=\SO_{\Vcal}(\kbf)\backslash \SO_{\Vcal}(\A_\kbf)/K \simeq \SO_{\Vcal}(\kbf)\backslash \D \times \SO_\Vcal(\A_{\kbf,\fin})/K_\fin.
\end{align}
Let $\Ccal$ be the finite group of double cosets
\begin{align}
    \Ccal \coloneqq \SO_\Vcal(\kbf)^+\backslash \SO_\Vcal(\A_{\kbf,\fin})/K_\fin.
\end{align}
Then we have
\begin{align}
 \SO_\Vcal(\A_{\kbf,\fin}) = \bigsqcup_{I \in \Ccal} \SO_\Vcal(\kbf)^+ h_I K_\fin.    
\end{align}
where $h_I \in \SO_\Vcal(\A_{\kbf,\fin})$ are representants. The adelic space is a finite union of locally symmetric spaces
\begin{align}
Y\coloneqq \bigsqcup_{I \in \Ccal} \Gamma_{I} \backslash \D^+   
\end{align}
where $\Gamma_{I} \coloneqq h_I K_\fin h_I^{-1} \cap \SO_\Vcal(\kbf)^+$.

\subsubsection{} The space of differential $r$-forms on $Y$ is defined by
\begin{align}
    \Omega^r(Y)\coloneqq \bigoplus_{I \in \Ccal} \Omega^r(\Gamma_I \backslash \D^+).
\end{align} Let $C^\8(\SO_\Vcal(\A_{\kbf,\fin}))$ be the space of locally constant functions on $\SO_\Vcal(\A_{\kbf,\fin})$. The map
\begin{align} \label{isoform2}
    \left [ \Omega^r(\D^+) \otimes_\Q C^\8(\SO_\Vcal(\A_{\kbf,\fin})) \right ]^{\SO_{\Vcal}(\kbf) \times K_\fin}  \longrightarrow \Omega^r(Y) \end{align}
sending $\eta \otimes f$ to $\sum_I f(h_I)\eta$ is an isomorphism, where $C^\8(\SO_\Vcal(\A_{\kbf,\fin}))$ is the space of locally constant functions. We define the homology and cohomology of $Y$ by
\begin{align}
 H^r(Y)\coloneqq \bigoplus_{I \in \Ccal} H^r(\Gamma_I \backslash \D^+), \quad   H_r(Y)\coloneqq \bigoplus_{I \in \Ccal} H_r(\Gamma_I \backslash \D^+),
\end{align}
and similarly for the compactly supported cohomology. The integral of a closed form $\eta=\sum_{I \in \Ccal} \eta_I$ over a cycle $C=\sum_{I \in \Ccal} C_{I}$ in $Y$ is then defined by
\begin{align}
    \int_C\eta=\sum_{I \in \Ccal} \int_{C_I}\eta_I.
\end{align} 
This pairing induces the Poincaré duality $H_r(Y) \simeq H^r(Y)^\vee \simeq H^{\dim(Y)-r}_c(Y)$.

For top degree forms, when $r=\dim(Y)$, the choice of an orientation $\varrho$ gives an isomorphism
\begin{align} \label{isoform1}
    C^\8\left (\SO_\Vcal(\kbf_\8)^+ \right )^{K_\8} \longrightarrow \Omega^{\dim(Y)}(\D^+)
\end{align}
that sends a smooth $K_\8$-invariant function $f$ to a top degree form $f\varrho$. Combining the two isomorphisms \eqref{isoform2} and \eqref{isoform1}, we get the following isomorphism for top degree forms
\begin{align}\label{topdegreeiso}
 C^\8\left (\SO_\Vcal(\kbf)\backslash \SO_\Vcal(\A_\kbf) \right)^{K}   \longrightarrow \Omega^{\dim(Y)}(Y), 
\end{align}
sending a function $f$ to the form $\sum_{I \in \Ccal} f(\cdot,h_I)\varrho$. When $\eta$ is a top degree form, we can consider the integral over $Y$. Suppose that the form $\eta \in \Omega^{\dim(Y)}(Y)$ { corresponds} to a function $f$ in the isomorphism \eqref{topdegreeiso}. Then
\begin{align}
    \int_{Y}\eta = \frac{c}{\vol(K)}\int_{\SO_\Vcal(\kbf)\backslash \SO_\Vcal(\A_\kbf)}f(g)dg
\end{align}
where $\vol(K)$ is the volume of $K$ with respect to a Haar mesure $dg$ on $\SO_{\Vcal}(\A_\kbf)$, and $c>0$ is a constant dependant on the choice of the Haar measure but independant of $K$. We suppose that the Haar measure is chosen such that $c=1$.
\subsection{Kudla-Millson theta lift} \label{subseckm}
Let ${ (p,q) =\sum_\sigma (p_\sigma,q_\sigma)}$ be the signature of $\Vcal_\R = \bigoplus \Vcal_\sigma$. In \cite{km2,km3} Kudla and Millson define a form \begin{align}
    \varphi_{\KM} \in \Omega^{q}(\D^+,\Scal(\Vcal_\R))^{\SO_\Vcal(\kbf_\8)^+} \simeq \left [\Omega^{q}(\D^+) \otimes \Scal(\Vcal_\R) \right ]^{\SO_\Vcal(\kbf_\8)^+}
\end{align} valued in the Schwartz space $\Scal(\Vcal_\R)$. More precisely, at every place $\sigma$ there is a form $\varphi_{\KM}^{\sigma} \in \Omega^{q_\sigma}(\D_\sigma^+,\Scal(\Vcal_\sigma))^{\SO_\Vcal(\kbf_\sigma)^+}$ such that $\varphi_{\KM}(v)=\bigwedge_{\sigma=1}^k \varphi_{\KM}^{\sigma}(\sigma(v))$. These forms (hence also $\varphi_{\KM}$) are closed {\em i.e.} $d\varphi_{\KM}^{\sigma}(v)=0$ for any $v$ in $\Vcal$, and $\SO_\Vcal(\kbf_\sigma)^+$-invariant in the sense that 
\begin{align}
    h^\ast \varphi_{\KM}^{\sigma}(v)=\varphi_{\KM}^{\sigma}(h^{-1}v)
\end{align} for any $h$ in $\SO_\Vcal(\kbf_\sigma)^+$ and $v$ in $\Vcal$. This extends to the invariance property
\begin{align} \label{invariance}
    h^\ast \varphi_{\KM}(v)=\varphi_{\KM}(h^{-1}v)
\end{align} for any $h$ in $\SO_\Vcal(\kbf_\8)^+$.
In particular, the Kudla-Millson form is $\SO_{\Vcal}(\kbf)^+$-invariant. Furthermore, it satifies
\begin{align}
\omega(k_\theta)\varphi_{\KM}=\prod_\sigma e^{i \theta_\sigma m} \varphi_{\KM} 
\end{align} where { $m=\dim(\Vcal)$} and
\begin{align}
    k_\theta= \left (\begin{pmatrix}
    \cos(\theta_\sigma) & \sin(\theta_\sigma) \\ -\sin(\theta_\sigma) & \cos(\theta_\sigma)
\end{pmatrix} \right )_\sigma \in \SO(2)^k.
\end{align}
One of the main features of the Kudla-Millson form is its Thom form property. We will come back to this in  Subsection \ref{sec:genseries}.
\subsubsection{} Let us now define the Kudla-Millson theta series. Let $\varphi_\fin \in \Scal(\Vcal_{\A_{\kbf,\fin}})$ be the same Schwartz function as above, and let $K'_\fin \subset \SL_2(\A_{\kbf,\fin})$ be an open compact satisfying $\omega(1,k')\varphi_\fin(v)=\varphi_\fin(v)$ for every $k' \in K'_\fin$. Hence, the Schwartz function $\varphi_\fin$ is $K_\fin \times K'_\fin$ invariant by the Weil representation. We define
\begin{align}
    \varphi \coloneqq \varphi_{\KM} \otimes \varphi_\fin \in \Omega^q(\D^+) \otimes \Scal(V_{\A_\kbf})
\end{align}
For $g \in \SL_2(\A_\kbf)$ and $h_\fin \in \SO_\Vcal(\A_{\kbf,\fin})$, the Kudla-Millson theta series is
\begin{align} \label{thetakernel}
    \Theta_\varphi(g,h_\fin) \coloneqq  \sum_{v \in \Vcal} (\omega(g,h_\fin)\varphi )(v) \in C^\8 \left ( \SL_2(\kbf)\backslash \SL_2(\A_\kbf)\right ) \otimes \Omega^q(Y).
\end{align}
When $g$ is fixed, we can view 
\begin{align}
    \Theta_\varphi(g) \coloneqq \Theta_\varphi(g, \cdot) \in \Omega^q(Y)
\end{align}
as a differential form on $Y$. Let $\Gamma'_\kbf \coloneqq \SL_2(\kbf) \cap K'_\fin$. For a point $(\tau_1, \dots, \tau_k)\in \HH^k$ let $g_{\8}=(g_{\tau_1}, \dots, g_{\tau_k}) \in \SL_2(\R)^k$ where
\begin{align}
    g_{\tau_\sigma} = \begin{pmatrix}
        \sqrt{y_\sigma} & \nicefrac{x_\sigma}{\sqrt{y_\sigma}} \\ 0 & \nicefrac{1}{\sqrt{y_\sigma}}
    \end{pmatrix} \in \SL_2(\R),
\end{align}
and $\tau_\sigma=x_\sigma+iy_\sigma$. Let
\begin{align} \label{kmlift2}
    \Theta_\varphi(\tau_1, \dots, \tau_k) & \coloneqq (y_1 \cdots y_k)^{-\frac{m}{2}} \Theta_\varphi(g_\8,\cdot) \in \Omega^q(Y),
\end{align}
where we recall that $m=\dim(\Vcal)$. By the work of Kudla and Millson - building on Weil's construction of automorphic forms - the form $\Theta_\varphi$ transforms like a Hilbert modular form of parallel weight $m$ and level $\Gamma'_\kbf \subset \SL_2(\kbf)$, in the variables $\tau_1, \dots, \tau_k$. They also show that the form is holomorphic in cohomology, in the sense that for every $\sigma$ we have
\begin{align}
 \frac{\partial}{\partial \bar{\tau}_\sigma}\Theta_\varphi(\tau_1, \dots, \tau_k)=d \eta_\sigma   
\end{align}
for some $\eta_\sigma \in \Omega^{q-1}(Y)$. Furthermore, the form $\Theta_\varphi$ is closed, since $\varphi_{\KM}$ is. Hence, we can now view $\Theta_\varphi$ as an element 
\begin{align}
    \Theta_\varphi \in H^q(Y) \otimes M_{(m, \cdots ,m)}(\Gamma'_\kbf )
\end{align}
where $M_{(m, \cdots ,m)}(\Gamma'_\kbf )$ is the space of Hilbert modular forms of parallel weight $m$ and level $\Gamma'_\kbf \subset \SL_2(\kbf)$.
In particular, if $C \in \Zcal_q(Y)$ is a cycle, then 
\begin{align} \label{kmlift1}
    \Theta_\varphi(\tau_1, \dots, \tau_k,C) & = \int_{C} \Theta_\varphi(\tau_1, \dots, \tau_k) \nonumber \\
   & = \sum_{I \in \Ccal}\int_{C_I} \Theta_\varphi(\tau_1, \dots, \tau_k,h_I) \in M_{(m, \cdots ,m)}(\Gamma'_\kbf ).
\end{align}
Equivalently, if $\eta_C \in \Omega_c^{\dim(Y)-q}(Y)$ is a compactly supported form of complementary degree that is a Poincaré dual to $C$, then 
\begin{align} 
    \Theta_\varphi(\tau_1, \dots, \tau_k,C) = \int_{Y} \Theta_\varphi(\tau_1, \dots, \tau_k) \wedge \eta_C \in M_{(m, \cdots ,m)}(\Gamma'_\kbf ).
\end{align}
\subsection{Restriction of scalars and seesaw} 
 We can now consider the theta lift described in the previous paragraph in two cases.
 
 \begin{rmk}[Remark on the notation] In section \ref{subsecweil} we introduced  the notations $\D$, $Y$ and $\Ccal$ attached to the quadratic space $\Vcal$ where $\Vcal$ is either the quadratic space $V$ over $\Q$ or the quadratic space $E$ over $F$. In the later case, we will replace the notation by $\D_T$, $Y_T$ , $\Ccal_T$ and $K_T$, as used in the introduction. { We will use the notations $\Theta_\varphi^V$ and $\Theta_\varphi^T$ if we need to emphasize the underlying dual pair.}
\end{rmk}

\subsubsection{} First, let us consider the case where $\kbf=\Q$ and $\Vcal=V$. The symplectic space is $\Wcal \simeq \Q^{4d}$ (recall that the degree of $F$ is $d$). We have the Weil representation
\begin{align}
 \omega_V \colon \SL_2(\A) \times \SO_V({\A}) \longrightarrow \Scal(V_\A).   
\end{align}
Let $\varphi_\fin \in \Scal(V_{\A_\fin})$ a finite Schwartz function as previously. Let $\varphi=\varphi_{\KM}^V \otimes \varphi_\fin$, where $\varphi_{\KM}^V \in \Omega^{q}(\D^+)$ is the Kudla-Millson form on $\D^+$. In this setting, the Kudla-Millson lift is
\begin{align} \label{lift3}
    \Theta_{\varphi} \colon H_q(Y) \longrightarrow M_{d}(\Gamma'_\Q),
\end{align}
for some $\Gamma'_\Q \subset \SL_2(\Q)$ depending on $\varphi_\fin$.
\subsubsection{}   
Let us now consider the Kudla-Millson lift for the quadratic space $(\Vcal,\Qcal)=(E,Q_\alpha)$. Let us consider subsections \ref{subsecweil} and \ref{subsecadelic} in the setting of this quadratic space of dimension $m=2$ over the totally real field $\kbf=F$ of degree $k=d$. Recall that $\Tbf(\Q) \simeq E^1 \simeq \SO_E(F)$. 

The symplectic space is $\Wcal \simeq F^4$, and we have a representation
\begin{align}
 \omega_E \colon \SL_2(\A_F) \times \SO_E(\A_F) \longrightarrow \Scal(\A_E).   
\end{align}
Let $\Tbf(\R)^+ \simeq \SO_E(F_\R)^+$ be the connected components of its real points. Since $\Tbf$ is of maximal real rank, we have $E_\8^{1,+} \simeq \Tbf(\R)^+\simeq (\R_{>0})^q \times (S^1)^\frac{p-q}{2}$ where $S^1$ is the unit circle. The maximal compact subgroup in $\Tbf(\R)^+$ is $K_{T,\8} \simeq (S^1)^\frac{p-q}{2}$, hence
\begin{align}
    \D_T^+=\Tbf(\R)^+ / K_{T,\8} \simeq (\R_{>0})^q.
\end{align}

We can view the Schwartz function $\varphi_\fin \in \Scal(V_{\A_\fin})$ used for the lift \eqref{lift3} as a finite Schwartz function in $\varphi_\fin \in \Scal(\A_{E,\fin})$ and set $\varphi=\varphi_{\KM}^E \otimes \varphi_\fin$, where $\varphi_{\KM}^E \in \Omega^{q}(\D_T^+) \simeq \Omega^{q}(\R_{>0}^{q})$ is the Kudla-Millson form on $\D_T^+$.

Let $K_{T,\fin} \times K_{T,\fin}' \subset \SO_E(\A_{F,\fin}) \times \SL_2(\A_{F,\fin})$ be the open compacts stabilizing $\varphi$ under the Weil representation, and let $Y_T$ the locally symmetric space. 
Let $\Ccal_T$ be the finite group of double cosets
\begin{align}
    \Ccal_T \coloneqq \Tbf(\Q)^+\backslash \Tbf(\A_\fin)/K_{T,\fin}=E^{1,+} \backslash \A_{E,\fin}^1/K_{T,\fin}
\end{align}
where $E^{1,+}= E^1 \cap (E^1_\8)^+ \simeq \Tbf(\Q) \cap \Tbf(\R)^+$ is the intersection with the connected component of the identity. The space
\begin{align}
    Y_T=\Tbf(\Q)\backslash \Tbf(\A)/K_{T} \simeq E^1 \backslash E^1_\A/K_T \simeq E^1 \backslash \left ( (\R_{>0})^{q} \times \A_{E,\fin}^1 \right )/K_{T,\fin}
\end{align}
is a disjoint union of connected components
\begin{align} \label{cup1}
    Y_T=\bigsqcup_{I \in \Ccal_T} \Lambda \backslash \R_{>0}^{q}.
\end{align}
where $\Lambda \coloneqq \Tbf(\Q)^+ \cap K_{T,\fin}$. By \cite[Theorem.~4.11, p.~208]{pr}, the quotient $Y_{T}$ is compact since $\Tbf(\Q) \simeq E^1$ is $\Q$-anisotropic. Then $\dim(Y_T)={q}$ and the Kudla-Millson theta lift is
\begin{align} \label{lift1}
    \Theta^T_{\varphi}\colon H_{q}(Y_T) \longrightarrow M_{(1,\dots,1)}( \Gamma'_F),
\end{align}
where $\Gamma'_F \subset \SL_2(F)$.

We have $H_{q}(Y_T,\C)\simeq \C[\Ccal_T]$. Let $Y_{T,I}$ be the connected component corresponding to $I$ in \eqref{cup1} and $[Y_{T,I}] \in H_{q}(Y_{T,I})$ a fundamental class, {i.e.} a generator of $H_{q}(Y_{T,I},\Z) \simeq \Z$. Let $\chi_\fin$ be the finite part of the Hecke character and suppose that $\chi_\fin$ is trivial on $K_{T,\fin}$. Hence, we can view $\chi_\fin$ as a function on $\Ccal_T$ and define
\begin{align}
    [Y_\chi] \coloneqq \sum_{I \in \Ccal_T} \chi_\fin(I)Y_{T,I} \in H_{q}(Y_T,\C).
\end{align}
We define the image of this class to be
\begin{align} \label{integraldef}
    \Theta^T_{\varphi}(\tau_1, \cdots, \tau_{d},\chi) \coloneqq \int_{Y_\chi} \Theta^T_{\varphi}(\tau_1, \cdots, \tau_{d}) \in M_{(1,\dots,1)}( \Gamma'_F).
\end{align}
 Note that $\D_T^+ \simeq \SO_E(\R)^+ \simeq (\R_{>0})^{q}$. By \eqref{topdegreeiso}, the orientation $dt^\times \coloneqq dt_1^\times \wedge \cdots \wedge dt_{q}^\times$ of $\D_T^+$, where $dt_\sigma^\times=\frac{dt_\sigma}{t_\sigma}$, identifies
\begin{align}
    C^\8 \left ( \Tbf(\Q) \backslash \Tbf(\A) \right )^{K_T} \simeq \Omega^{q}(Y_T).
\end{align} Hence, we can write
\begin{align} \label{lift2}
    \Theta^T_{\varphi}(\tau_1, \cdots, \tau_{d})= \thetil^T_\varphi(\tau_1, \cdots, \tau_d,t)dt^\times \in \Omega^{q}(Y_T)
\end{align}
for some smooth function $\thetil_{\varphi}(\tau_1, \cdots, \tau_d, \cdot ) \in C^\8(\Tbf(\Q) \backslash \Tbf(\A))^{K_T}$. We can also write \eqref{integraldef} as
\begin{align}
    \Theta^T_{\varphi}(\tau_1, \cdots, \tau_{d},\chi) = \frac{1}{\vol(K_{T})}\int_{\Tbf(\Q) \backslash \Tbf(\A)} \thetil^T_\varphi(\tau_1, \cdots, \tau_{d}, t ) \chi(t) dt^\times \in M_{(1,\dots,1)}( \Gamma'_F),
\end{align}
where $\vol(K_{T})$ is the volume with respect to the Haar measure $dt^\times$ on $\Tbf(\A) \simeq E^1_\A$.

\subsubsection{} The seesaw identity is a way to relate the lifts \eqref{lift1} and \eqref{lift3}. By restriction of scalars from $F$ to $\Q$, we can embedd $\Sp_{4}(F)$ in $\Sp_{4d}(\Q)$, as well as the subgroups forming the dual pair $\SL_2(\A_F) \times \SO_E(\A_F)$. The image of these two subgroups satisfy $\SL_2(\Q) \subset \SL_2(F)$ and $\SO_E(F) \subset \SO_V(\Q)$, where the latter inclusion is restriction of scalars. For the former embedding of $\SL_2(F)$ in $\Sp_{4d}(\Q)$, it is given by
\begin{align}
    \SL_2(F) & \hooklongrightarrow \Sp_{4d}(\Q) \nonumber \\
    \begin{pmatrix}
        a & b \\ c & d
    \end{pmatrix} & \longmapsto \begin{pmatrix}
        r(a) & r(b) \\
        r(c) & r(d)
    \end{pmatrix}
\end{align}
where $r \colon F^\times \longrightarrow \GL_d(\Q)$ is the regular representation. These two dual pairs form the seesaw
\[
\begin{tikzcd}[sep=large]
\SO_V(\Q) \arrow[dash, dr] & \SL_2(F) \\
\SO_E(F) \arrow[dash]{u} \arrow[dash, ru] & \SL_2(\Q). \arrow[dash]{u}
\end{tikzcd}
\]
We also had the pairs of open compact subgroups $K_{\fin} \times K_{\fin}' \subset \SO_V(\A_\fin) \times \SL_2(\A_\fin)$  and $K_{T,\fin} \times K_{T,\fin}' \subset \SO_E(\A_{F,\fin}) \times \SL_2(\A_{F,\fin})$ stabilizing $\varphi_\fin$. Suppose they are compatible with the embeddings in the sense that $K_{T,\fin} \subset K_{\fin} \cap \SO_E(\A_{F,\fin}) $ and $K_{\fin}' \subset K_{T,\fin}' \cap \SL_2(\A_\fin)$. After conjugating if necessary, we can assume that the maximal compacts $K_\8$ and $K_{T,\8}$ are chosen such that $K_\8 \cap \SO_V(\R)^+=K_{T,\8}$. Then the embedding of $\SO_E$ in $\SO_V$ induces an immersion
\begin{align}
    \phi \colon Y_T \longrightarrow Y.
\end{align}
This immersion induces a pullback in cohomology
\begin{align}
    \phi^\ast \colon H^{q}(Y,\C) \longrightarrow H^{q}(Y_T,\C),
\end{align}
and a pushforward in homology
\begin{align}
    \phi_\ast \colon H_{q}(Y_T,\C) \longrightarrow H_{q}(Y,\C).
\end{align}
On the other hand, the pullback by the diagonal inclusion of $\HH$ in $\HH^d=\HH \times \cdots \times \HH$ induces the diagonal restriction
\begin{align}
    \iota^\ast \colon M_{(1,\dots,1)}( \Gamma'_F) \longrightarrow M_{d}(\Gamma'_F \cap \SL_2(\Q)).
\end{align}
{  Note that $\Gamma'_\Q \subset \Gamma'_F \cap \SL_2(\Q)$ by the assumption on the compact groups $K_\fin'$ and $K_{T,\fin}'$.}
The seesaw identity relies on two observations. Firstly, the two Weil representations $\omega_V$ and $\omega_E$ agree on their smallest common subgroups of both dual pairs, namely $(\SO_E(F),\SL_2(\Q))$. Secondly, the Kudla-Millson form has the functorial property that $\restr{\varphi_{\KM}^V}{\D_T^+}=\varphi_{\KM}^E$, see \cite[Equation.~(5.10)]{rbrkmmq}. Hence, the two kernels
\begin{align}
    \Theta^V_{\varphi} \in C^\8 \left ( \HH\right ) \otimes \Omega^q(Y)
\end{align}
and 
\begin{align}
    \Theta^T_{\varphi} \in C^\8 \left ( \HH^d \right ) \otimes \Omega^q(Y_T)
\end{align}
for $E$ and $V$ respectively, agree when restricted to 
\begin{align}
    C^\8 \left (  \HH \right ) \otimes \Omega^{q}(Y_T).
\end{align}
We deduce the seesaw identity
\begin{align} \label{seesaw}
  \iota^\ast \Theta^T_{\varphi} = \phi^\ast \Theta^V_{\varphi}.
\end{align}
\subsubsection{} For a class $I$ in the class group $\Ccal_T$ of $\Tbf$ we define the toric cycle
\begin{align}
    C_{T,I} = \phi_\ast Y_{T,I} \in H_{q}(Y,\Z)
\end{align}
to be the pushforward of the fundamental class. We also define
\begin{align} \label{cycledef}
    C_\chi \coloneqq \phi_\ast Y_\chi=  \sum_{I \in \Ccal_T} \chi_\fin(I) C_{T,I} \in H_{q}(Y,\Z).
\end{align} We then have
\begin{align}
    \Theta^T_{\varphi}(\tau, \cdots, \tau,\chi) = \int_{Y_\chi} \Theta^T_{\varphi}(\tau, \cdots, \tau) = \int_{Y_\chi} \phi^\ast \Theta^V_\varphi(\tau) = \int_{\phi_\ast Y_\chi} \Theta^V_\varphi(\tau),
\end{align}
which gives the seesaw identity
\begin{align} \label{seesaw1}
    \Theta^T_{\varphi}(\tau, \cdots, \tau,\chi) = \int_{C_\chi} \Theta^V_{\varphi}(\tau).
\end{align}

\subsubsection{}For now we have only considered the case where $E$ is an extension of a field $F$, but we want to consider more general \'etale algebras of the form $E=E_1 \times \cdots \times E_r$, together with an involution fixing $F=F_1 \times \cdots \times F_r$. Here each of the $F_i$ is a totally real field of degree $d_i$, that is the fixed field of $E_i$. Suppose that $(V,Q)$ is the restriction of scalars of $(E,Q_\alpha)$ from $F$ to $\Q$, as in \ref{subsecetale}. Then $V$ is a vector space of dimension $2d$ where $d=d_1+\cdots +d_r$. Furthermore, we can split the quadratic space as $V=V_1 \oplus \cdots \oplus V_r$, where $V_i$ is the restriction of scalars of $E_i$ from the field $F_i$ to $\Q$. We can restrict the embedding $\SO_V(\Q) \hookrightarrow \Sp_{4d}(\Q)$ given in \ref{orthemb} to an embedding $\SO_{V_1}(\Q) \times \cdots \times \SO_{V_r}(\Q) \hookrightarrow \Sp_{4d}(\Q)$. Its commutator is $\SL_2(\Q)^r$ and we get a seesaw
\[
\begin{tikzcd}
\SO_V(\Q) \arrow[dash, dr] & \SL_2(\Q)^r \\
\SO_{V_1}(\Q) \times \cdots \times \SO_{V_r}(\Q) \arrow[dash]{u} \arrow[dash, ru] & \SL_2(\Q). \arrow[dash]{u}
\end{tikzcd}
\]
Combining with previous seesaw, we then get
\[
\begin{tikzcd}
\SO_V(\Q) \arrow[dash, dr] & \SL_2(F_1) \times \cdots \times \SL_2(F_r)  \\
\SO_{E_1}(F_1) \times \cdots \times \SO_{E_r}(F_1) \arrow[dash]{u} \arrow[dash, ru] & \SL_2(\Q) . \arrow[dash]{u}
\end{tikzcd}
\]

Let $\varphi_\fin \in \Scal(V_{\A_\fin})$ be a finite Schwartz function. Let $\varphi=\varphi^V_{\KM} \otimes \varphi_\fin$ and the associated theta lift
\begin{align}
    \Theta^V_\varphi \colon H^{q}(Y,\C) \longrightarrow M_d(\Gamma'_\Q).
\end{align}
We have $\Scal(V_{\A_\fin}) \simeq \Scal(\A_{E,\fin}) \simeq \Scal(\A_{E_1,\fin})\otimes \cdots \otimes \Scal(\A_{E_r,\fin})$ and so we can write $\varphi_\fin$ as a finite sum
\begin{align}
    \varphi_\fin=\sum_{\beta} \varphi_\fin^{\beta,1} \otimes \cdots \otimes \varphi_\fin^{\beta,r}
\end{align}
with $\varphi_\fin^{\beta,i} \in \Scal(\A_{E_i,\fin})$. Similarly, the character $\chi$ on $\Tbf(\Q) \backslash \Tbf(\A)$ can be written as $\chi=\chi_1 \times \cdots \times \chi_r$ where
\begin{align}
    \chi_i \colon \Tbf_i(\Q) \backslash \Tbf_i(\A) \longrightarrow \C^\times
\end{align}
and $\Tbf_i(\Q) \simeq E_i^1$. We set $\varphi^{\beta,i} = \varphi_{\KM}^{E_i} \otimes \varphi_\fin^{\beta,i}$, and as above we get a Hilbert modular form
\begin{align}
    \Theta_{\varphi^{\beta,i}}(\tau_1,\dots, \tau_{d},\chi_i) \in M_{(1,\dots,1)}(\Gamma'_i)
\end{align}
of parallel weight one for a congruence subgroup $\Gamma_i'$ of $\SL_2(F_i)$. We denote by
\begin{align}
    \Theta_{\varphi^{\beta,i}}(\tau^\Delta,\chi_i)=\Theta_{\varphi^{\beta,i}}(\tau, \dots,\tau,\chi_i) \in M_{d_i}(\Gamma'_i \cap \SL_2(\Q))
\end{align} its diagonal restriction, which is a modular form of weight $d_i$.

By the functoriality of the Kudla-Millson we have
\begin{align}
    \restr{\varphi_{\KM}^V}{Y_{E_1} \times \cdots \times Y_{E_r}}=\varphi_{\KM}^{E_1} \wedge \cdots \wedge \varphi_{\KM}^{E_r}.
\end{align}
We can deduce the following from Proposition \ref{seesaw1}.
\begin{prop} \label{splitting}
    We have the seesaw identity
    \begin{align}
    \int_{C_\chi} \Theta^V_{\varphi}(\tau)=\sum_{\beta} \Theta_{\varphi^{\beta,1}}(\tau^\Delta,\chi_1) \cdots \Theta_{\varphi^{\beta,r}}(\tau^\Delta,\chi_r).
    \end{align}
\end{prop}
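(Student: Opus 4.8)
The plan is to reduce the statement to the single--field seesaw identity \eqref{seesaw1} by combining three ingredients that are already available: the orthogonal direct--sum decomposition $V = V_1 \oplus \cdots \oplus V_r$, the multiplicativity of the Weil representation (hence of the theta kernel) along such a decomposition together with its compatibility with restriction of scalars, and the product functoriality of the Kudla--Millson form $\restr{\varphi_{\KM}^V}{Y_{E_1}\times\cdots\times Y_{E_r}} = \varphi_{\KM}^{E_1}\wedge\cdots\wedge\varphi_{\KM}^{E_r}$ recorded just above the proposition.

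First I would record the product structure on the torus side. Since $\Tbf = \Tbf_1\times\cdots\times\Tbf_r$ as algebraic $\Q$--groups with $\Tbf_i(\Q)\simeq E_i^1$, and the compact $K_{T,\fin}=\prod_i K_{T_i,\fin}$ can be taken to be a product, one gets $Y_T = Y_{T_1}\times\cdots\times Y_{T_r}$, $\Ccal_T=\prod_i\Ccal_{T_i}$, and under these identifications a class $I=(I_1,\dots,I_r)$ satisfies $Y_{T,I}=Y_{T_1,I_1}\times\cdots\times Y_{T_r,I_r}$ and $\chi_\fin(I)=\prod_i\chi_{i,\fin}(I_i)$; moreover the orientation $dt^\times$ on $\D_T^+=\prod_i\D_{T_i}^+$ is the wedge of the orientations on the factors. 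Consequently $[Y_\chi]=\sum_I\chi_\fin(I)[Y_{T,I}]$ is, up to reordering the factors, the exterior product of the cycles $[Y_{\chi_i}]\in H_{q_i}(Y_{T_i},\C)$, with $q=q_1+\cdots+q_r$.

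Next I would pull the theta kernel back along $\phi\colon Y_T\hookrightarrow Y$ and split it. By the seesaw identity \eqref{seesaw} we have $\phi^\ast\Theta_\varphi(\tau)=\Theta_\varphi(\tau,\dots,\tau)$, and using the agreement of $\omega_V$ with $\omega_{E_1}\otimes\cdots\otimes\omega_{E_r}$ on the common subgroup $\bigl(\prod_i\SO_{E_i}(F_i)\bigr)\times\SL_2(\Q)$ (the diagonal $\SL_2(\Q)\subset\prod_i\SL_2(F_i)$), the decomposition $\varphi_\fin=\sum_\beta\varphi_\fin^{\beta,1}\otimes\cdots\otimes\varphi_\fin^{\beta,r}$, and the product functoriality of the Kudla--Millson form, the restriction to $Y_T$ becomes
\begin{align*}
\restr{\phi^\ast\Theta_\varphi(\tau)}{Y_T}=\sum_\beta \Theta_{\varphi^{\beta,1}}(\tau^\Delta)\wedge\cdots\wedge\Theta_{\varphi^{\beta,r}}(\tau^\Delta),
\end{align*}
where $\Theta_{\varphi^{\beta,i}}(\tau^\Delta)$ is the diagonal restriction on $Y_{T_i}$ of the Hilbert theta form for $E_i$; here one checks that the archimedean normalisation factors combine correctly, namely $\prod_i y^{-d_i/2}=y^{-d/2}$ since $d=d_1+\cdots+d_r$, and that the splitting of the sum $\sum_{v\in V}$ over $v=(v_1,\dots,v_r)$ with $v_i\in V_i=E_i$ matches the wedge structure. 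Integrating over $[Y_\chi]$ and applying Fubini to a wedge of pulled--back top forms on the product of compact manifolds $Y_T=\prod_i Y_{T_i}$ then gives $\int_{[Y_\chi]}\bigwedge_i\Theta_{\varphi^{\beta,i}}(\tau^\Delta)=\prod_i\int_{[Y_{\chi_i}]}\Theta_{\varphi^{\beta,i}}(\tau^\Delta)=\prod_i\Theta_{\varphi^{\beta,i}}(\tau^\Delta,\chi_i)$; summing over $\beta$ and invoking \eqref{seesaw1} to identify the integral over $[Y_\chi]$ with $\int_{C_\chi}\Theta_\varphi(\tau)$ finishes the proof.

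I expect the main obstacle to be the bookkeeping in the middle step: carefully matching the Weil--representation splitting, the decomposition $v=(v_1,\dots,v_r)$ of the theta sum over $V=\bigoplus V_i$, and the wedge--product functoriality of $\varphi_{\KM}^V$ — in particular getting the orientations, the order of the wedge factors, and the Gaussian/normalisation factors $(y_1\cdots y_{d_i})^{-1/2}$ to assemble correctly into the single factor $y^{-d/2}$. Once the restriction formula $\restr{\phi^\ast\Theta_\varphi(\tau)}{Y_T}=\sum_\beta\bigwedge_i\Theta_{\varphi^{\beta,i}}(\tau^\Delta)$ is in hand, the remaining integration is a routine Fubini computation on $Y_T=\prod_i Y_{T_i}$, whose compactness comes from the $\Q$--anisotropy of $\Tbf$ (no factor $E_i$ is split).
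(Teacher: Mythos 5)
Your proposal is correct and follows essentially the same route as the paper, which deduces the proposition from the single-field seesaw identity \eqref{seesaw1}, the decomposition $\varphi_\fin=\sum_\beta\varphi_\fin^{\beta,1}\otimes\cdots\otimes\varphi_\fin^{\beta,r}$, and the product functoriality $\restr{\varphi_{\KM}^V}{Y_{E_1}\times\cdots\times Y_{E_r}}=\varphi_{\KM}^{E_1}\wedge\cdots\wedge\varphi_{\KM}^{E_r}$. In fact the paper states the proposition without a written proof, so your write-up supplies exactly the bookkeeping (product structure of $Y_T$ and of the cycle $[Y_\chi]$, splitting of the theta sum over $V=\bigoplus V_i$, matching of the normalisation factors, and the final Fubini step) that the paper leaves implicit.
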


\subsection{Computations of the Hilbert modular form} \label{specialcases}
Let us compute the Kudla-Millson form
\begin{align}
    \varphi^E_{\KM}=\restr{\varphi^V_{\KM}}{\D^+_T} \in \Omega^{q}(\D^+_T).
\end{align}
Suppose that $E$ is a field. Since $\Tbf$ is of maximal real rank, $S_3$ is empty by Proposition \ref{rankcondi}. Recall that $q=\vert S_1 \vert$ by Proposition \ref{rankcondi}.  At a place $\sigma \in S_2$, the algebra is $E_\sigma \simeq \C$ and the space $\D_\sigma^+$ is a point. So the Kudla-Millson form is of degree $0$ and is simply the Gaussian
\begin{align}
    \varphi_{\KM}^\sigma(v)=e^{-\pi \vert \alpha_\sigma \vert \N_{E_\sigma/F_\sigma}(v)} \in \Omega^{0}(\D_\sigma^{+}).
\end{align}
At a place $\sigma \in S_1$, the algebra is $E_\sigma \simeq \R \times \R$ and $\D_\sigma^+ \simeq \R_{>0}$. An element $v$ in $E_\sigma$ is sent to $(v_\sigma,v'_\sigma)$ in $E_\sigma$. The Kudla-Millson form (see \cite{rbrkmmq}) is given by
\begin{align}
    \varphi^\sigma_{\KM}(v)= \sqrt{\vert \alpha_\sigma \vert} e^{-\pi \vert \alpha_\sigma \vert \left ( (v_\sigma t_\sigma ^{-1})^2+(t_\sigma v_\sigma')^2\right )} \left ( \frac{v_\sigma}{t_\sigma}+t_\sigma v_\sigma' \right ) \frac{dt_\sigma}{t_\sigma} \in \Omega^{1}(\D_\sigma^{+})
\end{align}
for $v \in E_\sigma$.
\subsubsection{}  We define a Schwartz function $\varphi_\8 \in \Scal(E_\R)$ by $\varphi_\8=\prod_\sigma \varphi_\sigma$ where $\varphi_\sigma=\varphi_{\KM}^\sigma$ when $\sigma$ is in $S_2$, and 
 \begin{align}
    \varphi_\sigma(x,x') \coloneqq \sqrt{\vert \alpha_\sigma \vert} e^{- \pi \vert \alpha_\sigma \vert \left ( x^2+(x')^2\right )} \left ( x+x' \right ).
\end{align}
when $\sigma$ is in $S_1$. Then, under the isomorphism $C^\8(\R_{>0}^q) \simeq \Omega^q(\D_T^+)$, we
\begin{align}\label{equality1024}
    (\omega(g_{\8},1)\varphi^E_{\KM})(v)= (\omega(g_{\8},t_\8)\varphi_\8)(v) dt^\times_\8
\end{align}
for $g_\8 \in \SL_2(F_\R)$. Note that since $E$ is a field, any nonzero $v \in E$ must satisfy $v_\sigma v_\sigma'\neq 0$ at any place $\sigma \in S_1$.
Let $\tau \coloneqq (\tau_\sigma)_\sigma \in \HH^{d}$ be the image of $(i, \dots, i)$ by $g_\8$ and write
\begin{align}
 \Tr_{F/\Q}(\tau m) \coloneqq \sum_\sigma \tau_\sigma m_\sigma   
\end{align}
for $m \in F$. { The following Lemma is a classical computation. A similar computation appears in Kudla's work \cite{Kudla1981}, and even Hecke's original paper \cite{hecke}.
}
\begin{lem}
Suppose that $E/F$ is a field extension. We have
    \begin{align*}
        \int_{E_\8^{1}} \omega(g_{\8},t_\8)\varphi_\8(v) \chi_\8(t)dt_\8^\times = 2^q\left ( y_1 \cdots y_d\right )^{\frac{1}{2}} \left (\prod_{\sigma \in S_1} \sgn(v_\sigma+v'_\sigma) \right ) e^{2 i \pi \Tr_{F/\Q}(\tau \alpha m)}
    \end{align*}
    if $m=\N_{E/F}(v)$ is totally positive, and the integral is $0$ otherwise. 
\end{lem}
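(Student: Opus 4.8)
Everything in sight factors over the archimedean places $\sigma$ of $F$: one has $E_\8^1 = \prod_\sigma E_\sigma^1$, $\varphi_\8 = \bigotimes_\sigma\varphi_\sigma$, $\chi_\8 = \prod_\sigma\chi_\sigma$, $g_\8 = (g_{\tau_\sigma})_\sigma$, and the archimedean Weil representation is the product of its local factors. By Proposition \ref{rankcondi} the set $S_3$ is empty, so $\sigma$ runs over $S_1\sqcup S_2$ with $|S_1| = q$, and $\sigma(\alpha) > 0$ for every $\sigma\in S_2$ (since $s_\alpha = 0$); we may moreover assume $\alpha$ totally positive, since replacing $\alpha$ by a totally positive representative of its class in $F^\times/\N_{E/F}(E^\times)$ changes neither the torus $\SO_E$ nor the isometry class of $Q_\alpha$ (such a representative exists by the Hasse norm theorem, the local norm groups at the split real places being all of $\R^\times$). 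The integral is then $\prod_\sigma I_\sigma$ with
\begin{align*}
 I_\sigma = \int_{E_\sigma^1} \bigl(\omega(g_{\tau_\sigma},t_\sigma)\varphi_\sigma\bigr)(v_\sigma)\,\chi_\sigma(t_\sigma)\,dt_\sigma^\times,
\end{align*}
and the plan is to evaluate each $I_\sigma$ using the factorization $g_{\tau_\sigma} = n(x_\sigma)a(\sqrt{y_\sigma})$ and the explicit formulas of Section \ref{subsecweil} (with $m=1$), and then to multiply.

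\textbf{Complex places.} For $\sigma\in S_2$ one has $E_\sigma = \C$, $E_\sigma^1 = S^1$, the character $\chi_\sigma$ is trivial, and $\varphi_\sigma(v) = e^{-\pi\sigma(\alpha)|v|^2}$ is invariant under the rotation action of $S^1$; hence $I_\sigma = \vol(S^1)\cdot(\omega(g_{\tau_\sigma},1)\varphi_\sigma)(v_\sigma)$, and applying the formulas for $\omega(a(\sqrt{y_\sigma}))$ and $\omega(n(x_\sigma))$ gives, after normalizing the Haar measure so that $\vol(S^1) = 1$,
\begin{align*}
 I_\sigma = \sqrt{y_\sigma}\,e^{2\pi i\tau_\sigma\sigma(\alpha)|v_\sigma|^2},
\end{align*}
with $|v_\sigma|^2 = \sigma(\N_{E/F}(v)) > 0$; no sign condition arises at these places.

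\textbf{Split places (the main point).} For $\sigma\in S_1$ write $(v,v')$ for the image of $v$ in $E_\sigma \cong \R\times\R$; then $E_\sigma^1 = \{(t,t^{-1})\,:\,t\in\R^\times\}$, $\chi_\sigma = \sgn$, and $\omega(1,(t,t^{-1}))\varphi_\sigma(v,v') = \varphi_\sigma(t^{-1}v,tv')$. After applying $a(\sqrt{y_\sigma})$ and then $n(x_\sigma)$, the integrand of $I_\sigma$ is $y_\sigma\sqrt{|\alpha_\sigma|}$ times
\begin{align*}
 e^{-\pi|\alpha_\sigma|y_\sigma((t^{-1}v)^2 + (tv')^2)}\,(t^{-1}v + tv')\,e^{2\pi ix_\sigma\sigma(\alpha)vv'}\,\sgn(t)\,\frac{dt}{|t|}.
\end{align*}
The substitution $t\mapsto -t$ negates both $t^{-1}v + tv'$ and $\sgn(t)$, so the parts over $\{t<0\}$ and $\{t>0\}$ coincide; this yields a factor $2$ at each place of $S_1$ — hence $2^q$ in total — and reduces $I_\sigma$ to twice an integral over $\R_{>0}$. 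Because $E$ is a field, $v$ and $v'$ are nonzero, so one may substitute $t = |v/v'|^{1/2}e^w$: the exponent becomes $-2\pi|\alpha_\sigma|y_\sigma|vv'|\cosh(2w)$ and the linear factor becomes $|vv'|^{1/2}(\sgn(v)e^{-w} + \sgn(v')e^w)$. If $vv' < 0$ then $\sgn(v) = -\sgn(v')$, the linear factor is an odd multiple of $\sinh(w)$, and its integral against the even Gaussian vanishes, so $I_\sigma = 0$; as this is the only source of vanishing and it occurs precisely when some $\sigma(m) = v_\sigma v'_\sigma$ is negative (impossible at a place of $S_2$), the whole integral is $0$ unless $m = \N_{E/F}(v)$ is totally positive. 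If $vv' > 0$ the linear factor is $2\sgn(v)\cosh(w)$, and the substitution $p = \sinh(w)$, $\cosh(2w) = 1 + 2p^2$, turns the remaining integral into $\int_{\R}e^{-2\pi|\alpha_\sigma|y_\sigma vv'(1 + 2p^2)}\,dp$. Collecting the prefactors — the $2$ from the splitting, the powers of $y_\sigma$ from $\omega(a(\sqrt{y_\sigma}))$, the normalizing $\sqrt{|\alpha_\sigma|}$ of the Kudla--Millson form, the oscillatory factor from $\omega(n(x_\sigma))$, and the $(|\alpha_\sigma|y_\sigma vv')^{-1/2}$ coming out of the Gaussian integral — everything collapses to
\begin{align*}
 I_\sigma = 2\sgn(v_\sigma + v_\sigma')\,\sqrt{y_\sigma}\,e^{2\pi i\tau_\sigma\sigma(\alpha)v_\sigma v_\sigma'},
\end{align*}
using $\sgn(v) = \sgn(v') = \sgn(v + v')$ when $vv' > 0$ and $v_\sigma v_\sigma' = \sigma(m)$.

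\textbf{Conclusion.} Multiplying the $I_\sigma$ over the $d = |S_1| + |S_2|$ archimedean places of $F$ gives
\begin{align*}
 2^q\,(y_1\cdots y_d)^{1/2}\Bigl(\prod_{\sigma\in S_1}\sgn(v_\sigma + v_\sigma')\Bigr)\prod_\sigma e^{2\pi i\tau_\sigma\sigma(\alpha)\sigma(m)},
\end{align*}
which is the asserted formula because $\Tr_{F/\Q}(\tau\alpha m) = \sum_\sigma\tau_\sigma\sigma(\alpha)\sigma(m)$, and the product vanishes when $m$ is not totally positive. The one delicate step is the split-place integral: one must hit on the substitutions $t = |v/v'|^{1/2}e^w$ and $p = \sinh(w)$, track the several constants (in particular the factors $2$ separating the quadratic form from the bilinear form) so that $\sqrt{|\alpha_\sigma|}$ and the spurious powers of $y_\sigma$ cancel down to exactly $\sqrt{y_\sigma}$ and to the holomorphic exponential $e^{2\pi i\tau_\sigma\sigma(\alpha)\sigma(m)}$, and notice that the integrand is odd — whence the integral vanishes — exactly for the terms with $\sigma(m) < 0$.
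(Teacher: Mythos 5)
Your proof is correct and takes essentially the same route as the paper: factor the integral over the archimedean places, observe that the $S_2$ places contribute only the Gaussian, and at each $S_1$ place use the parity of the integrand under $t\mapsto -t$ together with the substitution $t=\vert v/v'\vert^{1/2}u$ to produce the factor $2\sgn(v_\sigma+v'_\sigma)$ and the vanishing when $v_\sigma v'_\sigma<0$. The only cosmetic differences are that you evaluate the resulting one-dimensional integral by a $\sinh$ substitution where the paper invokes the closed form of the $K_{1/2}$-Bessel function, and that you track the $\omega(g_\8)$-action and the positivity of $\sigma(\alpha)$ explicitly rather than leaving them implicit.
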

\begin{proof}
    At a place $\sigma$ in $S_2$ there is no integral, since $\D_\sigma^+$ is a point. At a place $\sigma$ in $S_1$ we have
    \begin{align}
   \int_{E_\sigma^{1}} \omega(1,{t_\sigma})\varphi_\sigma(v_\sigma,v_\sigma')\chi_\sigma(t_\sigma)dt_\sigma^\times & =  \sqrt{\vert \alpha_\sigma \vert} \int_{\R^\times} e^{- \pi \vert \alpha_\sigma \vert \left ( (v_\sigma {t_\sigma}^{-1})^2+({t_\sigma}v_\sigma')^2\right )} \left ( \frac{v_\sigma}{{t_\sigma}}+{t_\sigma}v_\sigma' \right ) \sgn_\sigma(t_\sigma) \frac{d{t_\sigma}}{{t_\sigma}}  \nonumber \\
        & =  2\sqrt{\vert a_\sigma v_\sigma v_\sigma'\vert} \int_0^\8 e^{- \pi \vert a_\sigma v_\sigma v_\sigma' \vert \left ( u^{-2}+u^2\right )} \left ( \frac{\sgn(v_\sigma)}{u}+\sgn(v_\sigma')u \right ) \frac{du}{u} \nonumber 
    \end{align}
    after the substitution $t_\sigma=\sqrt{\left \vert \frac{v_\sigma}{v_\sigma'}\right \vert} u$. Note that the assumption that $\chi_\sigma(t_\sigma)=\sgn(t_\sigma)$ is crucial here, since the integral would be $0$ otherwise. After substituting $\beta=u^2$ we find that this integral is
    \begin{align} \label{integral}
      K_{\frac{1}{2}}(2 \pi \vert \alpha_\sigma v_\sigma v_\sigma'\vert) \sqrt{\vert \alpha_\sigma v_\sigma v_\sigma'\vert} \sgn(v_\sigma+v'_\sigma)
    \end{align}
    where $K_s(w)=\int_0^\8e^{-w(\beta^{-1}+\beta)}\beta^s\frac{d\beta}{\beta}$ is the $K$-Bessel function. We deduce that the integral vanishes when $m_\sigma=v_\sigma v'_\sigma $ is negative at some place {\em i.e.} when $m$ is not totally positive. Using the fact that $K_{\frac{1}{2}}(w)=\sqrt{\frac{2 \pi}{w}}e^{-w}$, equation \eqref{integral} becomes
    \begin{align}
        2\sgn(v_\sigma+v'_\sigma)e^{-2 \pi \vert \alpha_\sigma v_\sigma v_\sigma'\vert}
    \end{align} Thus, taking the product over all the places of $F$ we get
    \begin{align}\label{integral1}
        \int_{E_\8^{1}} \omega(1,t_\8)\varphi_\8(v) \chi_\8(t)dt_\8^\times = 2^q \left (\prod_{\sigma \in S_1} \sgn(v_\sigma+v'_\sigma) \right ) \prod_{\sigma \in S_1 \cup S_2} e^{-2\pi \vert \alpha_\sigma v_\sigma v'_\sigma \vert}.
    \end{align}
\end{proof}

\subsubsection{} Recall that we can write
\begin{align} 
    \Theta^T_{\varphi}(\tau_1, \cdots, \tau_d)= \thetil^T_\varphi(\tau_1, \cdots, \tau_d,t)dt^\times \in \Omega^{q}(Y_T).
\end{align}
By \eqref{equality1024} we have
\begin{align}
\thetil^T_\varphi(\tau_1, \cdots, \tau_d,t)=  (y_1 \cdots y_d)^{-\frac{1}{2}} \sum_{v \in E}  (\omega(g_{\8},t)\varphi)(v)
\end{align}
where $\varphi=\varphi_\8 \otimes \varphi_\fin \in \Scal(\A_E)$ is as above. We get
\begin{align}
    \Theta^T_{\varphi}(\tau_1, \cdots, \tau_d,\chi) & = \frac{2^q}{\vol(K_{T})}\int_{E^1 \backslash E^1_\A} (y_1 \cdots y_d)^{-\frac{1}{2}} \sum_{v \in E}  (\omega(g_{\8},t)\varphi)(v) \chi(t) dt^\times \nonumber \\
    & = \frac{2^q}{\vol(K_{T})} (y_1 \cdots y_d)^{-\frac{1}{2}} \sum_{v \in E^1\backslash E}  \int_{E^1_\A}(\omega(g_{\8},t)\varphi)(v) \chi(t) dt^\times \nonumber \\
    & =  \sum_{m \in F} c^{(m)}_\varphi(C_\chi)e^{2 i \pi \Tr_{F/\Q}(\tau \alpha m)}
\end{align}
where 
\begin{align} \label{fouriercoeffexp}
 c^{(m)}_\varphi(C_\chi)  & \coloneqq \frac{2^q}{\vol(K_{T})} \sum_{\substack{v \in E^1 \backslash E \\ \N_{E/F}(v)=m}} \int_{E^1_\A}(\omega(g_{\8},t)\varphi)(v) \chi(t) dt^\times \nonumber \\
 & =  \frac{2^q}{\vol(K_{T})} \sum_{\substack{v \in E^1 \backslash E \\ \N_{E/F}(v)=m}} \left (\prod_{\sigma \in S_1} \sgn(v_\sigma+v'_\sigma) \right ) \int_{\A_{E,\fin}^1} \varphi_\fin(t^{-1}v)\chi_\fin(t)dt^\times.
\end{align}
Note that $c^{(m)}_\varphi(C_\chi)$ is nonzero only if $m$ is totally positive.
\begin{thm} Let $\chi$ be a character on a anisotropic maximal $\Q$-torus $\Tbf$ of maximal real rank. Then
$\Theta^V_{\varphi}(\tau,C_\chi)$ is a cusp form.
\end{thm}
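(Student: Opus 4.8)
The plan is to deduce cuspidality from the seesaw by passing to the Hilbert modular form side. First I would invoke the seesaw identity \eqref{seesaw1}: it identifies $\Theta_\varphi(\tau,C_\chi)=\int_{C_\chi}\Theta_\varphi(\tau)$ with the diagonal restriction of the parallel‑weight‑one Hilbert modular form $\Theta_\varphi^E(\tau_1,\dots,\tau_d,\chi)\in M_{(1,\dots,1)}(\Gamma')$ for $\SL_2(F)$. Since the diagonal restriction of a Hilbert \emph{cusp} form is an elliptic cusp form — at any cusp the Fourier support of a holomorphic Hilbert modular form lies in $\{0\}\cup\{\nu\gg 0\}$, and no nonzero totally positive $\nu$ satisfies $\Tr_{F/\Q}(\nu)=0$, so the only constant term that $f|_d\gamma$ (for $\gamma\in\SL_2(\Q)$, itself the diagonal restriction of the Hilbert cusp form $\Theta_\varphi^E(\cdot,\chi)|(\gamma,\dots,\gamma)$) can pick up comes from $\nu=0$ — it suffices to show that $\Theta_\varphi^E(\cdot,\chi)$ is a Hilbert cusp form.

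To prove this I would compute the constant term of the associated automorphic form on $\SL_2(\A_F)$ along the unipotent radical $N$ of the standard Borel. Unfolding the theta kernel $\Theta_\varphi^E(g,h_\fin)=\sum_{v\in E}(\omega_E(g,h_\fin)\varphi^E)(v)$, and using $(\omega_E(n(x))\varphi^E)(v)=\psi_F(x\,Q_\alpha(v,v))\varphi^E(v)$ together with $\int_{F\backslash\A_F}\psi_F(xc)\,dx=\mathbf{1}_{c=0}$, the constant term at $g$ equals the integral over $Y_\chi$ of the boundary theta series $\sum_{v\in E,\ Q_\alpha(v,v)=0}(\omega_E(g)\varphi^E)(v)$. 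Because $E/F$ is a field extension, $Q_\alpha(v,v)=2\alpha\N_{E/F}(v)$ vanishes only at $v=0$, so the sum collapses to the single term $(\omega_E(g)\varphi^E)(0)$. The crucial point is that $\varphi^E(0)=0$: at any place $\sigma\in S_1$ — and $S_1\neq\varnothing$ since $q=\lvert S_1\rvert>0$ by Proposition \ref{rankcondi} — the Kudla–Millson form $\varphi_{\KM}^\sigma$ is a Gaussian times the linear polynomial $v_\sigma t_\sigma^{-1}+t_\sigma v_\sigma'$, which vanishes at the origin, so the wedge $\varphi_{\KM}^E=\bigwedge_\sigma\varphi_{\KM}^\sigma$ vanishes at $0$. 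Hence the constant term vanishes identically in $g$, so $\Theta_\varphi^E(\cdot,\chi)$ is cuspidal; concretely, this is the vanishing of the $m=0$ coefficient in \eqref{fouriercoeffexp} (forced by $\N_{E/F}(v)=0$, hence $v=0$) for the same reason.

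For the general \'etale algebra $E=E_1\times\cdots\times E_r$ I would use Proposition \ref{splitting}, which writes $\int_{C_\chi}\Theta_\varphi(\tau)$ as a finite sum of products $\prod_{i=1}^r\Theta_{\varphi^{\beta,i}}(\tau^\Delta,\chi_i)$ of diagonal restrictions of parallel‑weight‑one Hilbert modular forms for the $\SL_2(F_i)$. Since $\Tbf$ is $\Q$‑anisotropic, none of the $E_i/F_i$ is split, so each is a field extension; and since $q=\sum_i q_i>0$ there is an index $i_0$ with $q_{i_0}=\lvert S_1^{(i_0)}\rvert>0$. The argument of the previous paragraph — which used only that $E_{i_0}/F_{i_0}$ is a field extension and that $S_1^{(i_0)}$ is non‑empty — then shows $\Theta_{\varphi^{\beta,i_0}}(\cdot,\chi_{i_0})$ is a Hilbert cusp form, hence its diagonal restriction is a cusp form; as the product of a cusp form with holomorphic modular forms is again a cusp form, each summand, and therefore the whole sum, is cuspidal.

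The conceptual heart — and the point that needs the most care — is the collapse of the boundary term to $v=0$ together with its vanishing: for a general Schwartz function the theta lift of a character on an anisotropic torus is an Eisenstein series (its $v=0$ coefficient being essentially a partial zeta or $L$‑value), so cuspidality is special to the Kudla–Millson choice of $\varphi_{\KM}$, whose polynomial prefactor kills the origin. A related subtlety is that the constant term must be taken over $N(\A_F)$, not $N(\A_\Q)$: the larger integration forces $Q_\alpha(v,v)=0$ in $F$ (hence $v=0$), whereas working directly over $\Q$ would only impose $\Tr_{F/\Q}Q_\alpha(v,v)=0$ and would not collapse the sum — which is exactly why the argument is routed through the Hilbert modular form and its diagonal restriction.
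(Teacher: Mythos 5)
Your argument is correct and follows essentially the same route as the paper: the constant term is computed through the Fourier expansion of the Hilbert theta lift over $F$, where anisotropy of $\N_{E/F}$ on the field $E$ collapses the $m=0$ contribution to the single vector $v=0$, which is then killed by the polynomial prefactor of $\varphi_{\KM}$ (equivalently, by the factor $\prod_{\sigma\in S_1}\sgn(v_\sigma+v'_\sigma)$ in \eqref{fouriercoeffexp}), and the product case is reduced to this via Proposition \ref{splitting} exactly as you do. Your write-up is in fact slightly more complete than the paper's, since by framing the computation as the adelic constant term along $N(\A_F)$ and then passing through the diagonal restriction with the $\Tr_{F/\Q}(\nu)=0\Rightarrow\nu=0$ observation, you account for vanishing at \emph{all} cusps of $\Gamma_0(N)$, whereas the paper only exhibits the vanishing of the $q^0$-coefficient at $\infty$.
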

\begin{proof}
 Suppose that $F$ is a field, and $E$ a quadratic field extension. Then the only $v \in E$ for which $Q_\alpha(v,v)=\N_{E/F}(v,v)=0$ is $v=0$. Since $v_\sigma=v_\sigma'=0$, we see from \eqref{fouriercoeffexp} that the constant term $c^{(0)}_\varphi(C_\chi)$ is zero.
 
If $E/F$ is a product of field extensions $E_i/F_i$, then by Proposition \ref{splitting} the constant term is
\begin{align} \label{constantterm2}
    c^{(0)}_\varphi(C_\chi)=\sum_{\beta} c^{(0)}_{\varphi^{\beta,1}}(C_{\chi_1}) \cdots c^{(0)}_{\varphi^{\beta,r}}(C_{\chi_r}).
    \end{align}
In particular, there must be one of the fields $F_i$ that has a place $\sigma \in S_1$. For this index $i$, the constant term $c^{(0)}_{\varphi^{\beta,i}}(C_{\chi_i})$ is zero for any $\beta$. Hence \eqref{constantterm2} vanishes.
\end{proof}
\subsection{Generating series of intersection numbers} \label{sec:genseries} Let $(V,Q)$ be a quadratic space of even dimension $2d$ over $\Q$. Let $L$ be an even integral lattice and $\varphi_\fin=\id_{\Lhat} \in \Scal(V_{\A_\fin})$ its characteristic function. We describe the geometric features of the Kudla-Millson form, that allows us to have a nice geometric interpretation of Kudla-Millson lift $\Theta_\varphi(\tau,C)$ defined in \eqref{kmlift1}.

\subsubsection{} Any vector $v$ in $V$ with $Q(v,v)>0$ defines a submanifold of codimension $q$
\begin{align}
    \D^+_{v} \coloneqq \left \{ z \in \D^+ \left \vert z \subset v^\perp \right . \right \}.
\end{align}
For every $I$ in the group $\Ccal$ of double cosets 
\begin{align}
    \Ccal = \SO_V(\Q)^+ \backslash \SO_V(\A_\fin)/K_\fin
\end{align}
let $\Gamma_{I,v}$ be the stabilizer of $v$ in $\Gamma_I$. We denote by $C_v(h_I)$ the image of the composition
\begin{align}
\Gamma_{I,v} \backslash \D^+_v \hooklongrightarrow \Gamma_{I,v} \backslash \D^+ \longrightarrow \Gamma_I \backslash \D^+ \eqqcolon Y_I. \end{align}
Note that $C_v(h_I)$ only depends on the orbit of $\Gamma_I v$. For a positive number $n \in \Q$, we define the weighted cycles 
\begin{align}
    C_n(\varphi_\fin,h_I) \coloneqq \sum_{\substack{ v \in \Gamma_I \backslash V \\ Q(v,v)=2n}} \varphi_\fin(h_I^{-1}v)C_v(h_I) \in \Zcal_{pq-q}(Y_I,\partial Y_I,\Z)
\end{align}
and
\begin{align}
    C_n(\varphi) \coloneqq \sum_{I \in \Ccal} C_n(\varphi_\fin,h_I) \in \Zcal_{pq-q}(Y,\partial Y,\Z).
\end{align}
Note that since $L$ is assumed to be integral, the cycle $C_n(\varphi)$ is only nonzero for $n \in \NN_{>0}$.
\subsubsection{}
Let us go back to the Kudla-Millson form introduced in \ref{subseckm} and described its geometric features. Recall that the Kudla-Millson
\begin{align}
    \varphi_{\KM} \in \Omega^{q}(\D^+,\Scal(V_\R))^{\SO_V(\R)^+} \simeq \left [\Omega^{q}(\D^+) \otimes \Scal(V_\R) \right ]^{\SO_V(\R)^+}
\end{align}
is a closed and $\SO_V(\R)^+$-invariant on $\D^+$. In particular, if $\Gamma=\Gamma_I$ (for some $I \in \Ccal$) is a congruence subgroup of $\SO_{V}(\Q)^+$, then the form $\varphi_{\KM}$ is $\Gamma_v$-invariant, where $\Gamma_v$ is in the stabilizer of $v$ in $\Gamma$. Hence, it descends to a form on $\Gamma_v \backslash \D^+$. 

The main geometric feature of the Kudla-Millson form is the following Thom form property. Let $v$ be a positive vector. For any compactly supported form $\omega \in \Omega_c^{pq-q}(\Gamma_v \backslash \D^+)$ of complementary degree we have
\begin{align}
    \int_{\Gamma_v \backslash \D^+} \varphi_{\KM}(v) \wedge \omega= e^{-\pi Q(v,v)} \int_{\Gamma_v \backslash \D^+_v}\omega.
\end{align}
In other words, the form 
\begin{align}
    \varphi^0(v) \coloneqq e^{\pi Q(v,v)}\varphi_{\KM}(v)
\end{align}
is a Poincaré dual to $\Gamma_v \backslash \D^+_v$ in $\Gamma_v \backslash \D^+$.

\subsubsection{} Note that for $g_\tau=\begin{pmatrix}
    \sqrt{y} & \frac{x}{\sqrt{y}} \\ 0 & \frac{1}{\sqrt{y}}
\end{pmatrix}$ we have
\begin{align}
    (\omega(g_\tau)\varphi_{\KM})(\tau)& = y^{\frac{p+q}{2}}\varphi_{\KM}(\sqrt{y}v)e^{i\pi x Q(v,v)}= y^{\frac{p+q}{2}}\varphi^0(\sqrt{y}v)e^{i\pi \tau Q(v,v)}.
\end{align}We can rewrite the theta series \eqref{kmlift2} as
\begin{align}
   \Theta_\varphi(\tau) = y^{-\frac{p+q}{2}} \sum_{v \in V} (\omega(g_\tau)\varphi)(v)  = \sum_{n \in \Q} \Theta_\varphi^{(n)}(\tau)q^n
\end{align}
by grouping the vectors of same length, where
\begin{align}
    \Theta_\varphi^{(n)}(\tau) \coloneqq \sum_{\substack{ v \in V \\ Q(v,v)=2n}} \varphi^{0}(\sqrt{y}v) \varphi_\fin(v)=\sum_{\substack{ v \in L \\ Q(v,v)=2n}} \varphi^{0}(\sqrt{y}v).
\end{align}
The form $\Theta_\varphi^{(n)}(\tau)$ is closed, and for positive $n$ it represents a Poincaré dual of the special cycle $C_n(\varphi)$ in $H^q(Y,\C)$. For negative $n$, the form is exact. When $V$ is anisotropic, the constant term is $\varphi_{0}(0) \varphi_\fin(0)$ where $\varphi_{0}(0)$ represents the Euler class of the tautological bundle over $\D^+$. 

By the work of Kudla and Millson, if $C$ is a class in $H_q(Y,\Z)$ then the period
\begin{align}
    \Theta^V_\varphi(\tau,C) = \int_C \Theta^V_\varphi(\tau)
\end{align}
is a modular form of weight $d=\frac{p+q}{2}$ of level $\Gamma_0(N)$. Since $\Theta_\varphi^{(n)}(\tau)$ is a Poincaré dual to $C_n(\varphi)$, we get that
\begin{align} \label{genseries}
 \int_C \Theta^V_\varphi(\tau) =  c^0_\varphi(C) + \sum_{ n=1}^\8\int_C \Theta_\varphi^{(n)}(\tau) q^n =  c^0_\varphi(C) + \sum_{ n \in \Q_{>0}} \langle C,C_n(\varphi) \rangle q^n.
\end{align} 

When the cycles $C$ and $C_n(\varphi)$ intersect transversely, then $\langle C,C_n(\varphi) \rangle$ is the signed intersection number.

\subsubsection{} Let $E=E_1 \times \cdots \times E_r$ with fixed subalgebra $F=F_1 \times \cdots \times F_r$. So the torus is a product $\Tbf(\Q) = \Tbf_1(\Q) \times \cdots  \times \Tbf_r(\Q)$ where $\Tbf_i(\Q) \simeq E_i^1$. Let $\chi=\chi_1 \times \cdots \times \chi_r$ where $\chi_i \colon \Tbf(\Q) \backslash \Tbf(\A) \longrightarrow \C^\times$. The Schwartz function splits as $\varphi_\fin=\sum_{\beta} \varphi_{\fin}^{\beta,1} \otimes \cdots \otimes \varphi_{\fin}^{\beta,r} \in \Scal(\A_{E_1,\fin}) \otimes \cdots \otimes \Scal(\A_{E_r,\fin})$, as in Proposition \ref{splitting}. 

\begin{thm} \label{mainthmgen}  We have 
\begin{align}
    \sum_{ n =1}^\8 \langle C_\chi,C_n(\varphi) \rangle q^n = \sum_{\beta} \Theta_{\varphi^{\beta,1}}(\tau^\Delta,\chi_1) \cdots \Theta_{\varphi^{\beta,r}}(\tau^\Delta,\chi_r).
    \end{align}
\end{thm}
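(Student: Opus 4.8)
The plan is to combine three facts already at our disposal: the Fourier expansion \eqref{genseries} of the Kudla--Millson lift over $\Q$, the factorization of Proposition \ref{splitting}, and the vanishing of the constant term established in the cuspidality theorem above.

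First I would specialize \eqref{genseries} to the toric class $C=C_\chi \in H_q(Y,\C)$ of \eqref{cycledef}; since \eqref{genseries} is linear in $C$ it extends from $\Z$- to $\C$-coefficients, giving
\begin{align}
    \int_{C_\chi}\Theta_\varphi(\tau) \;=\; c^0_\varphi(C_\chi) \;+\; \sum_{n=1}^\8 \langle C_\chi, C_n(\varphi)\rangle\, q^n.
\end{align}
Next I would apply Proposition \ref{splitting}, which computes the same period as
\begin{align}
    \int_{C_\chi}\Theta_\varphi(\tau) \;=\; \sum_{\beta}\Theta_{\varphi^{\beta,1}}(\tau^\Delta,\chi_1)\cdots\Theta_{\varphi^{\beta,r}}(\tau^\Delta,\chi_r).
\end{align}
Equating the right-hand sides of the last two displays yields the asserted identity up to the constant term $c^0_\varphi(C_\chi)$, which the cuspidality theorem proved above forces to vanish --- its proof reduces, through \eqref{fouriercoeffexp} and \eqref{constantterm2}, to the observation that some factor $F_i$ carries a place in $S_1$, so that $c^{(0)}_{\varphi^{\beta,i}}(C_{\chi_i})=0$ for every $\beta$ and every summand of \eqref{constantterm2} is killed.

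I do not expect any genuine obstacle here: all of the analytic content --- the seesaw identity \eqref{seesaw1}, its refinement over a product of factors (Proposition \ref{splitting}), and the cuspidality computation --- has already been carried out, and the theorem is a formal consequence. The only point calling for a line of bookkeeping is to note that the constant term of $\sum_{\beta}\prod_{i}\Theta_{\varphi^{\beta,i}}(\tau^\Delta,\chi_i)$ is $\sum_{\beta}\prod_{i}c^{(0)}_{\varphi^{\beta,i}}(C_{\chi_i})$, so that the two sides agree coefficient by coefficient, including in degree zero where both vanish.
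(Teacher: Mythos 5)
Your proposal is correct and follows exactly the paper's route: the paper's own proof is the one-line "combine \eqref{genseries} with Proposition \ref{splitting}," with the vanishing of $c^0_\varphi(C_\chi)$ supplied by the cuspidality theorem proved just before. Your extra bookkeeping on the constant term only makes explicit what the paper leaves implicit.
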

\begin{proof}
    It follows from combining \eqref{genseries} with Proposition \ref{splitting}.
\end{proof}
\begin{rmk}
    In fact, one can show that that when $F$ is a field, the cycles $C_\chi$ and $C_n(\varphi)$ intersect transversely.
\end{rmk}

\section{Example of biquadratic fields} \label{lowrank}
We consider the setting of a split quadratic space of signature $(2,2)$. The setting is similar to \cite{rbrforum}, but we consider the integral over a product of compact geodesics instead of the the product of a compact geodesic with $\gamma_\8$.

\subsubsection{} Consider the quadratic space $(V,Q)=(\Mat_2(\Q),2\det)$ with the quadratic form \begin{align}
2\det(x)=\Tr(xx^*), \end{align}
where the involution is
\begin{align} \label{involutiont}
    \begin{pmatrix}
        a & b \\ c & d
    \end{pmatrix}^*=\begin{pmatrix}
        d & -b \\ -c & a
    \end{pmatrix}=S^T\begin{pmatrix}
        a & b \\ c & d
    \end{pmatrix}^TS
\end{align}
with $S=\begin{pmatrix}
        0 & 1 \\ -1 & 0
    \end{pmatrix}$ and $A^T$ is the transpose of $A$. In particular, we have $(xy)^*=y^*x^*$. As a bilinear form, the quadratic form is
    \begin{align}
        Q(x,y)=\Tr(xy^\ast)=\Tr(x^\ast y)
    \end{align}
    where $\Tr$ is the matrix trace. The spin group $\GSpin_V(\Q)$ is isomorphic to
\begin{align}
    \GSpin_V(\Q) & \simeq \GL_2(\Q) \times_{\det} \GL_2(\Q)
\end{align}
and consists of matrices $(g_1,g_2)$ with $\det(g_1)=\det(g_2)$. It acts on $\Mat_2(\Q)$ by $\rho(g_1, g_2)y=g_1yg_2^{-1}$ and preserves the quadratic form $\det$. The action of $\GSpin_V(\Q)$ on $V$ induces a short exact sequence
\begin{align}  
     1 \xrightarrow{\ \  \  \ \  } \Q^\times \xrightarrow{\ \  \  \ \  } \GSpin_V(\Q) \xrightarrow{\ \  \rho \ \  } \SO_V(\Q) \xrightarrow{\ \  \ \ \  } 1
\end{align}
where $\Q^\times$ is the center of $\GSpin_V(\Q)$. We have \begin{align}\SO_V(\Q) \simeq \GSpin_V(\Q)/\Q^\times.\end{align}

The connected component $\GSpin_V(\R)^+=\GL_2(\R)^+ \times_{\det} \GL_2(\R)^+$ consists of pairs of matrices with (same) positive determinant. It acts transitively on $\HH \times \HH$ by Möbius transformations in both factors. Let $\Ktil_\8 \subset \GSpin_V(\R)^+$ be the stabilizer of a point in $\HH \times \HH$, so that $\HH \times \HH \simeq \GSpin_V(\R)^+ / \Ktil_\8$.
The stabilizer of $(i,i)$ is $\R_{>0} (\SO(2) \times \SO(2))$. We can extend the Weil representation from the pair $\SO_V(\Q_v) \times \SL_2(\Q_v)$ to the pair $\GSpin_V(\Q_v) \times \SL_2(\Q_v)$ by 
 \begin{align}     \omega(\gtil,h)\varphi(x)=\omega(1,h)\varphi(\rho(\gtil)^{-1}x),
 \end{align}
where $\varphi \in \Scal(\Mat_2(\Q_p))$ is a Schwartz-Bruhat function.

\subsection{Locally symmetric space} For the locally symmetric spaces and the special cycles in this setting, we refer to \cite{rbrforum} for more details. Let $M_0(p) \subset \Mat_2(\Q)$ be the lattice
\begin{align}
    M_0(p) \coloneqq \left \{ \left . \begin{pmatrix}
    a & b \\ c & d
    \end{pmatrix} \in \Mat_2(\Z) \ \right \vert p \mid c, \, (a,p)=1, \, ad-bc>0\right \}
\end{align}
and
 \begin{align}
     \widehat{M}_0(p) \coloneqq \left \{ \left . \begin{pmatrix}
    a & b \\ c & d
    \end{pmatrix} \in \Mat_2(\widehat{\Z})  \ \right \vert a_p \in \Z_p^\times, \, c_p \in p\Z_p \right \},
     \end{align}
that satisfies $\Mat_2(\Q)^+ \cap \widehat{M}_0(p)=M_0(p)$. Let $\varphi_\fin \in \Scal(\Mat_2(\widehat{\Z}))$ be the characteristic function of $\widehat{M}_0(p)$. It is preserved by the open compact $\Ktil_0(p)$, where $\Ktil_0(p) \coloneqq K_0(p) \times_{\det} K_0(p)$ is an open compact in $\GSpin_V(\widehat{\Z})$ and
\begin{align}
K_0(p)=\left \{ \left . \begin{pmatrix}
 a & b \\ c & d
\end{pmatrix} \in \GL_2(\widehat{\Z}) \ \right \vert \  c \in p\widehat{\Z} \right \}. 
\end{align}
We set $\Ktil \coloneqq \Ktil_\8 \Ktil_0(p)$ and we have
\begin{align}
    Y = \GSpin_V(\Q) \backslash \GSpin_V(\A)/\Ktil \nonumber \simeq Y_0(p) \times Y_0(p)
\end{align}
by the strong approximation $\GL_2(\A_\fin)=\GL_2(\Q)^+K_0(p)$.
\subsubsection{} For a vector $M$ in $\Mat_2(\R)$ with positive determinant, the submanifold $\D^+_M$ in $\D^+$ is the image of the map
\begin{align}
    \HH & \hooklongrightarrow \HH \times \HH \nonumber \\
    z & \longmapsto (Mz,z).
\end{align} The special cycles $C_M$ is the image of the immersion
\begin{align}
    \Gamma_M \backslash \HH \longrightarrow Y_0(p) \times Y_0(p)
\end{align}
where $\Gamma_M \coloneqq \Gamma_0(p) \cap M^{-1} \Gamma_0(p) M$. Hence, the special cycles
\begin{align}
    C_n(\varphi)=\sum_{\substack{{M \in \Gamma_0(p) \backslash M_0(p) / \Gamma_0(p)} \\ \det (M)=n}} C_M
\end{align}
are correspondences in $Y_0(p) \times Y_0(p)$.
\subsection{Maximal torus in $\GSpin_V$}
Let $L=\Q(\sqrt{D})$ be a real quadratic field of fundamental discriminant $D$ and suppose that $p$ is split in $L$. Let $\Ocal_L$ be the ring of integers. Let us fix an integer $r \in \Z$ such that $r^2 \equiv D \pmod{4p}$. A form $[a,b,c]=ax^2+bxy+cy^2$ of squarefree discriminant $D$ is called a Heegner form at $p$ if it satisfies $a \equiv 0 \pmod{p}$ and $b \equiv r \pmod{p}$. Let $(u,v)$ be a positive fundamental solution to the Pell equation $u^2-Dv^2=1$. To every primitive Heegner form $[a,b,c]$ at $p$ of discriminant $D=b^2-4ac$ we can associate the matrix
\begin{align}
    \begin{pmatrix}
        u-bv & -2cv \\ 2av & u+bv
    \end{pmatrix} \in \Gamma_0(p).
\end{align}
It is a generator of the (free part of) the orthogonal group of the quadratic form $[a,b,c]$. Its two eigenvalues are $\epsilon_L=u+\sqrt{D}v$ and $\epsilon_L^{-1}=u-\sqrt{D}v$, where $\epsilon_L$ is a fundamental unit in $\Ocal^\times_L$. The eigenvectors are
\begin{align}
    \begin{pmatrix}
        -b + \sqrt{D} \\ 2a
    \end{pmatrix}, \quad \begin{pmatrix}
        -b - \sqrt{D} \\ 2a
    \end{pmatrix}.
\end{align}
The embedding $\phi \colon L \hooklongrightarrow \Mat_2(\Q)$ given by 
\begin{align}
    \phi(\sqrt{D}) = \begin{pmatrix}
        -b & -2c \\ 2a & b
    \end{pmatrix}\in \Gamma_0(p)
\end{align}
is optimal in the sense that $\phi(L^\times) \cap \Gamma_0(p)=\Ocal_L^\times.$

\subsubsection{} Instead of taking a maximal algebraic $\Q$-torus $\Tbf$ in $\SO_V$, let us start with a maximal $\Q$ torus $\Ttil$ in $\GSpin_V$. The image $\Tbf(\Q)=\rho(\Ttil(\Q))$ in $\SO_V(\Q)$ is a maximal $\Q$-torus and we have an exact sequence 
\begin{align}  
     1 \xrightarrow{\ \  \  \ \  } \Q^\times \xrightarrow{\ \  \  \ \  } \Ttil(\Q) \xrightarrow{\ \  \rho \ \  } \Tbf(\Q) \xrightarrow{\ \  \ \ \  } 1.
\end{align}
 Let $L_1$ and $L_2$ be two real quadratic fields with distinct discriminants $D_1$ and $D_2$. Suppose that $p$ is split in both fields. Let $[N_1,r_1,1]$ and $[N_2,r_2,1]$ be the two principal\footnote{It represents the unit in the narrow class group.} Heegner forms where $N_i \coloneqq \frac{r_i^2-D_i}{4}$. Let $\phi_i \colon L_i \hooklongrightarrow \Mat_2(\Q)$ be the two associated optimal embeddings, given by
\begin{align}
    \phi_i(\sqrt{D}) = \begin{pmatrix}
        -r_i & -2 \\ 2N_i & r_i
    \end{pmatrix}\in \Gamma_0(p).
\end{align}
Combining these two embeddings gives an embedding 
\begin{align} \label{Qiso}
    \phi_1 \times \phi_2 \colon L_1^\times \times_{\N} L_2^\times \hooklongrightarrow \GSpin_V(\Q) 
\end{align} and let $\Ttil(\Q) \simeq L_1^\times \times_{\N} L_2^\times$ be the image of this embedding. The product $L_1^\times \times_{\N} L_2^\times$ consists of elements $(\lambda_1,\lambda_2)$ in $L_1 \times L_2$ with the same nonzero norm. We have $\Tbf(\Q) \simeq L_1^\times \times_{\N} L_2^\times/ \Q^\times$.

\subsubsection{} We want to find the \'etale algebra $E$ associated to $\Ttil$. It is the centralizer of $\Ttil$ in $\End(V)=\End(\Mat_2(\Q))$.
\begin{lem}
    The map
\begin{align}
    \Mat_2(\Q) \otimes_\Q \Mat_2(\Q) \longrightarrow \End_\Q(\Mat_2(\Q)) \nonumber \\
    a \otimes b \longmapsto (a \otimes b)x \coloneqq axb^*
\end{align}
is an isomorphism of $\Q$-algebras, where the involution $b^*$ is as in \eqref{involutiont}.
\end{lem}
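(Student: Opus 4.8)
The plan is to recognize the stated map as a unital homomorphism of $\Q$-algebras, note that its source is a simple $\Q$-algebra, and finish with a dimension count.

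First I would observe that the assignment $(a,b)\mapsto (x\mapsto axb^*)$ is $\Q$-bilinear in $(a,b)$: it is obviously $\Q$-linear in $a$, and it is $\Q$-linear in $b$ because $b\mapsto b^*$ is $\Q$-linear by \eqref{involutiont}. Hence it induces a well-defined $\Q$-linear map
$\Phi\colon \Mat_2(\Q)\otimes_\Q\Mat_2(\Q)\longrightarrow \End_\Q(\Mat_2(\Q))$.
Next I would check that $\Phi$ is an algebra homomorphism. On simple tensors, the composite $\Phi(a\otimes b)\circ\Phi(c\otimes d)$ sends $x$ to $a(cxd^*)b^*=(ac)\,x\,(d^*b^*)=(ac)\,x\,(bd)^*$, where the last equality is the anti-multiplicativity $(bd)^*=d^*b^*$ of the involution recorded just above; thus $\Phi(a\otimes b)\,\Phi(c\otimes d)=\Phi\big((ac)\otimes(bd)\big)=\Phi\big((a\otimes b)(c\otimes d)\big)$, and by bilinearity this extends to all of the tensor product. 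Since moreover $1^*=\id_2$, we get $\Phi(1\otimes 1)=\idrm$, so $\Phi$ is unital.

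Then I would conclude by a simplicity-plus-dimension argument. The $\Q$-algebra $\Mat_2(\Q)\otimes_\Q\Mat_2(\Q)$ is central simple over $\Q$, being a tensor product over a field of two central simple algebras (indeed it is isomorphic to $\Mat_4(\Q)$); in particular it has no two-sided ideals other than $0$ and itself. The kernel of $\Phi$ is a two-sided ideal and $\Phi$ is nonzero since $\Phi(1\otimes 1)=\idrm\neq 0$, so $\ker\Phi=0$ and $\Phi$ is injective. Finally both the source and the target are $\Q$-vector spaces of dimension $16$ (namely $4\cdot 4$ and $(\dim_\Q\Mat_2(\Q))^2$), so the injection $\Phi$ is an isomorphism of $\Q$-algebras.

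There is no real obstacle here; the only subtlety worth flagging is that one must use the involution $*$ (not the ordinary transpose) on the second factor, precisely so that $\Phi$ becomes multiplicative. Equivalently, $b\mapsto b^*$ identifies the second copy of $\Mat_2(\Q)$ with $\Mat_2(\Q)^{\mathrm{op}}$, and $\Phi$ is then nothing but the canonical action of $\Mat_2(\Q)\otimes_\Q\Mat_2(\Q)^{\mathrm{op}}\cong\End_\Q(\Mat_2(\Q))$ on $\Mat_2(\Q)$. If one prefers to avoid structure theory, one can instead write down the inverse of $\Phi$ directly on the matrix units $E_{ij}\otimes E_{kl}$, but the simplicity argument is shorter.
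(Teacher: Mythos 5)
Your proof is correct, but it reaches the conclusion by a different route than the paper. Both arguments start from the observation that the map is a unital $\Q$-algebra homomorphism between $16$-dimensional algebras (you verify multiplicativity explicitly via the anti-multiplicativity $(bd)^*=d^*b^*$, which the paper merely asserts), so it suffices to establish either injectivity or surjectivity. You choose injectivity: since $\Mat_2(\Q)\otimes_\Q\Mat_2(\Q)$ is central simple (indeed $\cong\Mat_4(\Q)$), the kernel is a two-sided ideal of a simple algebra and the map is nonzero, hence injective. The paper instead proves surjectivity directly, computing that $E_{ki}\otimes E_{jl}^*$ acts on the matrix units $E_{ab}$ by $E_{ki}E_{ab}E_{jl}=\delta_{a=i}\delta_{b=j}E_{kl}$, so that every elementary endomorphism $\Ecal_{ij}^{kl}$ has an explicit preimage. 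Your structural argument is shorter and identifies the map as the canonical isomorphism $\Mat_2(\Q)\otimes_\Q\Mat_2(\Q)^{\mathrm{op}}\cong\End_\Q(\Mat_2(\Q))$ (with $*$ realizing the opposite algebra); the paper's computation has the side benefit of producing explicit preimages of the standard basis, which makes the subsequent identification of the centralizer of the torus inside $\End_\Q(\Mat_2(\Q))$ more concrete. Either is a complete proof.
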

\begin{proof} The map is a homomorphism of $\Q$-algebras. Since they both have dimension $16$, it is enough to show surjectivity. Let $E_{ij}$ be the standard basis of $\Mat_2(\Q)$, that sends the basis vector $e_a$ in $\Q^2$ to $\delta_{j=a}e_i$. A basis of $\End_{\Q}(\Mat_2(\Q))$ is $\Ecal_{ij}^{kl}$, given by $\Ecal_{ij}^{kl}(E_{ab})=\delta_{a=i}\delta_{b=j}E_{kl}$. We have that $E_{kl}E_{ij}=\delta_{l=i}E_{kj}$. Hence, the element $E_{kl} \otimes E_{ij}^*$ acts by 
    \begin{align}
        (E_{ki} \otimes E_{jl}^*)E_{ab}=E_{ki} E_{ab} E_{jl} = \delta_{a=i}E_{kb} E_{jl}=\delta_{a=i}\delta_{b=j} E_{kl}.
    \end{align}
    Hence $E_{kl} \otimes E_{ij}^\ast$ is sent to $\Ecal_{ij}^{kl}$, and the map is surjective. 
\end{proof}
We can then map $\GSpin_V(\Q)$ in $\End_\Q(\Mat_2(\Q))$ by sending $(g_1,g_2)$ to $\det(g_2)g_1 \otimes g_2$. The map is compatible with the actions of $\GSpin_V(\Q)$ and $\End_\Q(V)$ on $V$.
\begin{prop} \label{involem}
    The \'etale algebra is $E \simeq \Q(\sqrt{D_1},\sqrt{D_2})$ and the involution is $$\epsilon(\sqrt{D_1},\sqrt{D_2})=(-\sqrt{D_1},-\sqrt{D_2}).$$ The fixed subalgebra $F$ is the totally real field $\Q(\sqrt{D_1D_2})$.
\end{prop}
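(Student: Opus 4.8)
The plan is to realise $E$ as the centraliser of $\Tbf(\Q)=\rho(\Ttil(\Q))$ inside $\End_\Q(\Mat_2(\Q))$ through the algebra isomorphism $\End_\Q(\Mat_2(\Q))\simeq\Mat_2(\Q)\otimes_\Q\Mat_2(\Q)$ of the preceding Lemma, and then to transport the involution $\epsilon_Q$ across it. First I would observe that the endomorphism $\rho(g_1,g_2)\colon y\mapsto g_1yg_2^{-1}$ of $V=\Mat_2(\Q)$ corresponds, up to a nonzero scalar (because $b\mapsto b^{*}$ rescales by $\det(b)$ on $2\times 2$ matrices), to $g_1\otimes g_2$. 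Hence the image of $\Ttil(\Q)\simeq L_1^{\times}\times_{\N}L_2^{\times}$ in $\End_\Q(V)$ lies in the commutative subalgebra $A\coloneqq\phi_1(L_1)\otimes\phi_2(L_2)$, which therefore centralises $\Tbf(\Q)$, so $A\subseteq E$. Since $\dim_\Q A=4=2d=\dim_\Q E$ by Proposition \ref{converse}, we conclude $E=A=\phi_1(L_1)\otimes_\Q\phi_2(L_2)\simeq\Q(\sqrt{D_1})\otimes_\Q\Q(\sqrt{D_2})$, with $\phi_1(\sqrt{D_1})\otimes 1$ and $1\otimes\phi_2(\sqrt{D_2})$ corresponding to $\sqrt{D_1}$ and $\sqrt{D_2}$ respectively. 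As $D_1\neq D_2$ are fundamental discriminants, $\Q(\sqrt{D_1})\neq\Q(\sqrt{D_2})$, so $D_1D_2$ is not a square and $E$ is the biquadratic field $\Q(\sqrt{D_1},\sqrt{D_2})$.

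The substantive step is computing $\epsilon_Q$ on the tensor product. Using $Q(x,y)=\Tr(xy^{*})$, the identities $\Tr(A^{*})=\Tr(A)$ and $(xy)^{*}=y^{*}x^{*}$, and cyclicity of the trace, one checks that for $a,b\in\Mat_2(\Q)$
\[ Q\bigl((a\otimes b)x,\,y\bigr)=\Tr\bigl(axb^{*}y^{*}\bigr)=\Tr\bigl(x\,(a^{*}yb)^{*}\bigr)=Q\bigl(x,\,(a^{*}\otimes b^{*})y\bigr), \]
so that $\epsilon_Q(a\otimes b)=a^{*}\otimes b^{*}$, where $m\mapsto m^{*}$ is the involution of \eqref{involutiont}. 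On a trace-zero matrix this involution acts by $m\mapsto -m$, and $\phi_i(\sqrt{D_i})$ has trace zero, so $\phi_i(\sqrt{D_i})^{*}=-\phi_i(\sqrt{D_i})$; more generally $\phi_i(\lambda)^{*}=\phi_i(\bar\lambda)$ is the nontrivial Galois conjugate. Therefore, under the identification above, $\epsilon$ acts on $E\simeq\Q(\sqrt{D_1},\sqrt{D_2})$ as the product of the two Galois involutions, sending $\sqrt{D_1}\mapsto-\sqrt{D_1}$ and $\sqrt{D_2}\mapsto-\sqrt{D_2}$.

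For the fixed algebra, $F=E^{\epsilon}$ has degree $2$ over $\Q$, again by Proposition \ref{converse}, while $\epsilon(\sqrt{D_1D_2})=\epsilon(\sqrt{D_1})\epsilon(\sqrt{D_2})=\sqrt{D_1D_2}\notin\Q$; hence $F=\Q(\sqrt{D_1D_2})$, which is totally real because $D_1D_2>0$. The main obstacle is the involution computation of the second paragraph: one has to carry $\epsilon_Q$ --- intrinsically attached to the form $2\det$ on $\Mat_2(\Q)$ --- through the tensor decomposition and recognise the induced map on each factor $\phi_i(L_i)$ as its Galois involution; the remaining points are a dimension count and standard facts about biquadratic fields.
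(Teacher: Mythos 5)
Your proof is correct and follows essentially the same route as the paper's: identify $E$ with $L_1\otimes_\Q L_2$ inside $\End_\Q(\Mat_2(\Q))\simeq\Mat_2(\Q)\otimes_\Q\Mat_2(\Q)$, compute $\epsilon_Q(a\otimes b)=a^*\otimes b^*$ from $Q(x,y)=\Tr(xy^*)$, and recognise $*$ on each $\phi_i(L_i)$ as the Galois involution. If anything you supply two details the paper leaves implicit --- the dimension count via Proposition \ref{converse} showing the commutative algebra $\phi_1(L_1)\otimes\phi_2(L_2)$ is the full centraliser, and the trace-zero observation $m^*=\Tr(m)\id-m$ identifying $*$ with Galois conjugation --- so the write-up is, if anything, more complete than the original.
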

\begin{proof}
Let $\End_\Q(\Mat_2(\Q))$ be the endomorphism ring of $\Mat_2(\Q)$. Let $\epsilon_Q$ be the involution on $\Mat_2(\Q) \otimes \Mat_2(\Q)$ defined by
\begin{align}
    Q((a \otimes b)x,y)=Q(x,\epsilon_Q(a \otimes b)y).
\end{align}
Since $Q(x,y)=\Tr(xy^*)=\Tr(x^\ast y)$, we have $Q(xb,y)=Q(x,yb^*)$and $Q(ax,y)=Q(x,a^*y)$. Hence, the involution is given by $\epsilon_Q(a\otimes b)=a^* \otimes b^*$. The image of $\Ttil(\Q)$ in $\End_\Q(\Mat_2(\Q)) \simeq \Mat_2(\Q) \otimes_\Q \Mat_2(\Q)$ is 
 \begin{align}
     J=\left \{ \left . \N(x_2)x_1 \otimes x_2 \in \Mat_2(\Q) \otimes_\Q \Mat_2(\Q) \ \right \vert (x_1,x_2) \in L_1^{\times} \times_{\N} L_2^{\times} \right \}.
 \end{align} The \'etale algebra of endomorphisms commuting with $J$ is $E=L_1\otimes L_2$. Note that when restricted to $L_i$ the involution $x^*$ acts like the Galois involution $x \mapsto x'$, {\em i.e.} we have $\phi(x')=\phi(x)^*$. Hence, the involution $\epsilon_Q$ restricted to $E$ is $ \epsilon_Q(x_1 \otimes x_2) = x_1' \otimes x_2'$ where $x_i'$ is the Galois conjugate of $x_i$. It follows that the \'etale algebra is $E \simeq \Q(\sqrt{D_1},\sqrt{D_2})$ with the involution sending $\sqrt{D_i}$ to $-\sqrt{D_i}$.
 \end{proof}
 
\subsubsection{} The identity matrix $x_0=\id_2$ is an $L_1 \otimes L_2$ module generator of $\Mat_2(\Q)$. Hence, we have an isomorphism of vector spaces,
 \begin{align} \label{vspaceiso}
     E=L_1 \otimes L_2 & \longrightarrow \Mat_2(\Q) \nonumber \\
     \lambda_1 \otimes \lambda_2 & \longmapsto \phi_1(\lambda_1)\phi_2(\lambda_2)^{\ast}.
 \end{align}
Thus, there exists an $\alpha \in F^\times$ such that $(E,Q_\alpha)\simeq (\Mat_2(\Q),\det)$.
At the level of the torus, we have
\begin{align} \label{quadiso2}
    \Q(\sqrt{D_1},\sqrt{D_2})^1 \simeq \Tbf(\Q) \simeq \rho \left ( L_1^\times \times_{\N} L_2^\times \right ).
\end{align}

\subsubsection{} Over $\R$ the embedding \eqref{Qiso}becomes
\begin{align}
    \phi_1 \times \phi_2 \colon (L_1 \otimes \R)^\times \times_{\N} (L_2 \otimes \R)^\times \hooklongrightarrow \GSpin_V(\R).
\end{align}
The embeddings can be diagonalized over $\R$ as $\phi_i(\lambda)=B_i \begin{pmatrix}
    \lambda_{\sigma} & 0 \\ 0 & \lambda'_{\sigma}
\end{pmatrix}B_i^{-1}$
where $\lambda_{\sigma}=\sigma(\lambda) \in \R$ for an embedding $\sigma$, where $\lambda'$ is the Galois conjugate of $\lambda$ and
\begin{align}
    B_i=\begin{pmatrix}
        -r_i + \sqrt{D_i} & -r_i - \sqrt{D_i} \\ 2N_i & 2N_i
    \end{pmatrix} \in \GL_2(\R)^+.
\end{align}
(If $N_i<0$ then we exchange the two columns to have positive determinant.) Let us fix the point $(z_1,z_2)\coloneqq (B_{1}i,B_{2}i)$. Its stabilizer in $\GSpin_V(\R)^+$ is $\Ktil_\8(z_1,z_2)=(B_{1},B_{2}) \R_{>0}(\SO(2) \times \SO(2)) (B_{1}^{-1},B_{2}^{-1})$. Over $\R$, the torus can be diagonalized as
\begin{align}
    \Ttil(\R)=(B_{1},B_{2}) \Ttil_0(\R) (B_{1}^{-1},B_{2}^{-1})
\end{align} where $\Ttil_0(\R) \simeq (\R^\times)^2 \times_{\N} (\R^\times)^2$ are pairs of diagonal matrices with same determinant. The preimage $\Ktil_{\Tbf,\8} = (\phi_1 \times \phi_2)^{-1}(\Ktil_\8)$ of $\Ktil_\8$ by $\phi_1 \times \phi_2$ (tensored with $\R$) is $\R_{>0}(\pm 1, \pm 1)$. 
Hence 
\begin{align}
    \D_T^+ \simeq \Ttil(\R)^+/\Ktil_{\Tbf,\8} \simeq \R_{>0}^2.
\end{align}At the level of the locally symmetric spaces, the embedding is
\begin{align}\label{localemb}
    (\phi_1 \times \phi_2) \colon (\epsilon_{L_1}^\Z \times \epsilon_{L_2}^\Z) \backslash \R_{>0}^2 & \longrightarrow Y_0(p) \times Y_0(p) \nonumber \\
    (\epsilon_{L_1}^\Z \times \epsilon_{L_2}^\Z)(t_1,t_2) & \longmapsto (\Gamma_0(p) \times \Gamma_0(p))(B_1,B_2)(t_1i,t_2i).
\end{align}
As $t_1$ and $t_2$ range over $\R$, the image of $(t_1i,t_2i)$ is $\gamma_\8 \times \gamma_\8$ and we get the following. 
\begin{prop} The image of \eqref{localemb} is $\gamma_{\Ocal_1} \times \gamma_{\Ocal_2}$, where $\gamma_{\Ocal_i}$ be the geodesic joining $\frac{-r_i-\sqrt{D_i}}{2N_i}$ to $\frac{-r_i+\sqrt{D_i}}{2N_i}$.
\end{prop}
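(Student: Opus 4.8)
The plan is to reduce at once to a single copy of $\HH$: the map \eqref{localemb} is the product $\phi_1\times\phi_2$ of two maps of the same shape, with $\epsilon_{L_1}^{\Z}\times\epsilon_{L_2}^{\Z}$ acting coordinate-wise on $\R_{>0}^2$, so it suffices to identify, for $i=1,2$, the image of
\[
(\epsilon_{L_i}^{\Z})\backslash\R_{>0}\longrightarrow Y_0(p),\qquad (\epsilon_{L_i}^{\Z})\,t\longmapsto \Gamma_0(p)\,B_i(t i),
\]
and then take the product over $i$.

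The core is a computation upstairs in $\HH$. The ray $\{t i : t\in\R_{>0}\}$ is exactly the hyperbolic geodesic from $0$ to $\infty$, and since $B_i\in\GL_2(\R)^+$ acts on $\HH$ through an orientation-preserving isometry, the image $\{B_i(t i):t>0\}$ is the geodesic oriented from $B_i(0)$ to $B_i(\infty)$. A one-line Möbius computation with $B_i=\begin{pmatrix}-r_i+\sqrt{D_i} & -r_i-\sqrt{D_i}\\ 2N_i & 2N_i\end{pmatrix}$ gives $B_i(0)=\tfrac{-r_i-\sqrt{D_i}}{2N_i}$ and $B_i(\infty)=\tfrac{-r_i+\sqrt{D_i}}{2N_i}$, precisely the two endpoints appearing in the statement; equivalently, from the diagonalization $\phi_i(\lambda)=B_i\,\diag(\lambda_\sigma,\lambda'_\sigma)\,B_i^{-1}$ these are the two fixed points on $\partial\HH$ of the hyperbolic element $\phi_i(\epsilon_{L_i})$ (here one uses that $\epsilon_{L_i}$ is totally positive, since $(u_i,v_i)$ solves $u^2-D_iv^2=1$, so that $\diag(\lambda_\sigma,\lambda'_\sigma)$ indeed acts on $\HH$ by $z\mapsto(\lambda_\sigma/\lambda'_\sigma)z$). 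Hence $\{B_i(ti):t>0\}$ is the geodesic $\widetilde{\gamma}_{\Ocal_i}$ in $\HH$, i.e. the axis of $\phi_i(\epsilon_{L_i})$, whose image in $Y_0(p)=\Gamma_0(p)\backslash\HH$ is by definition the closed geodesic $\gamma_{\Ocal_i}$. Projecting, the set-theoretic image of the displayed map is already $\gamma_{\Ocal_i}$, and taking the product over $i=1,2$ shows that the image of \eqref{localemb} is $\gamma_{\Ocal_1}\times\gamma_{\Ocal_2}$.

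To see in addition that the quotient on the source is the correct one — so that the displayed map is the standard parametrization of $\gamma_{\Ocal_i}$ and not a multiple cover — I would observe that, under the identification $t\leftrightarrow B_i(ti)$, the generator $\epsilon_{L_i}$ of the acting group acts exactly as $\phi_i(\epsilon_{L_i})\in\Gamma_0(p)$ does on its axis, and that optimality of the embedding, $\phi_i(L_i^\times)\cap\Gamma_0(p)=\Ocal_{L_i}^\times$, forces $\phi_i(\epsilon_{L_i})$ to generate the stabilizer of $\widetilde{\gamma}_{\Ocal_i}$ in $\Gamma_0(p)$ modulo $\pm\id_2$. This last matching point is the only place where some care is needed: one must check that the subgroup $\epsilon_{L_i}^{\Z}$ corresponds precisely to the full stabilizer of the geodesic downstairs, neither coarser nor finer, which is exactly what optimality of $\phi_i$ and the choice of $\epsilon_{L_i}$ as the fundamental norm-one unit provide; everything upstairs in $\HH$ is routine.
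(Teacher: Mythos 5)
Your argument is correct and is essentially the paper's own (the paper compresses it to the single observation that as $t_1,t_2$ range over $\R_{>0}$ the points $(t_1i,t_2i)$ trace out $\gamma_\8\times\gamma_\8$, which $(B_1,B_2)$ then carries to the stated geodesics via the Möbius computation $B_i(0)=\tfrac{-r_i-\sqrt{D_i}}{2N_i}$, $B_i(\infty)=\tfrac{-r_i+\sqrt{D_i}}{2N_i}$). Your additional verification that $\epsilon_{L_i}^{\Z}$ matches the full stabilizer of the axis via optimality of $\phi_i$ goes slightly beyond what the paper records, but it is a correct and harmless refinement of the same approach.
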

As in \cite{rbrforum}, we use that $\langle C_T,C_n(\varphi)\rangle =\langle \gamma_{\Ocal_1}, T_n \gamma_{\Ocal_2} \rangle $ to deduce that 
\begin{align}
    \sum_{n=1}^\8 \langle \gamma_{\Ocal_1}, T_n \gamma_{\Ocal_2} \rangle q^n
\end{align}
is a diagonal restriction of a Hilbert modular form for a subgroup of $\SL_2(\Q(\sqrt{D_1D_2}))$.

\begin{rmk}\begin{enumerate}
\item The space $(\epsilon_{L_1}^\Z \times \epsilon_{L_2}^\Z) \backslash \R_{>0}^2$ on the left handside of \eqref{localemb} is just one of the connected components of the adelic space $Y_T$. The geodesic $\gamma_{\Ocal_i}$ is the geodesic attached to the identity in the narrow class group $\Ccal_{D_i}^+$. In general, the cycle $C_\chi$ should be a linear combinations of product of geodesics $\gamma_{I} \times \gamma'_{I}$ attached to a class in the class group of the torus $E^1=\Q(\sqrt{D_1},\sqrt{D_2})^1$, and weighted by $\chi$. However, this class group is not the product of the narrow class groups $\Ccal_{D_i}^+$.

\item One could also consider the case where $E$ is a quartic field that is not a biquadratic field. For example if $E$ has two real and one complex place, then the quadratic space is of signature $(3,1)$ and the space $Y$ is a Bianchi modular surface attached to a quadratic field $K$. The image of $E^1_\8$ is a geodesic that should come from a quadratic extension of $K$. It would be interesting to compute explicitely this extension, associated to the initial extension $E/F$.
\item A similar generating series of intersection numbers on a compact { Shimura curve} have been considered by Rickards \cite{rickards}. It should also follow from the seesaw argument that it is the diagonal restriction of a Hilbert modular form.
\end{enumerate}
\end{rmk}

\section{Spans of diagonal restrictions and toric cycles}\label{spans} Let $(V,Q)$ be a rational quadratic space of even dimension and signature $(p,q)$ with $p\geq q >0$. Let $\varphi=\id_{\widehat{L}}$ be the characteristic function of a lattice $\widehat{L} \subset V_{\A_\fin}$, such that $L=\widehat{L} \cap V$ is even and unimodular. 
By Poincaré duality, the pairing
\begin{align} \label{pairing1}
    \langle - , -  \rangle_Y \colon H_q(Y,\C) \times H^q(Y,\C) \longrightarrow \C
\end{align}
is non-degenerate. For a subspace ${ U} \subset H^q(Y,\C)$ let ${ U}^\perp \subset H_q(Y,\C)$ denote the orthogonal complement with respect to the pairing. If we suppose that $q$ is odd, then the Kudla-Millson lift is in fact a lift
\begin{align}
    \Theta=\Theta^V_\varphi \colon H_q(Y,\C) \longrightarrow S_{d}(\SL_2(\Z))
\end{align}
into the space of cusp forms of weight $d=\frac{p+q}{2}$; see \cite[Theorem.~2]{KMIHES}.
On the other hand, the Kudla-Millson lift has an adjoint
\begin{align}
    \overline{\Theta} \colon S_{d}(\SL_2(\Z)) \longrightarrow H^q(Y,\C)
\end{align}
defined by
\begin{align}
    \overline{\Theta}(f)=\int_{\SL_2(\Z) \backslash \HH}\overline{\Theta_\varphi(\tau)} f(\tau)\frac{dxdy}{y^{2-d}}.
\end{align}
It satisfies 
\begin{align}
    \langle \Theta(C),f \rangle_{\operatorname{pet}}=\langle C,\overline{\Theta}(f) \rangle_Y
\end{align}
where the pairing on the left 
\begin{align}
    \langle -,-\rangle_{\operatorname{pet}} \colon S_{d}(\SL_2(\Z)) \times S_{d}(\SL_2(\Z)) \longrightarrow \C
\end{align}
is the Peterson inner product.
\begin{prop} Let $V$ be a quadratic space of dimension $p+q>4$, where $p \geq q>0$ and $q$ is odd. Then the lift $\overline{\Theta}$ is injective and the lift $\Theta$ is surjective.    
\end{prop}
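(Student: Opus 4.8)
The plan is to deduce the proposition in two steps: injectivity of $\overline{\Theta}$, which is essentially a citation, and surjectivity of $\Theta$, which is then a purely formal consequence of the first step together with the adjunction already recorded.

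For the injectivity of $\overline{\Theta}$, I would invoke the theorem of Bruinier--Funke \cite{bfinj} on the injectivity of the adjoint Kudla--Millson lift. The standing hypotheses here --- $L$ unimodular, $p \geq q > 0$, $q$ odd and $p+q>4$ --- are precisely those under which that result applies: oddness of $q$ guarantees, by \cite[Theorem~2]{KMIHES}, that $\Theta_\varphi(\tau)$ is cuspidal, so that $\Theta$ genuinely lands in $S_d(\SL_2(\Z))$ and the integral defining $\overline{\Theta}$ converges and produces a class in $H^q(Y,\C)$; and $p+q>4$ is the dimension bound in \emph{loc.\ cit.} The only point requiring attention is to match the adelic, unimodular-lattice formulation used in this paper (with scalar-valued target $S_d(\SL_2(\Z))$) with the setup of \cite{bfinj}; this is routine dictionary-chasing and involves no new idea.

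Granting that $\overline{\Theta}$ is injective, surjectivity of $\Theta$ follows from a dimension count. Put $W \coloneqq \Theta\big(H_q(Y,\C)\big) \subseteq S_d(\SL_2(\Z))$ and let $W^\perp$ be its orthogonal complement for the Petersson product. Using the adjunction $\langle \Theta(C),f \rangle_{\operatorname{pet}} = \langle C,\overline{\Theta}(f) \rangle_Y$ recorded above, together with the non-degeneracy of $\langle -,-\rangle_Y$, one obtains for $f \in S_d(\SL_2(\Z))$:
\begin{align*}
 f \in W^\perp &\Longleftrightarrow \langle \Theta(C),f \rangle_{\operatorname{pet}} = 0 \ \text{for all } C \in H_q(Y,\C) \\
 &\Longleftrightarrow \langle C,\overline{\Theta}(f) \rangle_Y = 0 \ \text{for all } C \in H_q(Y,\C) \\
 &\Longleftrightarrow \overline{\Theta}(f) = 0 .
\end{align*}
Hence $W^\perp = \ker \overline{\Theta}$, which is $0$ by the first step. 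Since the Petersson product is a non-degenerate pairing on the finite-dimensional space $S_d(\SL_2(\Z))$, having $W^\perp = 0$ forces $W = S_d(\SL_2(\Z))$, that is, $\Theta$ is surjective.

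I expect the main (indeed the only) obstacle to be the first step: injectivity of $\overline{\Theta}$ is a genuine analytic input rather than a formal fact, and the hypothesis $p+q>4$ cannot be removed by these methods. The second step is pure linear algebra once one has the adjunction relation and the non-degeneracy of the two pairings involved; in particular it uses nothing about the special cycles $C_n(\varphi)$ or the geometry of $Y$ beyond the perfect duality between $H_q(Y,\C)$ and $H^q(Y,\C)$ already noted.
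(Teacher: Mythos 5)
Your proposal is correct and follows essentially the same route as the paper: injectivity of $\overline{\Theta}$ is quoted from Bruinier--Funke (Corollary 1.2 of \cite{bfinj}, applicable since $p+q>4$ and $L$ is even unimodular), and surjectivity of $\Theta$ is deduced formally from the adjunction $\langle \Theta(C),f\rangle_{\operatorname{pet}}=\langle C,\overline{\Theta}(f)\rangle_Y$ together with the non-degeneracy of both pairings. The paper phrases the second step contrapositively (a nonzero $f$ orthogonal to $\im(\Theta)$ would satisfy $\overline{\Theta}(f)=0$, hence $f=0$), which is the same argument as your identity $\im(\Theta)^{\perp}=\ker\overline{\Theta}$.
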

\begin{proof}
    We have $p+q > 4=\max(4,3+r)$ where $r$ is the Witt index. Moreover, we assumed that $L$ is an even unimodular lattice. Hence, the injectivity of $\overline{\Theta}$ is the content of Corollary 1.2 of \cite{bfinj}. Suppose that $f \in S_{d}(\SL_2(\Z))$ is a nonzero cusp form that is not contained in the image of $\Theta$. Without loss of generality we can assume that $f$ is orthogonal to $\im(\Theta)$ with respect to the Peterson inner product, otherwise we replace $f$ by $f-\proj(f)$ where $\proj(f)$ is the orthogonal projection of $f$ onto $\im(\Theta)$ with respect to the Peterson inner product. Hence, for every $C \in H_q(Y,\C)$ we have
    \begin{align}
        0=\langle \Theta(C),f \rangle_{\operatorname{pet}} = \langle C,\overline{\Theta}(f) \rangle_Y.
    \end{align}
    Since the pairing is non-degenerate, this implies that $\overline{\Theta}(f)=0$. By the injectivity of $\overline{\Theta}$ it follows that $f=0$, and that $\Theta$ is surjective. Note that we have $\im(\Theta)=\ker(\overline{\Theta})^\perp$ and $\ker(\Theta)=\im(\overline{\Theta})^\perp$.
\end{proof}

Let $S$ be the set of characters $\chi \colon \Tbf(\Q)\backslash \Tbf(\A) \longrightarrow \C^\times$ with the same assumptions as in the rest of the paper. We define twe following two subspaces. First let  
\begin{align}
    H_T \coloneqq \Span \left \{ \left . C_\chi \ \right \vert  \chi \in S\right \} \subset H_q(Y,\C)
\end{align}
be the homology spanned by the cycles $C_\chi$ for $\chi \in S$. For every cycle $C_\chi$, the lift $\Theta(C_\chi)$ is the diagonal restriction of a product of Hilbert modular forms for $\SL_2(F)$. Let
\begin{align}
    S_T \coloneqq \Span \left \{ \left . \Theta(C_\chi) \ \right \vert  \chi \in S\right \} \subset S_{d}(\SL_2(\Z))
\end{align}
its span.
Let $H_{\cycle} \subset H_{pq-q}(Y,\partial Y,\C)$ be the homology spanned by the special cycles $C_n(\varphi)$. 
Let $H_{\cycle}^\perp \subset H_q(Y,\C)$ be the span of the cycles $C$ that satisfy $\langle C,C_n(\varphi) \rangle=0$ for every $C_n(\varphi)$. 

\begin{cor} Let $V$ be a quadratic space of dimension $p+q>4$, where $p \geq q>0$ and $q$ is odd. We have the following equality
\begin{align}
    \dim \left ( S_{d}(\SL_2(\Z)) \right )-\dim (S_T) = \dim \left ( H_q(Y,\C) \right ) -\dim \left ( \Span \{ H_{\cycle}^\perp , H_T \} \right ).
\end{align}
\end{cor}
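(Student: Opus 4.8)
The plan is to deduce the identity from rank--nullity, applied first to the Kudla--Millson lift $\Theta$ itself and then to its restriction to the subspace $U \coloneqq \Span\{H_{\cycle}^\perp, H_T\} \subset H_q(Y,\C)$. The pivotal point is to identify the kernel of $\Theta$ with $H_{\cycle}^\perp$: since $q$ is odd, $\Theta(C) = \Theta_\varphi(\tau,C)$ is a cusp form for every $C$, and by the generating series expansion \eqref{genseries} its Fourier coefficients are the intersection numbers $\langle C, C_n(\varphi)\rangle$, the constant term $c^0_\varphi(C)$ vanishing by cuspidality. Hence $\Theta(C) = 0$ precisely when $\langle C, C_n(\varphi)\rangle = 0$ for all $n \in \NN_{>0}$, which is exactly the defining condition of $H_{\cycle}^\perp$; that is, $\ker(\Theta) = H_{\cycle}^\perp$.

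Granting this, I would argue as follows. The previous Proposition gives that $\Theta$ is surjective onto $S_d(\SL_2(\Z))$, so rank--nullity yields
\begin{align} \label{rneq1}
  \dim H_q(Y,\C) = \dim S_d(\SL_2(\Z)) + \dim H_{\cycle}^\perp.
\end{align}
Now restrict $\Theta$ to $U$. Its image is $\Theta(H_{\cycle}^\perp) + \Theta(H_T) = S_T$, using $\Theta(H_{\cycle}^\perp) = 0$ and $\Theta(H_T) = \Span\{\Theta(C_\chi) \mid \chi \in S\} = S_T$; its kernel is $U \cap \ker(\Theta) = U \cap H_{\cycle}^\perp = H_{\cycle}^\perp$, because $H_{\cycle}^\perp$ is one of the two subspaces spanning $U$. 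A second application of rank--nullity gives
\begin{align} \label{rneq2}
  \dim U = \dim S_T + \dim H_{\cycle}^\perp.
\end{align}
Subtracting \eqref{rneq2} from \eqref{rneq1} cancels the $\dim H_{\cycle}^\perp$ terms and produces exactly the asserted equality.

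I do not expect a serious obstacle beyond the bookkeeping above; the only external inputs are the surjectivity of $\Theta$ --- which in turn rests on the Bruinier--Funke injectivity of the adjoint $\overline{\Theta}$, valid because $p+q > 4$ and $L$ is even unimodular, together with the non-degeneracy of the Poincar\'e pairing \eqref{pairing1} --- and the identification $\ker(\Theta) = H_{\cycle}^\perp$. The one place to be careful is the latter: one must make sure that for $q$ odd the constant term of $\Theta_\varphi(\tau,C)$ genuinely vanishes for \emph{every} cycle $C$ (this is the cuspidality theorem of Kudla--Millson cited before the Proposition), so that no additional constant-term relation is hidden in $\ker(\Theta)$ beyond the vanishing of the intersection pairings against all $C_n(\varphi)$.
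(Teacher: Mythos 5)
Your proof is correct and follows essentially the same route as the paper: surjectivity of $\Theta$ (via Bruinier--Funke injectivity of $\overline{\Theta}$) plus rank--nullity applied to $\Theta$ and to its restriction to $\Span\{H_{\cycle}^\perp, H_T\}$, all hinging on the identification $\ker(\Theta)=H_{\cycle}^\perp$. The only (harmless) divergence is that you get $\ker(\Theta)=H_{\cycle}^\perp$ directly from the vanishing of the $q$-expansion of the cusp form $\Theta(C)$, whereas the paper routes the reverse inclusion through $\ker(\Theta)=\im(\overline{\Theta})^\perp$ and a Poincar\'e-series computation giving $\langle C,\overline{\Theta}(P_n)\rangle=\langle C,C_n(\varphi)\rangle$; your shortcut is, if anything, more elementary.
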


\begin{proof} Since $\Theta$ is surjective we have an isomorphism
\begin{align} \label{516}
    \Theta \colon H_q(Y,\C) / \ker (\Theta) \longrightarrow S_{d}(\SL_2(\Z)).
\end{align}
Since $S_T$ is the image of $H_T$, the isomorphism restricts to an isomorphism
\begin{align} \label{517}
    \Theta \colon H_T/\ker(\Theta) \cap H_T \longrightarrow S_T.
\end{align}
We deduce from \eqref{516} that
\begin{align} \label{519}
    \dim \left ( S_{d}(\SL_2(\Z)) \right ) = \dim \left ( H_q(Y,\C) \right ) - \dim (\ker (\Theta))
\end{align}
and from \eqref{517} that
\begin{align} \label{518}
    \dim \left ( S_T \right ) & = \dim \left ( H_T \right ) - \dim (\ker (\Theta) \cap H_T) \nonumber \\
   & =  \dim \left ( \Span \left \{ \ker (\Theta), H_T \right \} \right ) - \dim (\ker (\Theta)).
\end{align} The result follows by taking the difference of \eqref{518} and \eqref{519} and using that $\ker(\Theta)=\im(\overline{\Theta})^\perp=H_{\cycle}^\perp$.
The last equality is due to Kudla and Millson \cite[Theorem.~4.2]{kmcjm}. We recall the proof. First, if $C \in H_{\cycle}^\perp$, then $C \in \ker(\Theta)$ since the Fourier coefficients of $\Theta(C)$ are $\langle C,C_n(\varphi) \rangle=0$. Hence, we have $H_{\cycle}^\perp \subset \ker(\Theta)$.
On the other hand, consider the $n$-th Poincaré series of weight $d$ defined
\begin{align}
 P_n(\tau)=  c\sum_{\gamma \in \Gamma / \Gamma_{\8}} \frac{e^{2i\pi n \gamma \tau}}{j(\gamma,\tau)^{d}}.
\end{align}
The series converges when $p+q>4$ and is a cusp form. The constant $c$ is chosen such that $\langle f,P_n \rangle_{\operatorname{pet}}=a_n(f)$ is the $n$-th Fourier coefficients of $f$. Now suppose that $C$ is in $\im(\overline{\Theta})^\perp$. In particular, for every $n>0$ we have
\begin{align}
    0=\langle C,\overline{\Theta}(P_n) \rangle = \langle \Theta(C),P_n \rangle_{\operatorname{pet}}= \langle C,C_n(\varphi) \rangle.
\end{align}
Hence, we have $\im(\overline{\Theta})^\perp \subset H_{\cycle}^\perp$ and the equality $\ker(\Theta)=\im(\overline{\Theta})^\perp=H_{\cycle}^\perp$ follows from $\ker(\Theta)=\im(\overline{\Theta})^\perp$.
\end{proof}
\printbibliography

@article{Kudla1981,
author = {Kudla, Stephen S.},
journal = {Mathematische Annalen},
pages = {517-534},
title = {Holomorphic Siegel Modular Forms Associated to SO(n, 1).},
volume = {256},
year = {1981},
}

@article{hecke,
	author = {Hecke, E. },
	journal = {Mathematische Annalen},
	number = {1},
	pages = {210--242},
	title = {Zur Theorie der elliptischen Modulfunktionen},
	volume = {97},
	year = {1927}}

@misc{kz23,
      title={The Kudla-Millson lift of Siegel cusp forms}, 
      author={Paul Kiefer and Riccardo Zuffetti},
      year={2023},
      eprint={2307.15809},
      archivePrefix={arXiv},
      primaryClass={math.NT},
      url={https://arxiv.org/abs/2307.15809}, 
}

@misc{mz23,
      title={Injectivity of the genus 1 Kudla-Millson lift on locally symmetric spaces}, 
      author={Ingmar Metzler and Riccardo Zuffetti},
      year={2023},
      eprint={2312.00572},
      archivePrefix={arXiv},
      primaryClass={math.NT},
      url={https://arxiv.org/abs/2312.00572}, 
}

@article{zuffetti,
	author = {Zuffetti, Riccardo},
	journal = {Mathematische Zeitschrift},
	number = {10},
	title = {Unfolding and injectivity of the Kudla--Millson lift of genus 1},
	url = {https://doi.org/10.1007/s00209-024-03479-8},
	volume = {307},
	year = {2024}}

@misc{dgl,
      title={Rigid meromorphic cocycles for orthogonal groups}, 
      author={Henri Darmon and Lennart Gehrmann and Michael Lipnowski},
      year={2023},
      eprint={2308.14433},
      archivePrefix={arXiv},
      primaryClass={math.NT}
}

@misc{bogoli,
      title={Span of Restriction of Hilbert Theta Functions}, 
      author={Gabriele Bogo and Yingkun Li},
      year={2022},
      eprint={2207.10922},
      archivePrefix={arXiv},
      primaryClass={math.NT}
}

@article{lieis,
	Author = {Li, Yingkun},
	Journal = {Mathematische Annalen},
	Number = {1},
	Pages = {317--338},
	Title = {Restriction of Coherent Hilbert Eisenstein series},
	Volume = {368},
	Year = {2017}}

@article {keapit,
    AUTHOR = {Keaton, Rodney and Pitale, Ameya},
     TITLE = {Restrictions of {E}isenstein series and {R}ankin-{S}elberg
              convolution},
   JOURNAL = {Doc. Math.},
  FJOURNAL = {Documenta Mathematica},
    VOLUME = {24},
      YEAR = {2019},
     PAGES = {1--45},
      ISSN = {1431-0635},
   MRCLASS = {11F41 (11F67)},
  MRNUMBER = {3935491},
MRREVIEWER = {Hidenori Katsurada},
}

@article {yang,
    AUTHOR = {Yang, Tonghai},
     TITLE = {C{M} number fields and modular forms},
   JOURNAL = {Pure Appl. Math. Q.},
  FJOURNAL = {Pure and Applied Mathematics Quarterly},
    VOLUME = {1},
      YEAR = {2005},
    NUMBER = {2, Special Issue: In memory of Armand Borel. Part 1},
     PAGES = {305--340},
      ISSN = {1558-8599},
   MRCLASS = {11F41 (11R80)},
  MRNUMBER = {2194727},
MRREVIEWER = {Jens Funke},
       DOI = {10.4310/PAMQ.2005.v1.n2.a5},
       URL = {https://doi.org/10.4310/PAMQ.2005.v1.n2.a5},
}

@article{rbrkmmq, title={The Kudla–Millson form via the Mathai–Quillen formalism}, DOI={10.4153/S0008414X23000573}, journal={Canadian Journal of Mathematics}, publisher={Canadian Mathematical Society}, author={Branchereau, Romain}, year={2023}, pages={1–26}}

@inbook{kmcjm, title={Tubes, Cohomology with Growth Conditions and an Application to the Theta Correspondence}, volume={40}, DOI={10.4153/CJM-1988-001-4}, number={1}, journal={Canadian Journal of Mathematics}, publisher={Cambridge University Press}, author={Kudla, Stephen S. and Millson, John J.}, year={1988}, pages={1–37}}

@inbook{bfinj, place={Cambridge}, series={London Mathematical Society Lecture Note Series}, title={On the Injectivity of the Kudla-Millson Lift and Surjectivity of the Borcherds Lift}, DOI={10.1017/CBO9780511730054.004}, booktitle={Moonshine - The First Quarter Century and Beyond: Proceedings of a Workshop on the Moonshine Conjectures and Vertex Algebras}, publisher={Cambridge University Press}, author={Bruinier, Jan Hendrik and Funke, Jens}, editor={Lepowsky, James and McKay, John and Tuite, Michael P.Editors}, year={2010}, pages={12–39}, collection={London Mathematical Society Lecture Note Series}}

@article{maxtor2,
     author = {Bayer-Fluckiger, Eva},
     title = {Embeddings of maximal tori in orthogonal groups},
     journal = {Annales de l'Institut Fourier},
     pages = {113--125},
     publisher = {Association des Annales de l{\textquoteright}institut Fourier},
     volume = {64},
     number = {1},
     year = {2014},
     doi = {10.5802/aif.2840},
     mrnumber = {3330542},
     zbl = {06387267},
     language = {en},
     url = {http://www.numdam.org/articles/10.5802/aif.2840/}
}

@article{maxtor1,
	author = {Rosali Brusamarello and Pascale Chuard-Koulmann and Jorge Morales},
	journal = {Journal of Algebra},
	number = {1},
	pages = {87-101},
	title = {Orthogonal groups containing a given maximal torus},
	volume = {266},
	year = {2003}}

@article{dhrv,
	author = {Henri Darmon and Michael Harris and Victor Rotger and Akshay Venkatesh},
	doi = {10.1307/mmj/20217221},
	journal = {Michigan Mathematical Journal},
	number = {none},
	pages = {145 -- 207},
	publisher = {University of Michigan, Department of Mathematics},
	title = {{The Derived Hecke Algebra for Dihedral Weight One Forms}},
	url = {https://doi.org/10.1307/mmj/20217221},
	volume = {72},
	year = {2022},
	bdsk-url-1 = {https://doi.org/10.1307/mmj/20217221}}

@book{pr,
	Author = {Platonov, Vladimir and Rapinchuk, Andrei},
	Editor = {Academic Press, Inc},
	Publisher = {Elsevier Science},
	Title = {Algebraic Groups and Number Theory},
	Year = {1994}}

@article{rbrforum,
	Author = {Branchereau, Romain},
	Month = {2023-08-25},
Journal = {Forum Mathematicum},
doi= {doi:10.1515/forum-2022-0344},
	Title = {Diagonal restriction of Eisenstein series and Kudla--Millson theta lift},
	Year = {2023}}

@article{DPV,
	Abstract = {We compute the diagonal restriction of the first derivative with respect to the weight of a p-adic family of Hilbert modular Eisenstein series attached to a general (odd) character of the narrow class group of a real quadratic field, and express the Fourier coefficients of its ordinary projection in terms of the values of a distinguished rigid analytic cocycle in the sense of Darmon and Vonk (Duke Math J, to appear, 2020) at appropriate real quadratic points of Drinfeld's p-adic upper half-plane. This can be viewed as the p-adic counterpart of a seminal calculation of Gross and Zagier (J Reine Angew Math 355:191--220, 1985, {\S}7) which arose in their ``analytic proof''of the factorisation of differences of singular moduli, and whose inspiration can be traced to Siegel's proof of the rationality of the values at negative integers of the Dedekind zeta function of a totally real field. Our main identity enriches the dictionary between the classical theory of complex multiplication and its extension to real quadratic fields based on RM values of rigid meromorphic cocycles, and leads to an expression for the p-adic logarithms of Gross--Stark units and Stark--Heegner points in terms of the first derivatives of certain twisted Rankin triple product p-adic L-functions.},
	Author = {Darmon, Henri and Pozzi, Alice and Vonk, Jan},
	Da = {2021/02/01},
	Date-Added = {2021-07-23 19:11:06 +0200},
	Date-Modified = {2021-07-23 19:11:06 +0200},
	Doi = {10.1007/s00208-020-02086-2},
	Id = {Darmon2021},
	Isbn = {1432-1807},
	Journal = {Mathematische Annalen},
	Number = {1},
	Pages = {503--548},
	Title = {Diagonal restrictions of p-adic Eisenstein families},
	Ty = {JOUR},
	Url = {https://doi.org/10.1007/s00208-020-02086-2},
	Volume = {379},
	Year = {2021},
	Bdsk-Url-1 = {https://doi.org/10.1007/s00208-020-02086-2}}

@article{KMIHES,
	Author = {Kudla, Stephen S. and Millson, John J.},
	Journal = {Publications Math\'ematiques de l'IH\'ES},
	Language = {en},
	Mrnumber = {92e:11035},
	Pages = {121--172},
	Publisher = {Institut des Hautes \'Etudes Scientifiques},
	Title = {Intersection numbers of cycles on locally symmetric spaces and Fourier coefficients of holomorphic modular forms in several complex variables},
	Url = {http://www.numdam.org/item/PMIHES_1990__71__121_0/},
	Volume = {71},
	Year = {1990},
	Zbl = {0722.11026},
	Bdsk-Url-1 = {http://www.numdam.org/item/PMIHES_1990__71__121_0/}}

@article{km3,
	Author = {Kudla, Stephen S. and Millson, John J.},
	Da = {1987/06/01},
	Date-Added = {2020-11-26 13:05:01 +0000},
	Date-Modified = {2020-11-26 13:05:01 +0000},
	Doi = {10.1007/BF01457364},
	Id = {Kudla1987},
	Isbn = {1432-1807},
	Journal = {Mathematische Annalen},
	Number = {2},
	Pages = {267--314},
	Title = {The theta correspondence and harmonic forms. II},
	Ty = {JOUR},
	Url = {https://doi.org/10.1007/BF01457364},
	Volume = {277},
	Year = {1987},
	Bdsk-Url-1 = {https://doi.org/10.1007/BF01457364}}

@article{km2,
	Author = {Kudla, Stephen S. and Millson, John J.},
	Da = {1986/09/01},
	Date-Added = {2020-11-26 13:03:49 +0000},
	Date-Modified = {2020-11-26 13:03:49 +0000},
	Doi = {10.1007/BF01457221},
	Id = {Kudla1986},
	Isbn = {1432-1807},
	Journal = {Mathematische Annalen},
	Number = {3},
	Pages = {353--378},
	Title = {The theta correspondence and harmonic forms. I},
	Ty = {JOUR},
	Url = {https://doi.org/10.1007/BF01457221},
	Volume = {274},
	Year = {1986},
	Bdsk-Url-1 = {https://doi.org/10.1007/BF01457221}}

@article{km,
	Abstract = {Using the theta correspondence we construct liftings from the cohomology with compact supports of locally symmetric spaces associated to O(p, q) (resp. U(p, q)) of degreenq (resp. Hodge typenq, nq) to the space of classical holomorphic Siegel modular forms of weight (p +q)/2 and genusn (resp. holomorphic hermitian modular forms of weightp +q and genusn). It is important to note that the cohomology with compact supports contains the cuspidal harmonic forms by Borel {$[$}3{$]$}. We can express the Fourier coefficients of the lift of ηin terms of periods of ηover certain totally geodesic cycles---generalizing Shintani's solution {$[$}21{$]$} of a conjecture of Shimura. We then choose ηto be the Poincar{\'e}dual of a (finite) cycle and obtain a collection of formulas analogous to those of Hirzebruch-Zagier {$[$}8{$]$}. In our previous work we constructed the above lifting but we were unable to prove that it took values in theholomorphic forms. Moreover, we were unable to compute the indefinite Fourier coefficients of a lifted class. By Koecher's Theorem we may now conclude that all such coefficients are zero.},
	Author = {Kudla, Stephen S. and Millson, John J.},
	Da = {1990/12/01},
	Date-Added = {2020-11-11 19:16:28 +0000},
	Date-Modified = {2020-11-11 19:16:28 +0000},
	Doi = {10.1007/BF02699880},
	Id = {Kudla1990},
	Isbn = {1618-1913},
	Journal = {Publications Math{\'e}matiques de l'Institut des Hautes {\'E}tudes Scientifiques},
	Number = {1},
	Pages = {121--172},
	Title = {Intersection numbers of cycles on locally symmetric spaces and fourier coefficients of holomorphic modular forms in several complex variables},
	Ty = {JOUR},
	Url = {https://doi.org/10.1007/BF02699880},
	Volume = {71},
	Year = {1990},
	Bdsk-Url-1 = {https://doi.org/10.1007/BF02699880}}

@article {rickards,
    AUTHOR = {Rickards, James},
     TITLE = {Hecke operators acting on optimal embeddings in indefinite
              quaternion algebras},
   JOURNAL = {Acta Arith.},
  FJOURNAL = {Acta Arithmetica},
    VOLUME = {204},
      YEAR = {2022},
    NUMBER = {4},
     PAGES = {347--367},
      ISSN = {0065-1036},
   MRCLASS = {11R52 (11F11 11Y40 16H05)},
  MRNUMBER = {4474772},
MRREVIEWER = {Juliusz Brzezi\'{n}ski},
       DOI = {10.4064/aa210723-11-7},
       URL = {https://doi.org/10.4064/aa210723-11-7},
}

@article{bcg,
	Abstract = {These notes were written to be distributed to the audience of the first author's Takagi Lectures delivered June 23, 2018. These are based on a work-in-progress that is part of a collaborative project that also involves Akshay Venkatesh.},
	Author = {Bergeron, Nicolas and Charollois, Pierre and Garcia, Luis E.},
	Da = {2020/06/01},
	Date-Added = {2020-11-11 17:31:45 +0000},
	Date-Modified = {2020-11-11 17:32:30 +0000},
	Doi = {10.1007/s11537-019-1822-6},
	Id = {Bergeron2020},
	Isbn = {1861-3624},
	Journal = {Japanese Journal of Mathematics},
	Number = {2},
	Pages = {311--379},
	Title = {Transgressions of the Euler class and Eisenstein cohomology of GLN(Z)},
	Ty = {JOUR},
	Url = {https://doi.org/10.1007/s11537-019-1822-6},
	Volume = {15},
	Year = {2020},
	Bdsk-Url-1 = {https://doi.org/10.1007/s11537-019-1822-6}}
\end{document}